\theoremstyle{plain}
\newtheorem{theorem}{Theorem}[section]
\newtheorem{thm}[theorem]{Theorem}
\newtheorem{cor}[theorem]{Corollary}
\newtheorem{prop}[theorem]{Proposition}
\newtheorem{lem}[theorem]{Lemma}
\newtheorem*{theorem*}{Theorem}
\newtheorem*{conjecture*}{Conjecture}
\theoremstyle{definition}
\newtheorem{rem}[theorem]{Remark}
\newtheorem{defn}[theorem]{Definition}
\newtheorem{eg}[theorem]{Example}
\newtheorem{obs}[theorem]{Observation}
\newcommand{\bC}{{\mathbb{C}}}
\newcommand{\bN}{{\mathbb{N}}}
\newcommand{\bQ}{{\mathbb{Q}}}
\newcommand{\bR}{{\mathbb{R}}}
\newcommand{\bT}{{\mathbb{T}}}
\newcommand{\bZ}{{\mathbb{Z}}}
  \newcommand{\A}{{\mathcal{A}}}
  \newcommand{\B}{{\mathcal{B}}}
  \newcommand{\E}{{\mathcal{E}}}
  \newcommand{\F}{{\mathcal{F}}}
  \newcommand{\G}{{\mathcal{G}}}
\newcommand{\M}{{\mathcal{M}}}
\renewcommand{\O}{{\mathcal{O}}}
\renewcommand{\P}{{\mathcal{P}}}
  \newcommand{\Q}{{\mathcal{Q}}}
  \newcommand{\T}{{\mathcal{T}}}
  \newcommand{\Z}{{\mathcal{Z}}}
\newcommand{\fs}{{\mathfrak{s}}}
\newcommand{\ft}{{\mathfrak{t}}}
\newcommand{\Be}{{\mathbf{e}}}
\newcommand{\Bf}{{\mathbf{f}}}
\newcommand{\Bm}{{\mathbf{m}}}
\newcommand{\Bn}{{\mathbf{n}}}
\newcommand{\Bt}{{\mathbf{t}}}
\newcommand{\Bx}{{\mathbf{x}}}
\newcommand{\By}{{\mathbf{y}}}
\newcommand{\rC}{{\mathrm{C}}}
\newcommand{\upchi}{{\raise.35ex\hbox{\ensuremath{\chi}}}}
\newcommand{\qfor}{\quad\text{for}\quad}
\newcommand{\qforal}{\quad\text{for all}\quad}
\newcommand{\mS}{\mathsf{S}_{\bN, \mathsf{E}_n}}
\newcommand{\Aut}{\operatorname{Aut}}
\newcommand{\BS}{\operatorname{BS}}
\newcommand{\id}{{\operatorname{id}}}
\newcommand{\ran}{\operatorname{Ran}}
\newcommand{\spn}{\operatorname{span}}
\newcommand{\GBS}{\operatorname{GBS}}
\newcommand{\lcm}{\operatorname{lcm}}
\newcommand{\ca}{\mathrm{C}^*}
\newcommand{\Fn}{\mathbb{F}_n^+}
\newcommand{\mt}{\varnothing}
\newcommand{\ol}{\overline}
\newcommand{\SM}{\mathsf{S}_{\bN, \mathsf{E}_n}}
\newcommand{\SE}{\mathsf{E}_n}
\begin{document}
\title[Rank $\mathsf k$ BS Semigroups]{Semigroups of Self-Similar Actions \\ and Higher rank Baumslag-Solitar Semigroups}
\author[R. Valente]{Robert Valente}
\address{Robert Valente, 
Department of Mathematics $\&$ Statistics, University of Windsor, Windsor, ON
N9B 3P4, CANADA}
\email{valenter@uwindsor.ca}
\author[D. Yang]{Dilian Yang}\thanks{}
\address{Dilian Yang,
Department of Mathematics $\&$ Statistics, University of Windsor, Windsor, ON
N9B 3P4, CANADA}
\email{dyang@uwindsor.ca}

\thanks{}
\thanks{R.V.~was partially supported by Queen Elizabeth II Graduate Scholarship in Science and Technology (QEII-GSST),
and D.Y.~was partially supported by an NSERC Discovery Grant.}

\begin{abstract}
In this paper, we initiate the study of higher rank Baumslag-Solitar semigroups and their related C*-algebras. 
We focus on two rather interesting classes -- one is related to products of odometers and the other is related to 
Furstenberg’s $\times p, \times q$ conjecture. 
For the former class, whose C*-algebras are studied 
in  \cite{LY19$_1$},  
we here characterize the factoriality of the associated von Neumann algebras 
and further determine their types; 
for the latter, we obtain their canonical Cartan subalgebras. 
In the rank 1 case, we study a more general setting which encompasses (single-vertex) generalized Baumslag-Solitar semigroups. 
One of our main tools in this paper is from self-similar higher rank graphs and their C*-algebras. 
\end{abstract}

\subjclass[2010]{46L05; 46L10, 20M30}
\keywords{Self-similar action, Baumslag-Solitar semigroup, higher rank Baumslag-Solitar semigroup, Cartan subalgebra, higher rank graph}

\maketitle

\section{Introduction} 

A self-similar higher rank graph $(G, \Lambda)$ is a pair, which consists of a group $G$ and a higher rank graph $\Lambda$ such that $G$ acts on $\Lambda$ from the left 
and $\Lambda$ `acts' on $G$ from the right, where these two actions are compatible in an appropriate way.   
After \cite{EP17, LRRW18, Nek04}, self-similar higher rank graphs and their C*-algebras $\O_{G, \Lambda}$ have been systematically studied in \cite{LY19$_2$, LY21, LY212}. 
In particular, in \cite{LY19$_2$}, when $\Lambda$ is strongly connected, we find a canonical Cartan subalgebra of $\O_{G, \Lambda}$
en route to the study of the KMS states of $\O_{G, \Lambda}$. However, to achieve this, $\Lambda$ is required to be \textsf{locally faithful}. The local faithfulness is a key property to 
obtain the main results in \cite{LY19$_2$}. 
Roughly speaking, it guarantees that one could define a periodicity group in a way very similar to higher rank graphs in \cite{DY091, DY09, HLRS15}. 
It turns out that the local faithfulness condition blocks a lot of interesting examples. 
This provides our starting point of this paper -- to explore non-locally faithful self-similar higher rank graphs and their C*-algebras. During the exploration, we find that a particular 
non-locally faithful class is closely related to Baumslag-Solitar (BS) semigroups. Due to the higher rank feature, we call such self-similar higher rank graphs \textit{higher rank BS semigroups}. 
One extreme case of higher rank BS semigroups is about products of odometers (\cite{LY19$_1$}), while the other extreme 
case is surprisingly related to Furstenberg’s $\times p, \times q$ conjecture.

With semigroups mentioned, there is no surprise that $\Lambda$ is assumed to be single-vertex in this paper. Taking rank 1 (i.e., classical) BS semigroups into consideration, we also consider 
$G=\bZ$ only. 
For single-vertex higher rank graphs, they have been systematically studied in the literature. To name just a few, see, for instance, \cite{DPY08}-\cite{DPY10}, \cite{Yan10, Yan12}. Those graphs seem very special, but 
exhibit a lot of interesting properties. Surprisingly, they are also shown to interact intimately with the Yang-Baxter equation (\cite{Yan16}).  
For BS semigroups, they have been attracting increasing attention in Operator Algebras recently. See \cite{BLRS20, CaHR16, HNSY21, Li21, Spi12} and the references therein. 
Those semigroups provide, on one hand, a class of nice examples for some properties \cite{CaHR16, Li21, Spi12}, and on the other hand, some counter-examples for other properties \cite{BLRS20, HNSY21}. 
Our main purpose in this paper is to mingle single-vertex higher rank graphs and BS semigroups. 

The paper is structured as follows. In Section \ref{S:Pre}, some necessary preliminaries are provided. Although most of them are known, Subsection \ref{SS:semiss} is new, where we introduce a notion 
of semigroups from self-similar actions. Those semigroups are different from self-similar semigroups/monoids in \cite{BGN02} and the references therein (Remark \ref{rem:semiss}). 
Since the rank 1 case is studied in a more general setting, we focus on this case in Section \ref{S:R1}. Even in this case, it includes generalized Baumslag-Solitar (GBS) semigroups, and 
BS semigroups as well, as examples. We study the periodicity of the associated self-similar graph, and obtain a canonical Cartan subalgebra of its C*-algebra 
(Propositions \ref{P:Cartan n=1} and \ref{P:Cartan n>1}). The simplicity of the C*-algebra is characterized in terms of the relation between the number of edges and the restriction map; 
and when it is Kirchberg is also described (Theorem \ref{T:rk1Kir}). We turn to higher rank cases in Section \ref{S:rkBS}. We first propose a notion of higher rank BS semigroups (Definition \ref{D:rkBS}). 
We briefly discuss how higher rank BS semigroups are related to Furstenberg’s $\times p, \times q$ conjecture in Subsection \ref{SS:Fur}. 
We then focus on two extreme classes. The first extreme class is about products of odometers studied in \cite{LY19$_1$}; but here, we investigate the associated von Neumann algebra:
Its factoriality is characterized and its type is also determined (Theorem \ref{T:factor}). The second extreme class seems trivial at first sight, but turns out to be intriguing. We exhibit a canonical Cartan in this case, which is generally a proper subalgebra of the cycline algebra (Theorem \ref{T:F'Cargen}). 
We close with computing the spectrum of the fixed point algebra of its gauge action. 
We  hope that we could push Furstenberg’s $\times p, \times q$ conjecture further in this vein in our future studies.

\subsection*{Notation and Conventions}
Given $1\le n\in \bN$, let $[n]:=\{0, 1, \ldots, n-1\}$. For $1\le \mathsf k\in \bN$, let $\mathds{1}_{\mathsf k}:=(1,\ldots, 1)\in \bN^{\mathsf{k}}$. 

We use the multi-index notation: For $\mathbf{q}=(q_1,\ldots, q_{\mathsf k})$ and $\mathbf{p}=(p_1,\ldots, p_\mathsf{k})$ in $\bZ^{\mathsf k}$ with all $p_i\ne 0$, let 
$\mathbf{p}^{\mathbf q}:=\prod_{i=1}^{\mathsf k} p_i^{q_i}$. 

For convenience, sometimes we also let $\bZ=\langle a\rangle$, which is written multiplicatively. 

As with most literatures in Operator Algebras, all semigroups in this paper are assumed to be monoids, unless otherwise specified.

\section{Preliminaries} 
\label{S:Pre}
\subsection{Single-vertex rank $\mathsf k$ graphs}
\label{SS:single}

A countable small category $\Lambda$ is called a \emph{rank $\mathsf k$ graph} (or \textit{$\mathsf k$-graph}) if there exists a functor $d:\Lambda \to \mathbb{N}^{\mathsf k}$ 
satisfying the following unique factorization property: 
For $\mu\in\Lambda, \Bn, \Bm \in \mathbb{N}^k$ with $d(\mu)=\Bn+\Bm$, there exist unique $\beta\in d^{-1}(\Bn)$ and $\alpha\in d^{-1}(\Bm) $ such that $\mu=\beta\alpha$.
 A functor $f:\Lambda_1 \to \Lambda_2$ is called a \emph{graph morphism} if $d_2 \circ f=d_1$.

Let $\Lambda$ be a $\mathsf{k}$-graph and $\Bn\in \bN^k$. Set $\Lambda^\Bn:=d^{-1}(\Bn)$. For $\mu\in \Lambda$, we write $s(\mu)$ and $r(\mu)$ for the source and range of $\mu$, respectively.  
Then $\Lambda$ is said to be \emph{row-finite} if $\vert v\Lambda^{\Bn}\vert<\infty$ for all $v \in \Lambda^0$ and $\Bn \in \mathbb{N}^{\mathsf k}$;
and \emph{source-free} if $v\Lambda^{\Bn} \neq \mt$ for all $v \in \Lambda^0$ and $\Bn \in \mathbb{N}^{\mathsf k}$. 
For more information about $\mathsf k$-graphs, refer to \cite{KP00}. 
In this paper, all $\mathsf k$-graphs are assumed to be row-finite and source-free. Actually, we focus on a special class of rank $\mathsf k$ graphs -- single-vertex rank $\mathsf k$ graphs.

Single-vertex $\mathsf k$-graphs, at first sight, seem to be a very special class of $\mathsf k$-graphs. It turns out that they are a rather intriguing class to study. They have been systematically studied in the literature, e.g., \cite{DPY08, DY091, DY09, DPY10}. There are close connections with the well-known Yang-Baxter equation \cite{Yan16}.

Let $\{\epsilon_1,\ldots, \epsilon_{\mathsf{k}}\}$ be the standard basis of $\bN^{\mathsf{k}}$, 
and $\Lambda$ be a single-vertex rank $\mathsf{k}$ graph. 
For $1\le i\le \mathsf{k}$, write 
$
\Lambda^{\epsilon_i}:=\{\Bx^i_\fs:\fs\in[n_i]\}, 
$
where $n_i=|\Lambda^{\epsilon_i}|$. It follows from the factorization property of $\Lambda$ that,
for $1\le i<j\le \mathsf{k}$, there is a permutation $\theta_{ij}\in S_{n_i\times n_j}$ 
satisfying the following \textit{$\theta$-commutation relations}
\begin{align*}
 \Bx^i_\fs \Bx^j_\ft = \Bx^j_{\ft'} \Bx^i_{\fs'}
 \quad\text{if}\quad
 \theta_{ij}(\fs,\ft) = (\fs',\ft').
\end{align*}
To emphasize $\theta$-commutation relations involved, this  single-vertex $\mathsf k$-graph $\Lambda$ is denoted as $\Lambda_\theta$ in this paper. So $\Lambda_\theta$ is 
the following (unital) semigroup
\begin{align*}
\Lambda_\theta=\big\langle \Bx_\fs^i: \fs\in [n_i],\, 1\le i\le \mathsf{k};\,  \Bx^i_\fs \Bx^j_\ft = \Bx^j_{\ft'} \Bx^i_{\fs'}
 \text{ whenever }
 \theta_{ij}(\fs,\ft) = (\fs',\ft') \big\rangle^+,
\end{align*}
which is also occasionally written as
\begin{align*}
\Lambda_\theta=\big\langle \Bx_\fs^i:  \fs\in [n_i],\ 1\le i\le \mathsf{k}; \, \theta_{ij}, \, 1\le i<j\le \mathsf{k} \big\rangle^+.
\end{align*}
One should notice that $\Lambda_\theta$ has the cancellation property due to the unique factorization property.
It follows from the $\theta$-commutation relations that every element $w\in \Lambda_\theta$ has the normal form
$
w=\Bx_{u_1}^1\cdots \Bx_{u_{\mathsf{k}}}^{\mathsf{k}}
$
for some $\Bx_{u_i}^i\in\Lambda_\theta^{\epsilon_i\bN}$ ($1\le i\le \mathsf{k}$). Here we use
the multi-index notation: $\Bx^i_{u_i}=\Bx^i_{\fs_1}\cdots \Bx^i_{\fs_n}$ if $u_i=\fs_1\cdots\fs_n$ with all $\fs_i$'s in $[n_i]$.

For $\mathsf{k}=2$, every permutation $\theta\in S_{n_1\times n_2}$ determines a single-vertex rank 2 graph. But for $\mathsf{k}\ge 3$, $\theta=\{\theta_{ij}:1\le i<j\le \mathsf{k}\}$ determines a rank
$\mathsf k$ graph if and only if it satisfies a \textit{cubic condition} (see, e.g., \cite{DY09, FS02} for its definition).
This cubic condition exactly provides interplay between $\mathsf k$-graphs and the Yang-Baxer equation (\cite{Yan16}).

Here are some examples of single-vertex $\mathsf k$-graphs which will be used later. 

\begin{eg}(\textsf{Trivial permutation})  
\label{eg:t}
For $1\le i<j\le \mathsf{k}$, let $\theta_{ij}$ be the trivial permutation: $\theta_{ij}(s,t) = (s,t)$ for all $s\in [n_i]$ and $t\in [n_j]$. 
Then clearly $\Lambda_\theta$ is a $\mathsf{k}$-graph for all $\mathsf{k}\ge 1$, which is written as $\Lambda_\id$. 
\end{eg}

\begin{eg}
\label{eg:d}
(\textsf{Division permutation}) Let $\theta_{ij}$ be defined by $\theta_{ij}(s,t) = (s',t')$, where $s'\in [n_i]$ and $t'\in [n_j]$ are the unique integers such that 
$s + tn_i = t' + s'n_j$. One can check that this determines a $\mathsf{k}$-graph for any $\mathsf{k}\ge 1$ (see, e.g., \cite{LY19$_1$}), denoted as 
$\Lambda_{\mathsf{d}}$. 

In particular, if $n_i=n$ for all $1\le i\le \mathsf{k}$, then $\theta$ coincides with the flip commutation relation: $\theta_{ij}(s,t) = (t,s)$.
\end{eg}

\begin{eg}(\textsf{``Trivial" case}) 
\label{eg:n=1}
Let $n_i=1$ for all $1\le i\le \mathsf{k}$. 
Then $\theta_{ij}$ has to be the trivial commutation relation, which 
is the same as the division commutation relation.
 This is a special case of both Examples \ref{eg:t} and \ref{eg:d}. 
 
Very surprisingly, this case is not trivial at all when it is equipped with self-similar actions! It is extremely interesting and related to Furstenberg’s $\times p, \times q$ conjecture. See Section \ref{SS:Fur} below. 
\end{eg}

\subsection{Self-similar single-vertex $\mathsf k$-graph C*-algebras}
\label{S:sssC*}

To unify the treatments of \cite{Kat08, Kat082} and \cite{Nek04, Nek09}, self-similar graphs and their C*-algebras naturally arise in \cite{EP17} and are well studied there. 
Later, they are generalized to higher rank cases in \cite{LY21}, and are further studied in \cite{LY19$_2$, LY212}.

Since this paper mainly focuses on single-vertex $\mathsf k$-graphs, we adapt the notions of \cite{LY19$_2$, LY21} to our setting and simplify them accordingly.

Let $\Lambda_\theta$ be a single-vertex $\mathsf k$-graph. A bijection $\pi:\Lambda_\theta \to \Lambda_\theta$ is called an \emph{automorphism} of $\Lambda_\theta$ if
$\pi$ preserves the degree map $d$.\footnote{In general, an automorphism on $\Lambda_\theta$ is not necessarily a semigroup automorphism on $\Lambda_\theta$, as a semigroup.} 
Denote by $\Aut(\Lambda_\theta)$ the automorphism group of $\Lambda$.

Let $G$ be a (discrete countable) group. We say that \textit{$G$ acts on $\Lambda_\theta$} if there is a group homomorphism $\varphi$ from $G$ to $\Aut(\Lambda_\theta)$.
For $g\in G$ and $\mu\in\Lambda_\theta$, we often simply write $\varphi_g(\mu)$ as $g\cdot \mu$.

\begin{defn}[{\cite[Definition~3.2]{LY19$_2$}}]
\label{D:ss}
Let $\Lambda_\theta$ be a single-vertex $\mathsf k$-graph, $G$ be a group acting on $\Lambda_\theta$, and $G\times \Lambda_\theta \to G$, $(g,\mu)\mapsto g|_\mu$ be a given map. 
Then we call $(G,\Lambda_\theta)$ \emph{a self-similar $\mathsf k$-graph} if the following properties hold true: 
\begin{enumerate}
\item
$g\cdot (\mu\nu)=(g \cdot \mu)(g \vert_\mu \cdot \nu)$ for all $g \in G,\mu,\nu \in \Lambda_\theta$;

\item
$g \vert_v =g$ for all $g \in G,v \in \Lambda_\theta^0$;

\item
$g \vert_{\mu\nu}=g \vert_\mu \vert_\nu$ for all $g \in G,\mu,\nu \in \Lambda_\theta$; 

\item
$1_G \vert_{\mu}=1_G$ for all $\mu \in \Lambda_\theta$;

\item
$(gh)\vert_\mu=g \vert_{h \cdot \mu} h \vert_\mu$ for all $g,h \in G,\mu \in \Lambda_\theta$.
\end{enumerate}
In this case, we also say that $\Lambda_\theta$ is a \textit{self-similar $\mathsf k$-graph over $G$}, and that \textit{$G$ acts on $\Lambda_\theta$ self-similarly}. 
\end{defn}

\begin{defn}
\label{D:pf}
A self-similar $\mathsf k$-graph $(G,\Lambda_\theta)$ is said to 
be \emph{pseudo-free} if $g \cdot \mu=\mu$ and $g \vert_\mu=1_G $ implies $g=1_G$ for all $g\in G$ and $\mu\in \Lambda_\theta$.
\end{defn}

\begin{defn}[{\cite[Definition~3.8]{LY19$_2$}}]\label{D:O}
Let $(G,\Lambda_\theta)$ be a self-similar $\mathsf k$-graph. The \textit{self-similar $\mathsf k$-graph C*-algebra} $\mathcal{O}_{G,\Lambda_\theta}$
is defined to be the universal unital C*-algebra generated by a family of unitaries $\{u_g\}_{g \in G}$ and
a family of isometries $\{s_\mu: \mu\in \Lambda_\theta\}$ satisfying 
\begin{itemize}
\item[(i)]
$u_{gh}=u_g u_h$ for all $g, \ h \in G$;

\item[(ii)]
$s_\mu s_\nu=s_{\mu\nu}$ for all $\mu,\ \nu\in \Lambda_\theta$;

\item[(iii)] 
$\sum\limits_{\mu\in \Lambda_\theta^\Bn} s_\mu s_\mu^*=I$ for all $\Bn\in \bN^{\mathsf k}$;

\item[(iv)]
$u_g s_\mu=s_{g \cdot \mu} u_{g \vert_\mu}$ for all $g \in G$ and $\mu \in \Lambda_\theta$.
\end{itemize}
\end{defn}

Let us record the following result  (\cite[Propositions~3.12 and 5.10]{LY21}), which will be used later without mentioning.

\begin{prop}
\label{P:genO}
Let $(G,\Lambda_\theta)$ be a self-similar $\mathsf k$-graph. 
Then
\begin{enumerate}
\item
 the linear span of $\{s_\mu u_g s_\nu^*: \mu, \nu \in \Lambda_\theta, g \in G\}$ is a dense $*$-subalgebra of $\mathcal{O}_{G,\Lambda_\theta}$;
\item
The $\mathsf k$-graph C*-algebra $\O_{\Lambda_\theta}$ naturally embeds into $\O_{G,\Lambda_\theta}$;
\item 
 $G$ and $\ca(G)$ embed into $\O_{G, \Lambda_\theta}$, provided that $(G, \Lambda_\theta)$ is pseudo-free and $G$ is amenable. 
\end{enumerate}
\end{prop}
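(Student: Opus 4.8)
The plan is to take the three parts in turn; the first two reduce to bookkeeping with the defining relations plus a gauge-invariant uniqueness argument, whereas the genuine content is concentrated in the third.

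For (1), I would set $\A_0:=\spn\{s_\mu u_g s_\nu^*:\mu,\nu\in\Lambda_\theta,\ g\in G\}$ and show it is a $*$-subalgebra containing every generator; since by Definition \ref{D:O} the algebra $\O_{G,\Lambda_\theta}$ is the closed $*$-algebra generated by $\{u_g\}$ and $\{s_\mu\}$, density of $\A_0$ is then automatic. Self-adjointness is immediate from $(s_\mu u_g s_\nu^*)^*=s_\nu u_{g^{-1}} s_\mu^*$ (using $u_g^*=u_{g^{-1}}$). For closure under multiplication the only obstruction is the middle factor of
\[
(s_\mu u_g s_\nu^*)(s_\alpha u_h s_\beta^*)=s_\mu\, u_g\,(s_\nu^* s_\alpha)\, u_h\, s_\beta^*.
\]
I would first rewrite $s_\nu^* s_\alpha$ using the single-vertex $\mathsf k$-graph relation
\[
s_\nu^* s_\alpha=\sum_{\substack{\nu\sigma=\alpha\tau\\ d(\nu\sigma)=d(\nu)\vee d(\alpha)}} s_\sigma s_\tau^*,
\]
which follows from relations (ii)--(iii) together with the unique factorization property, and then transport the two unitaries to the centre using the covariance relation (iv) in the forms $u_g s_\sigma=s_{g\cdot\sigma} u_{g|_\sigma}$ and its adjoint $s_\tau^* u_h=u_{(h^{-1}|_\tau)^{-1}} s_{h^{-1}\cdot\tau}^*$, collapsing products of isometries via (ii). The result is a finite sum of elements of the form $s_{\mu(g\cdot\sigma)}\, u_{g'}\, s_{\beta(h^{-1}\cdot\tau)}^*\in\A_0$.

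For (2), the universal property of $\O_{\Lambda_\theta}$ (the case $G=\{1_G\}$) produces a canonical unital $*$-homomorphism $\iota:\O_{\Lambda_\theta}\to\O_{G,\Lambda_\theta}$ fixing each $s_\mu$, and the task is injectivity. I would argue through the gauge-invariant uniqueness theorem for $\mathsf k$-graph C*-algebras. By universality $\O_{G,\Lambda_\theta}$ carries a gauge action $\gamma$ of $\bT^{\mathsf k}$ determined by $\gamma_z(s_\mu)=z^{d(\mu)}s_\mu$ and $\gamma_z(u_g)=u_g$, which restricts on $\iota(\O_{\Lambda_\theta})$ to the standard gauge action. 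Since there is a single vertex with projection $I\neq 0$ and each $\iota(s_\mu)$ is a genuine isometry, $\iota$ is nonzero on the vertex projection and intertwines the gauge actions, so gauge-invariant uniqueness upgrades this to injectivity.

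For (3), the assignment $g\mapsto u_g$ and the universal property of the full group C*-algebra give a unital $*$-homomorphism $\pi:\ca(G)\to\O_{G,\Lambda_\theta}$, and injectivity is the real point. Pseudo-freeness guarantees that the $u_g$ are distinct (so $G$ embeds as a group), and the plan is to realize $\O_{G,\Lambda_\theta}$ through a concrete model---a Fock-type regular representation, or equivalently its associated étale groupoid---in which the compression onto the subspace corresponding to the single vertex recovers the left regular representation $\lambda$ of $G$; pseudo-freeness is precisely what makes this representation well defined and identifies that compression with $\lambda$, i.e.\ with $C^*_r(G)$. Amenability of $G$ then supplies $C^*_r(G)=\ca(G)$, so $\lambda$ is faithful on $\ca(G)$ and $\pi$ is injective, carrying $\overline{\spn}\{u_g\}$ isomorphically onto $\ca(G)$. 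I expect this third step to be the main obstacle: unlike (1) and (2) it cannot be settled formally from generators and relations, but requires a faithful concrete model of $\O_{G,\Lambda_\theta}$ where pseudo-freeness rules out any collapse of the $u_g$, together with the amenability hypothesis to pass from the reduced group C*-algebra seen by the model to the full $\ca(G)$.
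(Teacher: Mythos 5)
The paper itself contains no proof of Proposition \ref{P:genO}: it is quoted from \cite[Propositions~3.12 and 5.10]{LY21}, so the only comparison available is with that source. Your parts (1) and (2) are correct and essentially standard. For (1), the reduction of $s_\nu^*s_\alpha$ to $\sum s_\sigma s_\tau^*$ over minimal common extensions together with the two covariance identities $u_gs_\sigma=s_{g\cdot\sigma}u_{g|_\sigma}$ and $s_\tau^*u_h=u_{(h^{-1}|_\tau)^{-1}}s_{h^{-1}\cdot\tau}^*$ is exactly the right computation. For (2), gauge-invariant uniqueness does the job, but note that both (2) and (3) silently require knowing $\O_{G,\Lambda_\theta}\neq 0$ (equivalently, that the vertex projection $I$ survives); this is not a formal consequence of universality and is supplied in \cite{LY21} by a concrete (groupoid) model, so your two ``formal'' parts are not quite free of the model either.

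For (3), your list of ingredients (concrete model, pseudo-freeness to prevent collapse, amenability to pass from reduced to full) is the right one, but the mechanism you propose --- compressing onto the subspace of the single vertex to recover the left regular representation --- does not work as stated. With one vertex the vertex projection \emph{is} the identity, so there is nothing to compress by; and in the natural Fock-type representation on $\ell^2(\Lambda_\theta)$ the unitaries $u_g$ permute each finite set $\Lambda_\theta^{\Bn}$, so $G$ is represented by a direct sum of finite permutation representations, not by $\lambda$ (and the Fock representation violates relation (iii) in any case). The argument that actually closes the gap is groupoid-theoretic: one identifies $\O_{G,\Lambda_\theta}$ with $\ca(\G_{G,\Lambda_\theta})$ for an amenable \'etale groupoid over the compact unit space $\Lambda_\theta^\infty$; pseudo-freeness guarantees that distinct pairs $(g,x)$ give distinct germs, so the transformation groupoid $G\ltimes\Lambda_\theta^\infty$ embeds as an open subgroupoid with full unit space, whence $\rC(\Lambda_\theta^\infty)\rtimes_r G$ embeds into $\ca_r(\G_{G,\Lambda_\theta})=\O_{G,\Lambda_\theta}$; finally $\ca_r(G)$ always embeds unitally into a reduced crossed product by a unital algebra, and amenability of $G$ identifies $\ca_r(G)$ with $\ca(G)$. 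So the skeleton of your plan is sound, but the faithfulness of the copy of $\ca(G)$ comes from this open-subgroupoid inclusion rather than from any compression to the regular representation, and that step would need to be spelled out for the proof to be complete.
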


As in \cite{LY21}, let $\gamma$ be the gauge action of $\bT^{\mathsf k}$ on $\O_{\bZ, \Lambda_\theta}$: 
\[
\gamma_\Bt(s_\mu u_g s_\nu^*)=\Bt^{d(\mu)-d(\nu)}s_\mu u_g s_\nu^*
\]
for all $\mu, \nu \in \Lambda_\theta$, $g\in \bZ$, and $\Bt\in \bT^{\mathsf{k}}$. The fixed point algebra, $\O_{\bZ, \Lambda_\theta}^\gamma$, of $\gamma$ is generated by the standard generators $s_\mu u_g s_\nu^*$ with $d(\mu)=d(\nu)$. We often write $\F$ to stand for $\O_{\bZ, \Lambda_\theta}^\gamma$. 
More generally, for $\Bn\in \bN^{\mathsf k}$, we define a mapping on $\O_{\bZ,\Lambda_\theta}$ by 
\[
\Phi_\Bn(x) = \int_{\bT^{\mathsf k}} \Bt^{-\Bn}\gamma_\Bt(x)d\Bt \text{ for all }x \in \O_{\bZ,\Lambda_\theta}.
\] 
Note that for $\mu,\nu \in \Lambda_\theta$, $g \in G$ we have
 \[
 \Phi_\Bn(s_\mu u_g s_\nu^*) = 
\begin{cases}
s_\mu u_g s_\nu^* & \text{if } d(\mu)-d(\nu) = \Bn, \\
0 & \text{otherwise.}
\end{cases}  
\]
In particular, $\O_{\bZ, \Lambda_\theta}^\gamma=\ran\Phi_{\mathbf 0}$. Also $\Phi_{\mathbf 0}$ is a faithful conditional expectation from $\O_{\bZ,\Lambda_\theta}$ onto 
$\O_{\bZ, \Lambda_\theta}^\gamma$. 

\smallskip
We end this subsection by briefly recalling the periodicity of $(G, \Lambda_\theta)$. 
Let $(G,\Lambda_\theta)$ be a self-similar $\mathsf k$-graph.
For $\mu,\nu \in \Lambda_\theta, g \in G$, the triple $(\mu,g,\nu)$ is called \textit{cycline} if $\mu(g \cdot x)=\nu x$ for all $x \in s(\nu)\Lambda^\infty$.
Clearly, every triple $(\mu, 1_G, \mu)$ $(\mu \in \Lambda)$ is cycline. Those cycline triples are said to be \textit{trivial}. 
 An infinite path $x \in \Lambda_\theta^\infty$ is said to be \textit{$G$-aperiodic} if, for $g \in G, \mathbf p, \mathbf q \in \mathbb{N}^{\mathsf k}$ with $g \neq 1_G$ or $\mathbf p \neq \mathbf q$, 
 we have $\sigma^{\mathbf p}(x) \neq g \cdot \sigma^{\mathbf q}(x)$; otherwise, $x$ is called \textit{$G$-periodic}.  
 $(G,\Lambda_\theta)$ is said to be \textit{aperiodic} if there exists a $G$-aperiodic path
 $x\in \Lambda_\theta^\infty$; and \textit{periodic} otherwise.

\begin{thm}[Li-Yang \cite{LY19$_2$}]
\label{T:equper}
$(G,\Lambda_\theta)$ is aperiodic $\iff$ all cycline triples are trivial. 
\end{thm}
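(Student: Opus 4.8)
The plan is to prove the equivalence $(G,\Lambda_\theta)$ aperiodic $\iff$ all cycline triples are trivial by establishing the contrapositive in each direction, translating the statement about periodicity of infinite paths into the statement about cycline triples. The main conceptual bridge is that a nontrivial cycline triple $(\mu,g,\nu)$ with $\mu(g\cdot x)=\nu x$ for all $x\in s(\nu)\Lambda^\infty$ should force \emph{every} infinite path (or at least a cofinal/full set of them) to be $G$-periodic, and conversely that a $G$-periodic infinite path should be manufacturable into a nontrivial cycline triple. Since $\Lambda_\theta$ is single-vertex, there is a unique vertex $v$ and $s(\nu)\Lambda^\infty=v\Lambda^\infty=\Lambda^\infty$, which simplifies the bookkeeping considerably: cofinality issues disappear and we do not have to track ranges and sources of the various paths.

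\textbf{From nontrivial cycline to periodic.} First I would assume there exists a nontrivial cycline triple $(\mu,g,\nu)$, so $\mu(g\cdot x)=\nu x$ for all $x\in\Lambda^\infty$, with $(\mu,g,\nu)\neq(\lambda,1_G,\lambda)$. Applying the degree functor $d$ to the factorization identity forces $d(\mu)=\Bp$ and $d(\nu)=\Bq$ to be comparable in a controlled way; comparing degrees on both sides of $\mu(g\cdot x)=\nu x$ and using the shift maps $\sigma^{\Bp},\sigma^{\Bq}$ I expect to extract a relation of the form $\sigma^{\Bq}(\mu x')=g\cdot\sigma^{\Bp}(\mu x')$ witnessing periodicity of a suitable path, so that no $G$-aperiodic path can exist. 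The key is to check that nontriviality of the triple (i.e. $\mu\neq\nu$ or $g\neq 1_G$) genuinely produces the inequality $g\neq 1_G$ or $\Bp\neq\Bq$ required in the definition of $G$-periodic, rather than collapsing back to the trivial case.

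\textbf{From periodic to nontrivial cycline.} Conversely, assuming $(G,\Lambda_\theta)$ is periodic means every $x\in\Lambda^\infty$ is $G$-periodic, so for each $x$ there are $g_x,\Bp_x,\Bq_x$ with $(g_x,\Bp_x)\neq(1_G,\Bq_x)$ and $\sigma^{\Bp_x}(x)=g_x\cdot\sigma^{\Bq_x}(x)$. The task is to upgrade these pointwise, path-dependent witnesses into a single triple $(\mu,g,\nu)$ valid for \emph{all} infinite paths. Here I would use a compactness/pigeonhole argument: the relevant data $(g,\Bp,\Bq)$ ranges over a countable set, the space $\Lambda^\infty$ is compact (row-finiteness), and the periodicity relation is closed, so a Baire-category or measure-theoretic argument should produce finite initial segments $\mu,\nu$ and a fixed $g$ that work uniformly, assembling the cycline triple. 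This mirrors the standard dichotomy arguments in \cite{DY091, DY09, HLRS15} for higher-rank graphs adapted to the self-similar setting.

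The hard part will be the second direction: passing from the abundance of path-by-path periodicities to a single global cycline triple. In the classical $\mathsf k$-graph theory this is exactly where local faithfulness and the aperiodicity dichotomy do heavy lifting, and in the self-similar case the restriction maps $g|_\mu$ complicate the uniformization because the witnessing group element $g$ interacts with the initial segment through the self-similarity axioms (1)--(5) of Definition \ref{D:ss}. I anticipate that the cleanest route is to invoke Theorem \ref{T:equper} as stated in \cite{LY19$_2$} at the level of the general self-similar $\mathsf k$-graph machinery and then verify that the single-vertex hypothesis removes the local-faithfulness obstruction, so that the equivalence specializes cleanly; the residual work is confirming that the cycline-triple formalism and the periodicity notion recorded above coincide with those in the cited reference under our simplified single-vertex conventions.
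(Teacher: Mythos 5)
The first thing to note is that the paper does not prove Theorem \ref{T:equper} at all: it is stated with the attribution ``Li--Yang \cite{LY19$_2$}'' and no proof is given, so there is no in-paper argument to compare against. Measured as a standalone proof, your proposal has two genuine gaps. In the direction ``nontrivial cycline $\Rightarrow$ no aperiodic path'', you conclude from the identity $\mu(g\cdot x)=\nu x$ that some suitable path is $G$-periodic and then assert ``so that no $G$-aperiodic path can exist.'' That inference is backwards: aperiodicity of $(G,\Lambda_\theta)$ only requires the existence of \emph{one} aperiodic path, and the cycline identity directly yields periodicity only of the paths $z=\nu x$, i.e.\ those extending $\nu$. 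The clean argument is the other contrapositive: assume an aperiodic path $x_0$ exists, apply the cycline identity to $x_0$ itself to get $z:=\nu x_0=\mu(g\cdot x_0)$, and compute $\sigma^{\Bp}(z)$ two ways with $\Bp:=d(\mu)\vee d(\nu)$; this gives $\sigma^{\Bp-d(\nu)}(x_0)=g|_{x_0(0,\Bp-d(\mu))}\cdot\sigma^{\Bp-d(\mu)}(x_0)$, and aperiodicity of $x_0$ forces $d(\mu)=d(\nu)$ and $g=g|_v=1_G$, hence $\mu=\nu$ by unique factorization. (The alternative route --- showing aperiodicity passes to extensions $\lambda x$ and to shifts --- is itself delicate here, because under shifting the witnessing group element is replaced by a restriction $g|_{\cdot}$, which can trivialize when $(G,\Lambda_\theta)$ is not locally faithful.)

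The second and more serious gap is the converse, ``periodic $\Rightarrow$ there is a nontrivial cycline triple,'' which is the actual content of the theorem. Your proposal offers only a plan (compactness/Baire category to uniformize the path-dependent data $(g_x,\Bp_x,\Bq_x)$ into a single $(\mu,g,\nu)$), and you correctly flag that the interaction of $g$ with initial segments via the restriction maps is where the difficulty lies --- but no argument is actually given, and it is not clear that the periodicity relation is closed jointly in $x$ and in $g$ ranging over an infinite group. Your closing suggestion to ``invoke Theorem \ref{T:equper} as stated in \cite{LY19$_2$}'' is circular as a proof of Theorem \ref{T:equper}. Since the paper itself only quotes the result, citing \cite{LY19$_2$} is the right move --- but then the submission should be an attribution, not a proof sketch that bottoms out in the statement being proved.
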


\subsection{Right LCM semigroup C*-algebras and their boundary quotient C*-algebras}
\label{SS:rLCM}

Let us recall some basics about right LCM semigroups and their C*-algebras from \cite{BRRW14}.
Let $P$ be a discrete left-cancellative semigroup. 
We say $P$ is a \textit{right LCM semigroup} if any two elements $x,y\in P$ with a right common multiple
have a right least common multiple $z\in P$. 
Equivalently, $P$ is right LCM if, for any $x,y\in P$, the intersection $xP\cap yP$ is either empty or equal to $zP$ for some $z\in P$. 

For a right LCM semigroup $P$, its C*-algebra $\ca(P)$ defined in \cite{Li12} can be greatly simplified as follows: 
$\ca(P)$ is the universal C*-algebra generated by isometries $\{v_p: p\in P\}$ and projections $\{e_{pP}: p\in P\}$ satisfying 
\begin{align}
\label{E:C*(P)}
v_pv_q= v_{pq},\ v_p e_{qP} v_p^*=e_{pqP}, \ e_P=1, \ e_\mt=0,\ e_{pP}e_{qP}=e_{pP\cap qP}
\end{align}
for all $p, q\in P$. 

Recall that a subset $F\subseteq P$ is called a \textit{foundation set} if it is finite and
for each $p \in P$ there exists $q\in F$ such that $pP \cap qP\ne \mt$.
Then the \textit{boundary quotient $\Q(P)$ of $\ca(P)$} is the universal C*-algebra generated by isometries $\{v_p: p\in P\}$ and projections $\{e_{pP}: p\in P\}$ satisfying the relations in \eqref{E:C*(P)}
and 
\[
\prod_{p\in F}(1-e_{pP})=0 \text{ for every foundation set $F\subseteq P$}. 
\]

\subsection{Semigroups from self-similar actions}
\label{SS:semiss}

Let $\Lambda_\theta$ be a single-vertex $\mathsf{k}$-graph. By $\Lambda^\epsilon$, we denote the set of all edges of $\Lambda_\theta$:
$\Lambda^\epsilon=\bigcup_{i=1}^{\mathsf{k}}\{e\in \Lambda: d(e)=\epsilon_i\}$. Suppose that $\bZ=\langle a \rangle$ acts on 
$\Lambda_\theta$ self-similarly.  Then one can naturally associate a semigroup to the self-similar 
$\mathsf k$-graph $(\bZ, \Lambda_\theta)$ as follows:
\begin{align}
\label{E:monoidS}
\mathsf{S}_{\bN, \Lambda_\theta}
:=\left\langle a, e: 
\begin{array}{ll}  
a e=a\cdot e a|_{e} &\text{if $e\in \Lambda_\theta^\epsilon$ and $a|_e\ge 0$}, \\
a e  (a|_{e})^{-1}=a\cdot e &\text{if $e\in \Lambda_\theta^\epsilon$ and $a|_e< 0$},
\end{array}
\; e\in\Lambda_\theta^\epsilon
 \right\rangle^+. 
\end{align}
This is the semigroup we focus on in this paper. Because of its importance, it deserves a name. 

\begin{defn}
The semigroup $\mathsf{S}_{\bN, \Lambda_\theta}$ defined in \eqref{E:monoidS} is called the \textit{semigroup of the self-similar $\mathsf k$-graph $(\bZ, \Lambda_\theta)$}. 
\end{defn}

Here are another semigroup and a group which are closely related to the semigroup $\mathsf{S}_{\bN, \Lambda_\theta}$: 
\begin{align*}
\mathsf{S}_{\bZ, \Lambda_\theta}
&:=\langle a, a^{-1}, e: a e=a\cdot e a|_e, \, e\in \Lambda_\theta^\epsilon \rangle^+,\\
\mathsf{G}_{\bZ, \Lambda_\theta}
&:=\langle a, e: a e=a\cdot e a|_e, \, e\in \Lambda_\theta^\epsilon \rangle.
\end{align*}

\begin{rem} Some remarks are in order. 
\label{rem:semiss}
\begin{enumerate}
\item 
We should mention that $\bN$ used in $\mathsf{S}_{\bN, \Lambda_\theta}$ emphasizes that only non-negative integers from $\bZ$ are involved, although the self-similar graph 
$(\bZ, \Lambda_\theta)$ is considered.

\item 
We have intended to call $\mathsf{S}_{\bN, \Lambda_\theta}$ the self-similar monoid/semigroup of $(\bZ, \Lambda_\theta)$. But the term `self-similar monoids/semigroups' is already used 
in the literature for a very different meaning (see, e.g., \cite{BGN02}), and is similar to the notion of groups over self-similar $\mathsf{k}$-graphs given in \cite{LY19$_2$}. 

\item 
At first glance, it seems that the semigroup $\mathsf{S}_{\bN, \Lambda_\theta}$ has been considered in \cite[Section 3]{LY19$_1$}. But one should notice that it is required that the restriction map is surjective in \cite{LY19$_1$}. This is a rather strong condition. Most semigroups $\mathsf{S}_{\bN, \Lambda_\theta}$ studied in this paper are not covered there. 
\end{enumerate}
\end{rem}

\section{Rank 1 case: more than GBS semigroups}
\label{S:R1}

For $1\le n\in \bN$, let $\mathsf{E}_n$ denote the single-vertex (directed) graph with $n$ edges. 
Suppose that $(\bZ, \mathsf E_n)$ is a self-similar graph. 
Assume that the action of $\bZ$ on the edge set $\mathsf{E}_n^1$ has $\kappa$ orbits $\E_i:=\{e_s^i:  a\cdot e_s^i = e_{s+1\!\!\mod n_i}^i, s\in [n_i]\}$
 for each $1\le i\le \kappa$. Thus 
\[
n=\sum_{i=1}^\kappa n_i \quad\text{and}\quad \mathsf{E}_n^1=\bigsqcup_{i=1}^\kappa \E_i. 
\]
For $1\le i\le \kappa$, let 
\[
m_i:=\sum_{s\in [n_i]} a|_{e_s^i} \quad \text{and}\quad m:=\sum\limits_{i=1}^\kappa m_i. 
\]
Clearly the self-similar action of $(\bZ, \mathsf{E}_n)$ induces a self-similar graph $(\bZ, \mathsf{E}_{n_i})$ for each $1\le i\le \kappa$. 
Conversely, if there is a self-similar action $\bZ$ on each $\mathsf{E}_{n_i}$, then these $\kappa$ self-similar graphs $(\bZ, \mathsf{E}_{n_i})$ determine a self-similar graph $(\bZ, \mathsf{E}_n)$. 

So, in the rank 1 case,  one can rewrite 
\begin{align*}
\mathsf{S}_{\bN, \mathsf{E}_n}
:=\left \langle a,  e\in \mathsf{E}_n^1: \!\!
\begin{array}{ll}
 a e=a\cdot e a|_{e} & \text{ if }a|_e\ge 0\\
a e  (a|_{e})^{-1}=a\cdot e & \text{ if }a|_e<0
\end{array}
\right\rangle^+.
 \end{align*}

Before going further, we should mention that \cite{Nek04} also deals with the rank 1 case. But there, in terms of our terminology, the action of $\bZ$ on the infinite path 
space $\mathsf E_n^\infty$ is assumed to be faithful. This in particular implies that the self-similar graph $(\bZ, \mathsf E_n)$ is aperiodic. Thus this eliminates all interesting (periodic) self-similar graphs 
(cf. Propositions \ref{P:gper}, \ref{P:Cartan n=1}, \ref{P:Cartan n>1}, and Theorem \ref{T:rk1Kir} below).

\textsf{Throughout this section, we assume that 
\[
m_i\ne 0 \text{ for all }1\le i\le \kappa.
\tag{\dag}
\]
}
This condition assures that the self-similar graph $(\bZ, \mathsf E_n)$ is pseudo-free (Lemma \ref{L:pse}), which is required in \cite{EP17, LY19$_2$, LY21}.

\begin{rem}
\label{R:emb}
It is worth mentioning that, under the assumptions ($\dag$), $\SM$ is embedded into  the group $\mathsf{G}_{\bZ, \mathsf{E}_n}$ (\cite{Adj66}).
\end{rem}

A special class of self-similar graphs is worth mentioning for later use.  

\begin{eg}[\textsf{$(n,m)$-odometer $\mathsf E(n,m)$}]
\label{eg:E(n,m)}
For $1\le n\in \bN$ and $0\ne m \in \bZ$, an \textit{$(n,m)$-odometer} is a self-similar graph $(\bZ, \SE)$ with the action and restriction given by  
\begin{align*}
a\cdot e_s &=
\begin{cases}
 e_{s+1} & \text{ if } 0\le s<n-1,  \\
 e_0 & \text{ if } s=n-1;
 \end{cases}\\
a|_{e_s} &=
\begin{cases}
0  &\text{ if } 0\le s<n-1 , \\
a^{m} & \text{ if } s=n-1.
\end{cases}
\end{align*}
The $(n,m)$-odometer is denoted as $\mathsf E(n,m)$. 
The case of $m=1$ yields the classical odometers which have been extensively studied in the literature (see, e.g., \cite{Nek05} and the references therein).  
\end{eg}

In the sequel, we provide two examples of important semigroups which can be realized as semigroups of self-similar graphs. 

\begin{eg}[\textsf{BS semigroups}]
\label{eg:BS}
For $1\le n\in \bN$ and $0\ne m \in \bZ$, the \textit{Baumslag-Solitar (BS) semigroup} is
\begin{align*}
\BS^+(n, m)
:=\left\langle a, b\mid\!\!
\begin{array}{ll}
a^n b=b a^{m} &\text{if $m>0$}  \\
a^{n} b a^{-m}=b &\text{if $m<0$}
\end{array}
\right\rangle^+.
\end{align*}
The semigroup $\BS^+(n, m)$ can be realized as the semigroup of an $(n,m)$-odometer. 

From now on, we use the semigroups $\BS^+(n,m)$ and $(n,m)$-odometer interchangeably. 
\end{eg}

\begin{eg}[\textsf{GBS semigroups}]
As the name indicates, this example generalizes BS semigroups in Example \ref{eg:BS}.
Let $1\le \kappa\in \bN \cup\{\infty\}$. 
For $1\le n_i\in \bN$ and $0\ne m_i\in \bZ$ ($1\le i\le \kappa$), the \textit{generalized Baumslag-Solitar (GBS) semigroup} is
\begin{align*}
\GBS_\kappa^+(n_i, m_i)
:=\left\langle a, b_i\mid \!\!
\begin{array}{ll}
a^{n_i} b_i=b_i a^{m_i} &\text{if $m_i>0$},  \\
a^{n_i} b_i a^{-m_i}=b_i &\text{if $m_i<0$},
\end{array}
\ 1\le i\le \kappa\right\rangle^+.
\end{align*}
The GBS semigroup $\GBS_k^+(n_i, m_i)$ can also be realized as the semigroup of a self-similar graph as follows. 
Let $\mathsf{E}$ be the single-vertex directed graph with the edge set $\{e^i_s: 1\le i\le \kappa, s\in [n_i]\}$. 
To each $1\le i\le \kappa$, we associate an $(n_i, m_i)$-odometer. 
Then 
$\GBS_\kappa^+(n_i, m_i)\cong \textsf{S}_{\bN, \mathsf{E}}$.
\end{eg}

Therefore, semigroups of self-similar graphs encompass GBS semigroups. 

\begin{rem} In this remark, let us mention some connections with the literature. 
\begin{enumerate}
\item 
BS semigroups usually provide a nice class of examples or counter-examples for some properties (e.g., \cite{BLRS20, HNSY21}). They have been attracting a lot of operator algebraists' attention recently. 
For instance, in \cite{Spi12}, the boundary quotient of the semigroup C*-algebra
$\BS^+(n,m)$ is first investigated via the C*-algebra for a category of paths. In \cite{CaHR16}, the KMS states of the semigroup C*-algebra of quasi-lattice ordered BS semigroups are studied. 
This is generalized to all BS semigroups later in \cite{BLRS20}. 

\item
Very recently, in \cite{CL23} Chen-Li study the C*-algebras for a class of semigroups, which are graphs of semigroups which are constructed very similarly to graphs of groups in \cite{Ser80}. 
There is some intersection: For instance, their semigroups encompass GBS semigroups. However, theirs do not include all semigroups $\mathsf{S}_{\bZ, \Lambda_\theta}$. Most importantly, 
theirs do not include any `genuine' higher rank BS semigroups studied in Section \ref{S:rkBS} below. 
\end{enumerate}
\end{rem}

\subsection{Some basic properties} 
The two lemmas below will be used frequently. One can prove the first one by simple calculations, and the second one by applying Remark \ref{R:emb}. Their proofs are omitted here. 

\begin{lem}
\label{L:formula}
Let $(\bZ, \SE)$ be a self-similar graph. Then, for $\ell\in \bZ$, $1\le i\le \kappa$, and $p\in [n_i]$, one has 
\begin{itemize}
\item[(i)] 
$a^{\ell n_i+ p}\cdot{e_s^i}=e_{(s+p)\!\!\mod\! n_i}^i$;

\item[(ii)] 
$
a^{\ell n_i+ p}|_{e_s^i}=
\begin{cases}
a^{\ell m_i}  &\text{ if $p=0$},\\
a^{\ell m_i} \prod_{q=0}^{p-1} a|_{e_{(s+q)\!\!\mod\! n_i}^i} & \text{ if $0<p<n_i-1$}.
\end{cases}
$ 
\end{itemize}
\end{lem}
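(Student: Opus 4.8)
The plan is to obtain (i) directly from the fact that the $\bZ$-action is by automorphisms, and then to bootstrap (ii) from a single cocycle recursion, working inside the abelian group $\bZ$ so that each full orbit period collapses to $a^{m_i}$.

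\emph{Part (i).} Since $\bZ$ acts on $\SE$ by a group homomorphism $\varphi$, the action of $a^k$ is the $k$-fold iterate of the action of $a$, for every $k\in\bZ$. The defining relation of the orbit $\E_i$ is $a\cdot e^i_s=e^i_{(s+1)\bmod n_i}$, so a one-line induction on $k$ gives $a^k\cdot e^i_s=e^i_{(s+k)\bmod n_i}$. Setting $k=\ell n_i+p$ and using $\ell n_i\equiv 0\ (\mathrm{mod}\ n_i)$ then yields part (i).

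\emph{Part (ii): the cocycle recursion.} I would first record that, for \emph{every} $k\in\bZ$, factoring $a^{k}=a\,a^{k-1}$ and applying Definition \ref{D:ss}(v) together with part (i) gives
\[
a^{k}|_{e^i_s}=a|_{a^{k-1}\cdot e^i_s}\;a^{k-1}|_{e^i_s}=a|_{e^i_{(s+k-1)\bmod n_i}}\;a^{k-1}|_{e^i_s},
\]
while the initial value $a^{0}|_{e^i_s}=1_G$ is Definition \ref{D:ss}(iv). Because $G=\bZ$ is abelian, it is convenient to pass to exponents: writing each $a|_{e^i_t}$ and each $a^{k}|_{e^i_s}$ as the unique power of $a$ that it is, the displayed identity becomes an \emph{additive} recursion in which the increment from $a^{k-1}|_{e^i_s}$ to $a^{k}|_{e^i_s}$ is exactly the exponent of $a|_{e^i_{(s+k-1)\bmod n_i}}$. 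This recursion is valid for all integers $k$; in particular it pins down the negative powers, subsuming the inverse relation $a^{-1}|_{e^i_{(s+1)\bmod n_i}}=(a|_{e^i_s})^{-1}$ one reads off at $k=0$.

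\emph{Part (ii): collapsing periods.} Telescoping this recursion over a block of $n_i$ consecutive values of $k$, the indices $(s+k-1)\bmod n_i$ sweep out each element of $[n_i]$ exactly once, so the total increment over one block equals $\sum_{s'\in[n_i]}a|_{e^i_{s'}}=m_i$ by the definition of $m_i$. Hence $a^{k+n_i}|_{e^i_s}$ and $a^{k}|_{e^i_s}$ differ by the factor $a^{m_i}$, and iterating $\ell$ times (for either sign of $\ell$) shows that the $\ell$ full periods contribute $a^{\ell m_i}$. Writing $k=\ell n_i+p$ with $0\le p<n_i$, the remaining $p$ steps contribute $\prod_{q=0}^{p-1}a|_{e^i_{(s+q)\bmod n_i}}$ (an empty product, hence $1_G$, when $p=0$), which gives precisely the asserted formula.

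There is no serious obstacle here—the paper rightly calls it a simple calculation. The only things demanding care are the modular bookkeeping of the indices and the case of negative $\ell$; both are handled uniformly by working additively in the abelian group $\bZ$, where the cocycle recursion holds for all integers and the per-period increment is the sign-independent constant $m_i$. (One should also note that the displayed range in the statement, read literally as $0<p<n_i-1$, should be $0<p\le n_i-1$; the same computation covers $p=n_i-1$ without change.)
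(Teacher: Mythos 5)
Your proof is correct and is exactly the ``simple calculation'' the paper alludes to when omitting the argument: part (i) by iterating the orbit relation, and part (ii) by the cocycle identity of Definition \ref{D:ss}(v) combined with part (i), telescoped additively in $\bZ$ so that each full period contributes $a^{m_i}$. Your remark that the stated range $0<p<n_i-1$ should read $0<p\le n_i-1$ is also accurate.
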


\begin{lem}
\label{L:canform}
Every element $x\in  \mathsf{S}_{\bN, \mathsf{E}_n}$ has a unique representation $x= e_\mu a^\ell$ for some $\mu \in \mathsf{E}_n^*$ and $\ell\in \bZ$. 
\end{lem}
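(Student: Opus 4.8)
The plan is to work inside the enveloping group $\SG$, into which $\SM$ embeds by Remark~\ref{R:emb}. This lets me use $a^{-1}$ freely and gives cancellation, so that a product $e_\mu a^\ell$ with $\ell<0$ has an unambiguous meaning as the group element it denotes, and so that the two parts of a normal form can be peeled off one at a time.

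\textbf{Existence.} First I would upgrade the defining relation $ae=(a\cdot e)\,a|_e$ to arbitrary powers of $a$ and arbitrary paths. Using axioms (iii) and (v) of Definition~\ref{D:ss} together with $1_G|_e=1_G$, a short induction on $|j|$ gives, in $\SG$, the identity $a^{j}e=(a^{j}\cdot e)\,a^{j}|_{e}$ for every $j\in\bZ$ and every edge $e$, where $a^{j}|_{e}$ is the power of $a$ computed in Lemma~\ref{L:formula}. Iterating along the letters of a path and invoking axioms (i),(iii) of Definition~\ref{D:ss} yields the path version $a^{j}e_\mu=e_{a^{j}\cdot\mu}\,a^{j}|_{\mu}$, with $a^{j}\cdot\mu\in\mathsf{E}_n^*$ of the same length as $\mu$. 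Now set $N:=\{e_\mu a^\ell:\mu\in\mathsf{E}_n^*,\ \ell\in\bZ\}\subseteq\SG$. Right multiplication by $a$ sends $e_\mu a^\ell$ to $e_\mu a^{\ell+1}\in N$, and right multiplication by an edge $e$ gives $e_\mu a^\ell e=e_\mu\,e_{a^\ell\cdot e}\,(a^\ell|_{e})=e_{\mu(a^\ell\cdot e)}\,(a^\ell|_{e})\in N$, since $a^\ell|_e$ is a power of $a$. As $N$ contains the identity and is closed under right multiplication by the generators $a$ and $e$ of $\SM$, it contains all of $\SM$; this is existence.

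\textbf{Uniqueness.} Suppose $e_\mu a^\ell=e_{\mu'}a^{\ell'}$. Applying the length homomorphism $\deg:\SG\to\bZ$ determined by $a\mapsto 0$, $e\mapsto 1$ (well defined because in each relation $a\cdot e$ is again a single edge while $a|_e$ is a power of $a$, so both sides have equal image) gives $|\mu|=|\mu'|=:d$. To separate the two parts I would pass to $\O_{\bZ,\mathsf{E}_n}$: by relation (iv) of Definition~\ref{D:O} the assignment $e\mapsto s_e$, $a\mapsto u_a$ respects the relations \eqref{E:monoidS}, hence extends to a monoid homomorphism $\Psi$ on $\SM$ with $\Psi(e_\mu a^\ell)=s_\mu u_{a^\ell}$ (for $\ell<0$ one applies $\Psi$ to any positive word representing the element and pushes the $u_a$'s to the right as above, using that $u_a$ is unitary). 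For $|\mu|=|\mu'|=d$ relation (iii) of Definition~\ref{D:O} gives the Cuntz--Krieger orthogonality $s_{\mu'}^{*}s_\mu=\delta_{\mu,\mu'}\,I$, so multiplying $s_\mu u_{a^\ell}=s_{\mu'}u_{a^{\ell'}}$ on the left by $s_{\mu'}^{*}$ forces $\mu=\mu'$; cancelling $s_\mu$ then yields $u_{a^\ell}=u_{a^{\ell'}}$, and since $(\bZ,\mathsf{E}_n)$ is pseudo-free under $(\dag)$ (Lemma~\ref{L:pse}) and $\bZ$ is amenable, $g\mapsto u_g$ is faithful by Proposition~\ref{P:genO}(3), whence $\ell=\ell'$.

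\textbf{Main obstacle.} The genuinely delicate point is pinning down $\ell$. The length homomorphism is blind to $a$, and in general (already when some $m_i\neq n_i$) there is \emph{no} homomorphism of $\SG$ to an abelian group detecting the exponent of $a$; moreover the action of $\bZ$ on $\mathsf{E}_n^\infty$ need not be faithful, so $\ell$ cannot simply be read off the infinite path space. What makes the argument go through is precisely that $\langle a\rangle\cong\bZ$ has infinite order inside $\SG$ and is detected by the faithful unitary representation $g\mapsto u_g$ in $\O_{\bZ,\mathsf{E}_n}$, which is exactly where condition $(\dag)$ (via pseudo-freeness) is indispensable; equivalently, one may appeal to the normal-form theory of $\SG$ as a generalized Baumslag--Solitar group underlying the embedding of Remark~\ref{R:emb}.
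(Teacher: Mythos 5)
Your proof is correct, and it is worth comparing with the paper's, which is omitted entirely: the authors say only that the lemma follows ``by applying Remark~\ref{R:emb}'', i.e.\ from the embedding $\SM\hookrightarrow \SG$ provided by Adjan's theorem under $(\dag)$, the implicit plan being to push every occurrence of $a$ to the right (exactly your existence step) and then read off uniqueness from the normal form of the group $\SG$, an iterated HNN-type extension. Your existence argument coincides with that plan. Where you genuinely diverge is uniqueness: rather than invoking a normal-form theorem for $\SG$, you represent $\SM$ in $\O_{\bZ,\mathsf{E}_n}$ and combine the Cuntz--Krieger orthogonality $s_{\mu'}^*s_\mu=\delta_{\mu,\mu'}I$ with the faithfulness of $g\mapsto u_g$ from Proposition~\ref{P:genO}(iii), which requires pseudo-freeness (Lemma~\ref{L:pse}, hence $(\dag)$) and amenability of $\bZ$. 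This is legitimate and non-circular, since Proposition~\ref{P:genO} is quoted from prior work and Lemma~\ref{L:pse} does not depend on the present lemma; what it buys is that no combinatorial normal-form theorem for the group is needed. Two points deserve to be made explicit. First, $\Psi$ is only a monoid homomorphism on $\SM$ and does \emph{not} extend to $\SG$ (the $s_e$ are proper isometries when $n>1$), so to evaluate $\Psi$ on $e_\mu a^\ell$ with $\ell<0$, or to pass from the group identity $e_\mu a^{\ell-\ell'}=e_{\mu'}$ to an identity in $\SM$ to which $\Psi$ can be applied, you must use the injectivity of $\SM\to\SG$ from Remark~\ref{R:emb}; your parenthetical remark gestures at this but it is the load-bearing step. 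Second, after left-multiplying by $s_{\mu'}^*$ one should note that $u_{a^{\ell'}}$ is a unitary, hence nonzero, which is what forces $\mu=\mu'$ before cancelling. With those two sentences added, the argument is complete.
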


\begin{prop}
\label{P:rLCM}
 $\SM$ is right LCM. 
\end{prop}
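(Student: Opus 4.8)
The plan is to work throughout with the normal form of Lemma~\ref{L:canform}, carrying out every computation inside the group $\mathsf{G}_{\bZ,\mathsf{E}_n}$ so that cancellation is available (Remark~\ref{R:emb}) and only checking at the end that the elements I produce actually lie in $\SM$. Write the two given elements as $x=e_\mu a^\ell$ and $y=e_\nu a^k$. First I would record the multiplication rule: after extending the action and restriction from edges to paths, pushing $a^\ell$ past $e_\sigma$ via the defining relations gives
\[
(e_\mu a^\ell)(e_\sigma a^p)=e_{\mu\,(a^\ell\cdot\sigma)}\,a^{\,\ell|_\sigma+p}
\]
for all $\sigma\in\mathsf E_n^*$ and $p\in\bZ$. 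Since $\sigma\mapsto a^\ell\cdot\sigma$ is a length-preserving bijection of $\mathsf E_n^*$, the set of path-parts (images under $d$) occurring in $x\SM$ is exactly $\mu\,\mathsf E_n^*$. As $\mathsf E_n^*$ is free, if $\mu$ and $\nu$ are incomparable (neither a prefix of the other) then $\mu\,\mathsf E_n^*\cap\nu\,\mathsf E_n^*=\mt$, and hence $x\SM\cap y\SM=\mt$.

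When $\mu$ and $\nu$ are comparable, say $\nu=\mu\rho$ (the other case being symmetric), the goal is to collapse to a single path. Right-multiplying $x$ by the edge-word $e_{a^{-\ell}\cdot\rho}\in\SM$ produces $x\,e_{a^{-\ell}\cdot\rho}=e_\nu a^{j}\in x\SM$ with $j=\ell|_{a^{-\ell}\cdot\rho}$, so $x\SM$ contains an element whose path-part $\nu$ matches that of $y$. The candidate for the right least common multiple is then $z=e_\nu a^{M}$ for a suitable $M$; because positive powers of $a$ lie in $\SM$, once $M$ is large enough one checks directly that $z\in x\SM$ and $z\in y\SM$, which already gives the easy inclusion $z\SM\subseteq x\SM\cap y\SM$.

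The heart of the argument is the reverse inclusion, i.e.\ that a single $z$ captures all common right multiples. The key structural fact is that for each path $\pi\in\mu\,\mathsf E_n^*$ the set $\{e\in\bZ:\ e_\pi a^{e}\in x\SM\}$ is nonempty (right-multiplying $x$ by a suitable edge-word realizes path-part $\pi$) and up-closed (as $a\in\SM$), hence equals $[E_x(\pi),\infty)$ for some $E_x(\pi)\in\bZ\cup\{-\infty\}$. Thus $x\SM=\{e_\pi a^{e}:\pi\in\mu\,\mathsf E_n^*,\ e\ge E_x(\pi)\}$ and likewise $y\SM=\{e_\pi a^{e}:\pi\in\nu\,\mathsf E_n^*,\ e\ge E_y(\pi)\}$, with $E_y(\nu)=k$. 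Since $\nu\,\mathsf E_n^*\subseteq\mu\,\mathsf E_n^*$,
\[
x\SM\cap y\SM=\big\{e_\pi a^{e}:\ \pi\in\nu\,\mathsf E_n^*,\ e\ge\max(E_x(\pi),E_y(\pi))\big\}.
\]
The statement then reduces to finding an exponent $M$ so that $z=e_\nu a^{M}$ satisfies $E_z(\pi)=\max(E_x(\pi),E_y(\pi))$ for every $\pi\in\nu\,\mathsf E_n^*$; the forced value is $M=\max(E_x(\nu),k)$, and up-closure guarantees that once this identity holds the intersection is exactly $z\SM$.

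The step I expect to be the main obstacle is precisely this identity, and the difficulty is caused entirely by negative restriction values $a|_e<0$. If every $a|_e\ge 0$ then no negative slack arises, $E_x(\pi)=\ell|_{a^{-\ell}\cdot\pi}$ telescopes through the chain rule for restrictions, and $\SM$ is an honest Zappa--Szép product $\mathsf E_n^*\bowtie\bN$ for which the right LCM property is standard. When some $a|_e<0$, the cocycle formula $a^{n_i}|_{e^i_s}=a^{m_i}$ of Lemma~\ref{L:formula} drives the exponent floors down without bound along the cyclic orbits (so $E_x(\pi)=-\infty$ for many $\pi$), and one must show this happens coherently enough that $\max(E_x(\pi),E_y(\pi))$ is still realized by a single $z$. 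I would handle this orbit by orbit, using condition $(\dag)$ (each $m_i\ne 0$, so no orbit is neutral) to determine when $E_x(\pi)$ is finite versus $-\infty$, and then verify the identity in the finitely many resulting cases; the embedding into $\mathsf{G}_{\bZ,\mathsf{E}_n}$ is what lets me compute the candidate quotient $z^{-1}v$ unambiguously and test its membership in $\SM$ against the normal form.
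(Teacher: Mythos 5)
Your overall route is the same as the paper's: reduce to the case where one path part is a prefix of the other, transport the residual path through the $\bZ$-action to land both elements over a common path $\nu$, and then determine the correct power of $a$. Your normal-form bookkeeping is sound: for each $\pi$ the exponent set $\{e:e_\pi a^{e}\in x\SM\}$ is indeed nonempty and up-closed, so $x\SM=\{e_\pi a^{e}:\pi\in\mu\,\mathsf E_n^{*},\ e\ge E_x(\pi)\}$, the description of $x\SM\cap y\SM$ via $\max(E_x(\pi),E_y(\pi))$ is valid, and the candidate exponent $M=\max(E_x(\nu),k)$ is the right one. In fact your floor functions are \emph{necessary}, not just convenient: when some $a|_e<0$ the exponent obtained by simply pushing the group part through the residual path need not be minimal over $\nu$, because $E_1(\sigma)=\min\{p:e_\sigma a^{p}\in\SM\}$ can be negative or $-\infty$. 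For instance in $\BS^{+}(1,-1)=\langle a,b: aba=b\rangle^{+}$, with $x=a^{10}b=ba^{-10}$ and $y=b^{2}$, one has $b^{2}=(a^{10}b)(a^{10}b)\in x\SM\cap y\SM$, so the least common multiple is $b^{2}$ itself and not $b^{2}a^{10}$, which is what the pushed-through exponent $a^{-10}|_{b}=a^{10}$ alone would suggest. So the extra care is genuinely earning its keep here.

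The problem is that your argument stops exactly at the statement that carries all of the mathematical content. You reduce the proposition to the identity $E_z(\pi)=\max\bigl(E_x(\pi),E_y(\pi)\bigr)$ for all $\pi\in\nu\,\mathsf E_n^{*}$, declare it to be ``the main obstacle,'' and then only describe how you \emph{would} check it (``orbit by orbit \dots in the finitely many resulting cases'') without checking it. The inequalities $E_z(\pi)\ge E_x(\pi)$ and $E_z(\pi)\ge E_y(\pi)$ come for free once $z\in x\SM\cap y\SM$; the substantive direction is $E_z(\pi)\le\max\bigl(E_x(\pi),E_y(\pi)\bigr)$. When $M=k$ this is trivial ($z=y$), but when $M=E_x(\nu)>k$ it requires a divisibility lemma that you never state or prove: the minimal element $e_\nu a^{E_x(\nu)}$ of $x\SM$ over the path $\nu$ must left-divide the minimal element of $x\SM$ over every extension $\nu\tau$, i.e.\ $E_{e_\nu a^{E_x(\nu)}}(\nu\tau)\le E_x(\nu\tau)$ for all $\tau$. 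That lemma is precisely where the cocycle identity of Lemma \ref{L:formula}, the signs of the $m_i$ from assumption $(\dag)$, and the dichotomy $E_1(\sigma)\in\bZ$ versus $E_1(\sigma)=-\infty$ have to be controlled (most plausibly by induction on $|\tau|$). Until that is done, what you have is a correct and well-organized reduction, not a proof.
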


\begin{proof}
Consider $e_\mu a^k$ and $e_\nu a^\ell$ in $\SM$. It is not hard to see that they have a right common upper bound if, 
and only if either $e_\mu = e_\nu e_{\tilde \mu}$ for some $e_{\tilde \mu}\in \mathsf E_n^*$ or 
$e_\nu = e_\mu e_{\tilde \nu}$ for some $e_{\tilde \nu}\in \mathsf E_n^*$. WLOG we assume that $e_\nu = e_\mu e_{\tilde \nu}$ for some $e_{\tilde \nu}\in \mathsf E_n^*$. 
Let $e_\alpha:=a^{-k}\cdot e_{\tilde \nu}$. Then one can show the following: If $a^\ell \ge a^k|_{e_\alpha}$ (resp. $a^\ell <  a^k|_{e_\alpha}$), then $e_\nu a^\ell$
(reps. $e_\nu  a^k|_{e_\alpha}$) is a least right common upper bound of $e_\mu a^k$ and $e_\nu a^\ell$ (in $\SM$).  
\end{proof}

\begin{rem}
If $a|_e\ge 0$ for all $e\in \mathsf{E}_n$, then $\SM$ is a Zappa-Sz\'ep product of the semigroups $\bN$ and $\Fn$ (\cite{BRRW14}).
\end{rem}

Since $\SM$ is right LCM, from Subsection \ref{SS:rLCM} and the analysis above, one has the following

\begin{cor}
\label{C:QO}
$
\Q(\SM)\cong\O_{\bZ, \mathsf{E}_n}\cong \Q(\mathsf{S}_{\bZ, \mathsf{E}_n}).
$
\end{cor}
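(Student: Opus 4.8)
The plan is to prove both isomorphisms by matching universal properties, since $\SM$ is right LCM (Proposition \ref{P:rLCM}) and hence $\Q(\SM)$ is the universal C*-algebra generated by isometries $\{v_p\}_{p\in\SM}$ and range projections $\{e_{p\SM}\}$ subject to \eqref{E:C*(P)} together with the foundation-set relations. First I would build a $*$-homomorphism $\O_{\bZ,\mathsf{E}_n}\to\Q(\SM)$ by setting $s_e:=v_e$ for $e\in\mathsf{E}_n^1$ and $u_a:=v_a$, and then verifying the four relations of Definition \ref{D:O}. Relations (i) and (ii) follow from $v_pv_q=v_{pq}$ once $v_a$ is shown to be unitary, while (iv), namely $u_as_e=s_{a\cdot e}u_{a|_e}$, is exactly the translation of the defining relation $ae=(a\cdot e)a|_e$ of $\SM$ (reading the case $a|_e<0$ through $ae(a|_e)^{-1}=a\cdot e$). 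The content is to produce relation (iii), i.e. $\sum_{e\in\mathsf{E}_n^1}v_ev_e^*=I$, and the unitarity of $v_a$, from the boundary relations.

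For (iii), I would check that the edge set $\mathsf{E}_n^1$ is a foundation set for $\SM$: using the canonical form $p=e_\mu a^\ell$ of Lemma \ref{L:canform}, every $p$ with $\mu\ne\emptyset$ lies in $e_{\mu_1}\SM$, and every $p=a^\ell$ meets some $e\SM$ after pushing the action of $a^\ell$ onto a single edge. Since distinct edges generate disjoint right ideals (again by uniqueness of canonical forms), the projections $v_ev_e^*=e_{e\SM}$ are pairwise orthogonal, so the foundation-set relation $\prod_e(1-e_{e\SM})=0$ collapses to $\sum_e v_ev_e^*=I$. Iterating over $\Bn$ gives the higher cases of (iii).

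The main obstacle is the unitarity of $v_a$ in $\Q(\SM)$, since $a$ is not invertible in $\SM$ (indeed $1\notin a\SM$), so $v_av_a^*\ne I$ already in $\ca(\SM)$. I would resolve this by showing that the singleton $\{a\}$ is itself a foundation set: given $p=e_\mu a^\ell$, I pass to the group $\mathsf{G}_{\bZ,\mathsf{E}_n}$, into which $\SM$ embeds by Remark \ref{R:emb}, and compute $a^{-1}pa^N=(a^{-1}\cdot e_\mu)\,a^{\,j+\ell+N}$ for a suitable integer $j$; choosing $N$ large makes the exponent nonnegative, so this element lies in $\SM$ and exhibits $pa^N\in a\SM\cap p\SM$. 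Hence $a\SM$ meets every principal right ideal, the relation $1-e_{a\SM}=0$ holds in $\Q(\SM)$, and $v_a$ is unitary. Once this is in place the homomorphism $\O_{\bZ,\mathsf{E}_n}\to\Q(\SM)$ is well defined.

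Conversely, in $\O_{\bZ,\mathsf{E}_n}$ I would set $V_p:=s_{e_\mu}u_a^\ell$ and $E_{p\SM}:=s_{e_\mu}s_{e_\mu}^*$ for $p=e_\mu a^\ell$ (using the unitarity of $u_a$ to make sense of $u_a^\ell$ for $\ell<0$); these are isometries and projections satisfying \eqref{E:C*(P)} and the foundation-set relations, the latter because $\sum_e s_es_e^*=I$. Universality of $\Q(\SM)$ then yields the reverse homomorphism, and checking that the two maps are mutually inverse on the generating isometries and projections finishes $\Q(\SM)\cong\O_{\bZ,\mathsf{E}_n}$. For the last isomorphism $\O_{\bZ,\mathsf{E}_n}\cong\Q(\mathsf{S}_{\bZ,\mathsf{E}_n})$ the same scheme applies, and is in fact cleaner: since $a$ is invertible already in $\mathsf{S}_{\bZ,\mathsf{E}_n}$, the generator $v_a$ is automatically unitary and no foundation-set argument is needed to invert it, so only the edge foundation set and the covariance relation $ae=(a\cdot e)a|_e$ must be matched.
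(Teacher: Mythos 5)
Your proposal is correct and follows essentially the same route as the paper: Proposition \ref{P:rLCM} together with the foundation sets $\{a\}$ and $\{e_i:i\in[n]\}$ (the first forcing $v_a$ to be unitary, the second giving the Cuntz--Krieger relation), and then matching the two universal properties via the canonical forms of Lemma \ref{L:canform}. The paper's proof is just a terser version of yours --- it writes down only the map $v_{e_\mu a^\ell}\mapsto s_{e_\mu}u_{a^\ell}$, $e_{e_\mu a^\ell\SM}\mapsto s_{e_\mu}s_{e_\mu}^*$ and asserts it is an isomorphism --- so your filling in of the foundation-set verifications and the inverse homomorphism is a faithful expansion rather than a different argument.
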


\begin{proof}
By Proposition \ref{P:rLCM}, the sets $\{a\}$ and $\{e_i: i\in [n]\}$ are foundation sets of $\SM$. Then the map 
\[
\Q(\SM)\to\O_{\bZ, \mathsf{E}_n},\ v_{e_\mu a^\ell} \mapsto s_{e_\mu} u_{a^\ell},\ 
e_{e_\mu a^\ell\SM}\mapsto s_{e_\mu} s_{e_\mu}^*
\]
yields an isomorphism.  The proof of $\Q(\mathsf S_{\bZ, \mathsf E_n})\cong\O_{\bZ, \mathsf{E}_n}$ is even simpler. 
\end{proof}

\subsection{Pseudo-freeness of $(\bZ, \SE)$} 

Let $(\bZ, \SE)$ be a self-similar graph satisfying our standing assumption ($\dag$).

\begin{lem}
\label{L:pse}The self-similar graph $(\bZ, \mathsf{E}_n)$ is pseudo-free. 
\end{lem}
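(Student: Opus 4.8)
The plan is to verify the defining condition in Definition \ref{D:pf} head on: assuming $a^\ell\cdot\mu=\mu$ and $a^\ell|_\mu=1_{\bZ}$ for some $\ell\in\bZ$ and $\mu\in\mathsf E_n$, I want to force $\ell=0$. The main engine will be the multiplicative hypothesis $(\dag)$ combined with the explicit action and restriction formulas recorded in Lemma \ref{L:formula}.

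First I would reduce the fixed-path condition $a^\ell\cdot\mu=\mu$ to a chain of single-edge conditions read off along $\mu$. Writing $\mu=f_1f_2\cdots f_r$ as a product of edges (the case $\mu=v$ is immediate, since $a^\ell|_v=a^\ell$ by Definition \ref{D:ss}(2), so $a^\ell|_\mu=1_{\bZ}$ already gives $\ell=0$), I repeatedly apply the self-similarity identity \ref{D:ss}(1), namely $a^\ell\cdot(f_1\cdots f_r)=(a^\ell\cdot f_1)\big(a^\ell|_{f_1}\cdot(f_2\cdots f_r)\big)$, and use the unique factorization property of $\mathsf E_n$ to match degree-one initial segments at each stage. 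Setting $g_0:=a^\ell$ and $g_j:=g_{j-1}|_{f_j}$, so that $g_j=a^\ell|_{f_1\cdots f_j}$ by \ref{D:ss}(3), this matching yields $g_{j-1}\cdot f_j=f_j$ for every $j$, while the second hypothesis becomes $g_r=a^\ell|_\mu=1_{\bZ}$.

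Next I would run the resulting recursion. If $f_j\in\E_{i_j}$ and $g_{j-1}=a^{\ell_{j-1}}$, then $g_{j-1}\cdot f_j=f_j$ forces $n_{i_j}\mid\ell_{j-1}$ by Lemma \ref{L:formula}(i), and then the $p=0$ branch of Lemma \ref{L:formula}(ii) gives $g_j=a^{\ell_{j-1}m_{i_j}/n_{i_j}}$, i.e. $\ell_j=\ell_{j-1}\,m_{i_j}/n_{i_j}$. Iterating from $\ell_0=\ell$ down to $\ell_r$ produces
\[
\ell_r=\ell\prod_{j=1}^r\frac{m_{i_j}}{n_{i_j}}.
\]
Since $g_r=1_{\bZ}$ means $\ell_r=0$, and since $(\dag)$ guarantees each $m_{i_j}\neq 0$ while each $n_{i_j}\geq 1$, the product is nonzero; hence $\ell=0$, which is exactly pseudo-freeness.

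The only delicate point is the bookkeeping: I must ensure that the divisibility $n_{i_j}\mid\ell_{j-1}$ persists at every stage, so that the $p=0$ branch of Lemma \ref{L:formula}(ii) legitimately applies throughout. This is supplied for free by the matched conditions $g_{j-1}\cdot f_j=f_j$ coming from the factorization step, so no separate argument is needed. I therefore expect the whole proof to be a short induction on the length $r$ of $\mu$, with the standing assumption $(\dag)$ entering only at the final step to exclude $\ell\neq 0$.
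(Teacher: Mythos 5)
Your proposal is correct and follows essentially the same route as the paper: both arguments reduce the hypothesis $a^\ell\cdot\mu=\mu$, $a^\ell|_\mu=1_{\bZ}$ to an edge-by-edge analysis via Lemma \ref{L:formula}, showing that each successive restriction is $a^{\ell_{j-1}m_{i_j}/n_{i_j}}$ and invoking the standing assumption $(\dag)$ to conclude $\ell=0$. The only (cosmetic) difference is that you run a forward recursion along $\mu$ culminating in the explicit product formula $\ell_r=\ell\prod_j m_{i_j}/n_{i_j}$, whereas the paper phrases the same computation as an induction that peels off the last edge of $\mu$.
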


\begin{proof}
This follows from Lemma \ref{L:formula}. In fact, 
suppose that $g\cdot \mu=\mu$ and $g|_\mu=0$. If $|\mu|=1$, it then follows from Lemma \ref{L:formula} and the assumption ($\dag$) that $g=0$.
Now suppose that $g\cdot \mu=\mu$ and $g|_\mu=0$ with $|\mu|=k$ imply $g=0$. 
Let $g\cdot(\mu e_s^i)=\mu e_s^i$ and $g|_{\mu e_s^i}=0$ for some edge $e_s^i$ in the $i$-th orbit. Then 
$g\cdot \mu g|_\mu\cdot e_s^i=\mu e_s^i\implies g\cdot \mu=\mu$ and $g|_\mu\cdot e_s^i=e_s^i$.
So the latter implies $g|_\mu=a^{\ell n_i}$ for some $\ell\in\bZ$. But also 
$g|_{\mu e_s^i}=0$ implies that $0=g|_\mu|_{e_s^i}=a^{\ell n_i}|_{e_s^i}=a^{\ell m_i}$. 
Hence $\ell=0$ as $m_i\ne 0$. Therefore $g\cdot \mu=\mu$ and $g|_\mu=0$. By our inductive assumption, we have $g=0$. 
This proves the pseudo-freeness of $(\bZ, \mathsf{E}_n)$. 
\end{proof}

\begin{rem}
Lemma \ref{L:pse} is no longer true if $m_i=0$ for some $1\le i\le \kappa$. 
For example, consider the self-similar graph $(\bZ, \textsf{E}_2)$ with $ae_1=e_2a$ and $ae_2=e_1a^{-1}$. Then 
$a^2e_i=e_i$ for $i=1,2$, and $a^2|_{e_i}=0$. Clearly, this self-similar graph $(\bZ, \mathsf{E}_2)$ is not pseudo-free.
\end{rem}

\subsection{The periodicity of $(\bZ, \SE)$} 
In this subsection, we study the periodicity of self-similar graphs $(\bZ, \mathsf{E}_n)$ in detail. We first analyze the case when $\kappa=1$, and then use it to study the general case. 

Recall that $\kappa$ is the number of orbits of $\bZ$ on $\SE$. 

\subsubsection{The case of $\kappa=1$}

\begin{prop}
\label{P:kappa=1}
If $\kappa=1$, then $(\bZ, \mathsf{E}_n)$ is periodic if and only if $n\!\mid\! m$. 
\end{prop}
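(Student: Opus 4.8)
The plan is to reduce everything to the characterization of periodicity in Theorem~\ref{T:equper}: since $(\bZ,\SE)$ is aperiodic iff every cycline triple is trivial, it suffices to decide when a \emph{nontrivial} cycline triple $(\mu,a^t,\nu)$ exists. The technical heart is a single computation describing when a group element acts trivially on the infinite path space, which I would isolate first as a claim: for $t\in\bZ$, the element $a^t$ fixes every $x\in\SE^\infty$ if and only if $n^{i+1}\mid t\,m^{i}$ for all $i\ge 0$. To prove this claim I would unwind the action on an infinite path $x=e_{s_1}e_{s_2}\cdots$ one edge at a time using Lemma~\ref{L:formula}: part (i) forces $n\mid t$ in order to fix the first edge for every value of $s_1$, and then part (ii) shows $a^t|_{e_{s_1}}=a^{(t/n)m}$ independently of $s_1$, so fixing the tail amounts to the same problem for the exponent $t_1=(t/n)m$. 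Iterating with $t_{i+1}=(t_i/n)m$ produces exactly the divisibility conditions $n^{i+1}\mid t\,m^i$.

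For the direction $n\mid m\Rightarrow$ periodic, I would simply check the claim with $t=n$: the conditions become $n^{i}\mid m^{i}$, which are automatic once $n\mid m$. Hence $a^n$ fixes every infinite path, so the triple $(v,a^n,v)$ (with $v$ the unique vertex) is cycline, and it is nontrivial because $a^n\neq 1_{\bZ}$ as $n\ge 1$. By Theorem~\ref{T:equper} the graph is not aperiodic, i.e. periodic. This also covers $n=1$, where $1\mid m$ always and $a$ itself fixes the single path.

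For the converse I would argue the contrapositive: assuming $n\nmid m$ (so in particular $n\ge 2$), I show every cycline triple $(\mu,a^t,\nu)$ is trivial. Writing $j=d(\mu)$, $k=d(\nu)$ and comparing the two infinite paths in $\mu(a^t\cdot x)=\nu x$ edge by edge, the case $j\neq k$ forces one of $\mu,\nu$ to be a proper prefix of the other and reduces the identity to $a^t\cdot x=\nu' x$ (or $\mu'(a^t\cdot x)=x$) for a fixed nonempty word $\nu'$ (resp.\ $\mu'$); this is impossible because $x\mapsto a^t\cdot x$ is a bijection of $\SE^\infty$ whereas prepending a fixed nonempty word is not surjective when $n\ge 2$. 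In the remaining case $j=k$ one gets $\mu=\nu$ and $a^t$ fixing every path, so by the claim $n^{i+1}\mid t\,m^i$ for all $i$; choosing a prime $\ell$ with $v_\ell(n)>v_\ell(m)$ (which exists since $n\nmid m$) yields $v_\ell(t)\ge v_\ell(n)+i\,(v_\ell(n)-v_\ell(m))\to\infty$, forcing $t=0$. Thus every cycline triple is trivial and $(\bZ,\SE)$ is aperiodic.

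The main obstacle I anticipate is the bookkeeping in the claim: making the ``one edge at a time'' unwinding rigorous, in particular verifying that the restriction $a^t|_{e_{s_1}}$ is genuinely independent of $s_1$ (this is where $n\mid t$, hence the $p=0$ branch of Lemma~\ref{L:formula}(ii), is essential), so that the recursion in the single exponent $t_i$ really captures ``fixes \emph{every} path'' rather than a single one. The only other point needing a line of care is the surjectivity/non-surjectivity dichotomy in the unequal-length case, which quietly uses that the $\bZ$-action on $\SE^\infty$ is by bijections (with inverse $a^{-t}$) together with $n\ge 2$.
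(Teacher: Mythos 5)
Your proof is correct and follows essentially the same route as the paper: both directions reduce to cycline triples via Theorem~\ref{T:equper}, the unequal-length case is killed by the same cylinder/bijectivity observation, and the equal-length case comes down to the same divisibility recursion $n^{i+1}\mid t\,m^{i}$ (the paper phrases it as $k_{p+1}=k_1m^p/n^p\in\bZ$ and finishes with a $\gcd$ reduction where you use $\ell$-adic valuations, a purely cosmetic difference).
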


\begin{proof}
Suppose that $m=n\ell$ for some $\ell \in \bZ$. 
Let $x=e_{i_1}e_{i_2}\cdots\in \mathsf{E}_n^\infty$ be an infinite path. 
Repeatedly applying Lemma \ref{L:formula} gives 
\[
a^{nk}\cdot x=a^{nk}\cdot e_{i_1} a^{n k}|_{e_{i_1}}\cdot (e_{i_2}\cdots)=e_{i_1} a^{n\ell k}\cdot(e_{i_2}\cdots)=\cdots=x \qforal  k\in \bZ.
\]
This shows that every infinite path $x\in \mathsf{E}_n^\infty$ is $\bZ$-periodic in the sense of \cite{LY21}. So $(\bZ, \mathsf{E}_n)$ is periodic. 

It remains to show that if $n\nmid m$ then $(\bZ,\mathsf{E}_n)$ is aperiodic. To the contrary, assume that $(\bZ, \mathsf{E}_n)$ is periodic.
It follows from \cite[Theorem 3.7]{LY19$_2$} $(\bZ, \mathsf{E}_n)$ has a non-trivial cycline triple $(\mu, g, \nu)$. That is, 
$\mu g\cdot x=\nu x$ for all $x\in \mathsf{E}_n^\infty$ with $g\ne 0$ or $\mu\ne \nu$. 

Case (a): $|\mu|<|\nu|$. Then there is a unique $\nu'\in \mathsf{E}_n^*\setminus \mathsf{E}_n^0$ such that $\nu=\mu\nu'$. Thus $g\cdot x=\nu'x$ and so $g\cdot x(0, |\nu'|)=\nu'$
 for all $x\in \mathsf{E}_n^\infty$. This is impossible by noticing that $n$ has to be greater than $1$. 
 
 Case (b): $|\mu|>|\nu|$. Since $x\in \mathsf{E}_n^\infty$ is arbitrary, we replace $x$ with $g^{-1}\cdot x$ and then apply Case (a). 
 
 Case (c): $|\mu|=|\nu|$. Then $\mu=\nu$ and $g\cdot x=x$ for all $x\in \mathsf{E}_n^\infty$. Let $d:=\gcd(m,n)>0$. Write $m=dm_0$ and $n=dn_0$. 
 Since $n\nmid m$, we have $n_0\nmid m_0$. 
Write $x=e_{i_1}e_{i_2}\cdots$ with $i_j\in [n]$. Then 
\[
g\cdot (e_{i_1}e_{i_2}\cdots)=e_{i_1}e_{i_2}\cdots\implies 
g\cdot e_{i_1}=e_{i_1}, \ g|_{e_{i_1}}\cdot e_{i_2}=e_{i_2},\ \ldots
\]
Hence there is a sequence $\{k_i\}_{i\ge 1}\subseteq \bZ$ of (non-zero) integers such that 
\[
g=a^{k_1 n}, \ a^{k_1 m}=a^{k_2 n},\  a^{k_2 m}=a^{k_3 n}, \ a^{k_3 m}=a^{k_4 n}, \ \ldots.
\]
So 
\[
k_1 m = k_2 n,\  k_2 m=k_3n, \ldots
\]
imply
\[
k_1 m_0 = k_2 n_0,\  k_2 m_0=k_3n_0, \ldots.
\]
Thus one has that $m_0^p k_1 = n_0^p k_{p+1}$ for \textit{all} $p\ge 1$. In particular $n_0^p\!\mid\! k_1$ for \textit{all} $p\ge 1$ as $\gcd(m_0, n_0)=1$. 
But $n\!\nmid\! m$ implies $n_0>1$. So $k_1=0$ and hence $g=0$. Then $(\mu, g, \nu)=(\mu, 0, \mu)$ is a trivial cycline triple. This is a contradiction. 
\end{proof}

\begin{rem}
\label{R:Per}
Let $(G, \Lambda)$ be a self-similar $\mathsf{k}$-graph. 
As mentioned in \cite{LY21}, it is easy to see that if $\Lambda$ is periodic, then $(G, \Lambda)$ is periodic. But the converse is not true. 
Here is a class of counter-examples: It is well-known that $\SE$ is aperiodic if $n>1$. But Proposition \ref{P:kappa=1} shows that $(\bZ, \SE)$ is periodic whenever $n\!\mid\! m$.
Therefore, the periodicity of $(G, \Lambda)$ is more complicated than that of the ambient graph $\Lambda$. 
\end{rem}

We now determine all cycline triples of $(\bZ, \SE)$ when $\kappa=1$. Notice that all cycline triples are trivial if $(\bZ, \SE)$ is aperiodic by Theorem \ref{T:equper}. 
Hence it suffices to consider periodic self-similar graphs $(\bZ, \SE)$. 

When $n=1$, there is a unique infinite path. So the following is straightforward. 

\begin{lem}
\label{L:per n=1}
If $n=1$, then every triple $(\mu, a^\ell, \nu)$ is cycline. 
\end{lem}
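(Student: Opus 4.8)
The plan is to exploit the fact that when $n=1$ the infinite path space is a singleton, so the defining equality of a cycline triple reduces to a tautology. First I would record the structure: $n=1$ (hence $\kappa=1$, $n_1=1$) means there is exactly one edge, say $e$, and Lemma \ref{L:formula}(i) forces $a^\ell\cdot e = e$ for every $\ell\in\bZ$. Consequently every finite path is a power $e^p$ of $e$, and the infinite path space is the singleton $\mathsf{E}_1^\infty=\{x\}$ with $x=eee\cdots$.

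Next I would verify that $a^\ell\cdot x = x$ for all $\ell\in\bZ$. Since the $\bZ$-action on $\Lambda_\theta$ carries infinite paths to infinite paths, $a^\ell\cdot x$ lies in $\mathsf{E}_1^\infty=\{x\}$, so $a^\ell\cdot x = x$. (If one prefers to see this directly, it also follows by iterating the self-similarity identity $a^\ell\cdot(ey)=(a^\ell\cdot e)(a^\ell|_e\cdot y)$ of Definition \ref{D:ss}(1), using $a^\ell\cdot e = e$ and the uniqueness of $x$.)

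Now fix $\mu,\nu\in\mathsf{E}_1^*$ and $\ell\in\bZ$. Because $\Lambda_\theta$ has a single vertex, $s(\nu)\Lambda^\infty=\mathsf{E}_1^\infty=\{x\}$, so the definition of cycline requires checking only the single identity $\mu(a^\ell\cdot x)=\nu x$. Writing $\mu=e^{|\mu|}$ and $\nu=e^{|\nu|}$, concatenation with $x=eee\cdots$ gives $\mu x = x = \nu x$, and since $a^\ell\cdot x = x$ we conclude $\mu(a^\ell\cdot x)=x=\nu x$. Hence $(\mu,a^\ell,\nu)$ is cycline.

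I expect no genuine obstacle here: the entire content is the observation that a one-vertex, one-edge graph has a unique boundary path on which the $\bZ$-action is necessarily trivial, which collapses the cycline equation to a tautology regardless of the lengths of $\mu$ and $\nu$. The only points deserving a moment's care are that $s(\nu)\Lambda^\infty$ exhausts $\mathsf{E}_1^\infty$ (automatic in the single-vertex setting) and that the action fixes $x$, both of which I would pin down via Lemma \ref{L:formula}(i).
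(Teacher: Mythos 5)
Your proof is correct and follows exactly the route the paper intends: the paper's own justification is the one-line observation that for $n=1$ there is a unique infinite path, which is precisely the fact you establish and then use to collapse the cycline identity $\mu(a^\ell\cdot x)=\nu x$ to a tautology. You have merely written out the details the paper leaves as "straightforward."
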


\begin{prop}
\label{P:kappa1per}
Suppose that $\kappa=1$, $n>1$, and $(\bZ, \SE)$ is periodic. Then $(\mu, g, \nu)$ is cycline if and only if $\mu=\nu$ and $g=a^{\ell n}$ for some $\ell \in \bZ$. 
\end{prop}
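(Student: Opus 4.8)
The plan is to treat the two implications separately, leaning on the computations already carried out in Proposition \ref{P:kappa=1}. For the ``if'' direction, suppose $\mu=\nu$ and $g=a^{\ell n}$. Since $\kappa=1$ and $(\bZ,\SE)$ is periodic, Proposition \ref{P:kappa=1} gives $n\mid m$, say $m=n\ell_0$. I would then verify directly, exactly as in the periodicity computation of that proof, that $a^{\ell n}\cdot x=x$ for every $x=e_{i_1}e_{i_2}\cdots\in\mathsf E_n^\infty$: the leading edge is fixed because $a^{\ell n}\cdot e_{i_1}=e_{(i_1+\ell n)\bmod n}=e_{i_1}$ by Lemma \ref{L:formula}(i), while the restriction $a^{\ell n}|_{e_{i_1}}=a^{\ell m}=a^{(\ell\ell_0)n}$ is again an $n$-th multiple power, so one repeats the argument along $x$ edge by edge (matching every finite truncation). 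Hence $\mu(g\cdot x)=\mu x=\nu x$ for all $x$, so $(\mu,g,\nu)$ is cycline.

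For the converse, assume $(\mu,g,\nu)$ is cycline, i.e.\ $\mu(g\cdot x)=\nu x$ for all $x\in\mathsf E_n^\infty$, and write $g=a^k$. The first step is a length comparison forcing $|\mu|=|\nu|$, reusing Cases (a) and (b) of Proposition \ref{P:kappa=1}. If $|\mu|<|\nu|$, then $\nu=\mu\nu'$ with $\nu'\ne\mt$ and $g\cdot x=\nu'x$ for all $x$; but the leading edge of $g\cdot x$ is $a^k\cdot e_{i_1}=e_{(i_1+k)\bmod n}$, which varies with the first edge $e_{i_1}$ of $x$ since $n>1$, contradicting that it must always equal the fixed first edge of $\nu'$. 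The case $|\mu|>|\nu|$ reduces to this one by replacing $x$ with $g^{-1}\cdot x$. Thus $|\mu|=|\nu|$, which forces $\mu=\nu$ and $g\cdot x=x$ for all $x\in\mathsf E_n^\infty$.

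It remains to extract $g=a^{\ell n}$ from $g\cdot x=x$ for all $x$. Applying self-similarity, $g\cdot(e_{i_1}\sigma(x))=(g\cdot e_{i_1})(g|_{e_{i_1}}\cdot\sigma(x))$, and matching leading edges shows that $g\cdot x=x$ forces $g\cdot e_{i_1}=e_{i_1}$; since the first edge $e_{i_1}$ is arbitrary, $a^k\cdot e_s=e_{(s+k)\bmod n}=e_s$ for every $s\in[n]$, i.e.\ $n\mid k$. Writing $k=\ell n$ completes the proof. I expect no serious obstacle: the whole argument is a refinement of Proposition \ref{P:kappa=1}, and the only point genuinely requiring $n>1$ is the length-comparison step ruling out $|\mu|\ne|\nu|$ — for $n=1$ it fails, which is precisely why Lemma \ref{L:per n=1} is stated separately.
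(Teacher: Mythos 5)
Your proposal is correct and follows essentially the same route as the paper: the length-comparison argument ruling out $|\mu|\ne|\nu|$ when $n>1$, and the reduction of $g\cdot x=x$ to $g\cdot e_{i_1}=e_{i_1}$ forcing $n\mid k$, are exactly the steps the paper imports from Proposition \ref{P:kappa=1}. You merely write out in full the two parts the paper leaves implicit (the ``if'' direction via $a^{\ell n}|_{e}=a^{\ell m}$ with $n\mid m$, and the extraction of $g=a^{\ell n}$), and those details are accurate.
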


\begin{proof}
``If" part is clear. 
For the ``Only if" part, assume that $(\mu, g, \nu)$ is cycline. Then 
\[
\mu g\cdot x= \nu x \text{ for all }x\in \SE^\infty. 
\]
If $|\mu|=|\nu|$, then $\mu=\nu$ and $g\cdot x=x$ for all $x\in \SE^\infty$. As in the proof of Proposition \ref{P:kappa=1}, one can see that $g=a^{\ell n}$ for some $\ell\in \bZ$. 

If $|\mu|\ne |\nu|$, WLOG, $|\nu|> |\mu|$. Then $\nu=\mu\nu'$ for some $\nu'\in \SE^*\setminus\SE^0$ and $g\cdot x= \nu' x$. This is impossible as $n>1$.
\end{proof}

Combining \cite[Theorem 6.6, Theorem 6.13]{LY21} with Corollary \ref{C:QO} yields 

\begin{thm}
\label{T:k=1}
$\Q(\mS)$ with $\kappa=1$ satisfies UCT. It is simple iff $n\!\nmid\! m$. So it is a Kirchberg algebra iff $n\!\nmid\! m$. 
\end{thm}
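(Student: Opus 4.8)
The plan is to transport everything to the self-similar graph C*-algebra $\O_{\bZ, \mathsf{E}_n}$ and then feed it into the general structure theory of \cite{LY21}. By Corollary \ref{C:QO} we have the identification $\Q(\mS)\cong \O_{\bZ, \mathsf{E}_n}$, so it suffices to establish the three assertions for $\O_{\bZ, \mathsf{E}_n}$. Throughout, $\bZ$ is amenable and, by Lemma \ref{L:pse}, $(\bZ, \mathsf{E}_n)$ is pseudo-free, so the standing hypotheses needed to invoke the results of \cite{LY21} are in force.

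For the UCT, I would argue as follows. Since $G=\bZ$ is amenable, \cite[Theorem~6.6]{LY21} gives that $\O_{\bZ, \mathsf{E}_n}$ is nuclear and lies in the UCT class, while separability is immediate from the countable generating set. For simplicity, I would appeal to \cite[Theorem~6.13]{LY21}, which characterizes simplicity of $\O_{G,\Lambda}$ in terms of aperiodicity of $(G,\Lambda)$ (the single-vertex graph $\mathsf{E}_n$ automatically supplies the cofinality/strong-connectedness input). By Proposition \ref{P:kappa=1}, $(\bZ, \mathsf{E}_n)$ is periodic precisely when $n\mid m$, hence aperiodic precisely when $n\nmid m$; therefore $\O_{\bZ, \mathsf{E}_n}$ is simple iff $n\nmid m$. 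Note that when $n=1$ one always has $n\mid m$, so the algebra is never simple, consistent with Lemma \ref{L:per n=1}.

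The Kirchberg conclusion is then a matter of assembling the pieces: a Kirchberg algebra is a separable, nuclear, purely infinite, simple C*-algebra satisfying the UCT. Given $n\nmid m$, simplicity holds, and \cite[Theorem~6.13]{LY21} (together with the presence of $n>1$ edges at the single vertex, which produces properly infinite projections via relation (iii) of Definition \ref{D:O}) yields pure infiniteness; combined with the nuclearity, separability, and UCT already established, $\O_{\bZ, \mathsf{E}_n}$ is a Kirchberg algebra. Conversely, a Kirchberg algebra is in particular simple, so $n\nmid m$ is also necessary, giving the stated equivalence.

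The main obstacle I anticipate is the bookkeeping at the interface with \cite{LY21}: one must verify that the precise hypotheses of Theorems~6.6 and 6.13 there---stated for general self-similar $\mathsf k$-graphs---reduce correctly to our single-vertex rank $1$ situation, and in particular that their notion of aperiodicity is \emph{exactly} the periodicity dichotomy recorded in Proposition \ref{P:kappa=1}. The degenerate case $n=1$ should be flagged separately, since there $\mathsf{E}_n$ fails the aperiodicity needed for simplicity regardless of $m$; everything else is routine once the dictionary between the semigroup $\mS$ and the self-similar graph $(\bZ,\mathsf{E}_n)$ is in place.
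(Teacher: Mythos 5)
Your proposal is correct and follows exactly the route the paper takes: identify $\Q(\mS)$ with $\O_{\bZ,\mathsf{E}_n}$ via Corollary \ref{C:QO}, then combine \cite[Theorems~6.6 and 6.13]{LY21} with the periodicity dichotomy of Proposition \ref{P:kappa=1}. The extra bookkeeping you supply (pseudo-freeness, amenability of $\bZ$, the degenerate case $n=1$) is consistent with the paper, which leaves these details implicit.
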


\subsubsection{The general case}

For the general $\kappa\ge 1$, we begin with a relation between the periodicity of $(\bZ, \SE)$ and that of its restrictions on orbits. 

For simplification, let $\mathfrak N:=\lcm(n_i:1\le i\le \kappa)$. 

 \begin{prop}
 \label{P:gper}
 $(\bZ, \mathsf{E}_n)$ is periodic, if and only if the restriction $(\bZ, \mathsf{E}_{n_i})$ is periodic for each $1\le i\le \kappa$, if and only if $n_i\!\mid\! m_i$ for every $1\le i\le \kappa$. 
 \end{prop}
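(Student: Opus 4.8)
The plan is to reduce the whole statement to the single-orbit case, Proposition \ref{P:kappa=1}. The second equivalence comes for free: each induced restriction $(\bZ, \mathsf{E}_{n_i})$ is itself a self-similar graph with one orbit, having $n_i$ edges and total restriction exponent $m_i$, so Proposition \ref{P:kappa=1} applied to each $i$ gives that $(\bZ, \mathsf{E}_{n_i})$ is periodic if and only if $n_i \mid m_i$. Hence it suffices to prove the first equivalence, namely that $(\bZ, \mathsf{E}_n)$ is periodic if and only if every restriction $(\bZ, \mathsf{E}_{n_i})$ is periodic.

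For the implication ``all restrictions periodic $\Rightarrow$ the whole graph periodic'', I would exhibit a single nontrivial group element that fixes every infinite path, exactly as in the forward half of Proposition \ref{P:kappa=1}. Put $N := \mathfrak N$ and write $m_i = n_i \ell_i$, which is legitimate since each restriction being periodic forces $n_i \mid m_i$. Because $n_i \mid N$ for all $i$, Lemma \ref{L:formula}(i) gives $a^N \cdot e = e$ for every edge $e$, and Lemma \ref{L:formula}(ii) gives $a^N|_{e} = a^{N\ell_i}$ for $e$ in the $i$-th orbit, an exponent again divisible by $N$. Thus, running $a^N$ down an arbitrary path $x = e_{j_1} e_{j_2}\cdots$ via $g\cdot(\mu\nu) = (g\cdot\mu)(g|_\mu\cdot\nu)$, every edge is fixed and every restriction encountered remains of the form $a^{N'}$ with $N \mid N'$; an induction on prefixes then yields $a^N \cdot x = x$ for all $x \in \mathsf{E}_n^\infty$. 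Taking $p = q = 0$ and $g = a^N \ne 1_G$ shows every path is $\bZ$-periodic, so $(\bZ, \mathsf{E}_n)$ is periodic; equivalently, $(v, a^N, v)$ is a nontrivial cycline triple and Theorem \ref{T:equper} applies.

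For the converse I would argue the contrapositive: if $n_{i_0} \nmid m_{i_0}$ for some $i_0$, then by Proposition \ref{P:kappa=1} the restriction $(\bZ, \mathsf{E}_{n_{i_0}})$ is aperiodic, so it contains a $\bZ$-aperiodic path $x \in \mathsf{E}_{n_{i_0}}^\infty \subseteq \mathsf{E}_n^\infty$, and I claim $x$ stays aperiodic in the ambient graph. The crux --- and the only genuinely delicate point --- is a \emph{confinement} observation: since $x$ uses only edges of the $i_0$-th orbit and $\bZ$ permutes each orbit within itself, every restriction encountered maps orbit-$i_0$ edges to orbit-$i_0$ edges, so the computation of $a^\ell \cdot \sigma^q(x)$ inside $\mathsf{E}_n$ never leaves $\mathsf{E}_{n_{i_0}}^\infty$ and coincides with the action computed inside $(\bZ, \mathsf{E}_{n_{i_0}})$; the shift $\sigma$ agrees as well. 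Consequently any relation $\sigma^p(x) = a^\ell \cdot \sigma^q(x)$ with $\ell \ne 0$ or $p \ne q$ holding in $\mathsf{E}_n$ would already hold in $\mathsf{E}_{n_{i_0}}$, contradicting the aperiodicity of $x$ there. Hence $x$ is $\bZ$-aperiodic in $\mathsf{E}_n$ and $(\bZ, \mathsf{E}_n)$ is aperiodic. Once the confinement point is pinned down (a direct consequence of Lemma \ref{L:formula} and the definition of the induced structure $(\bZ, \mathsf{E}_{n_i})$), both implications follow cleanly from Proposition \ref{P:kappa=1}.
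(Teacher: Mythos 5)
Your proof is correct and follows essentially the same route as the paper: reduce to the single-orbit case via Proposition \ref{P:kappa=1}, exhibit $a^{\mathfrak N}$ as a nontrivial element fixing every infinite path when all $n_i\mid m_i$, and import an aperiodic path from an aperiodic restriction otherwise. The only difference is that you spell out the orbit-confinement argument that the paper dismisses with ``clearly,'' which is a welcome addition but not a change of method.
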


 \begin{proof}
  If there is $1\le i\le \kappa$ such that $n_i\!\nmid\! m_i$, then $(\bZ, \mathsf{E}_{n_i})$ is aperiodic by Proposition \ref{P:kappa1per}. Then clearly
  $(\bZ, \mathsf{E}_{n})$ is aperiodic.  
 
  Now let us assume that $n_i\!\mid\! m_i$ for all $1\le i\le \kappa$. Say $m_i=n_i \widetilde{m}_i$ with $0\ne \widetilde m_i\in\bZ$ for $1\le i\le \kappa$. 
  Then 
  \[
  a^{\mathfrak N} \, e_{s_1}^{i_1}\cdots e_{s_p}^{i_p}=e_{s_1}^{i_1}\cdots e_{s_p}^{i_p} \, a^{\mathfrak N\, \widetilde m_{i_1}\cdots\widetilde m_{i_p}}.
  \]
  Thus one can check that for arbitrary $x\in \mathsf{E}_n^\infty$ one has 
  $a^{\mathfrak N} \cdot x= x$. Therefore, every infinite path $x$ is $\bZ$-periodic, and so $(\bZ, \mathsf{E}_n)$ is periodic. 
 \end{proof}

We now determine all cycline triples. If $n=1$, this is provided in Lemma \ref{L:per n=1}. 

\begin{prop}
\label{P:cycline}
If $(\bZ, \SE)$ is periodic with $n>1$, then $(\mu, g, \nu)$ is cycline if and only if $\mu=\nu$ and $g=a^{\ell \mathfrak N}$ for some $\ell \in \bZ$. 
\end{prop}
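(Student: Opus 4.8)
The plan is to follow the template of Proposition \ref{P:kappa1per}, reducing the statement to an analysis of which elements of $\bZ$ fix every infinite path. The ``if'' direction is immediate: by the computation carried out in the proof of Proposition \ref{P:gper}, one has $a^{\mathfrak N}\cdot x=x$ for every $x\in\mathsf E_n^\infty$, and iterating this (applying $a^{-\mathfrak N}$ for negative powers) yields $a^{\ell\mathfrak N}\cdot x=x$ for all $\ell\in\bZ$. Since concatenation in $\mathsf E_n^*$ is cancellative, $\mu(a^{\ell\mathfrak N}\cdot x)=\mu x$ for all $x$, so every triple $(\mu,a^{\ell\mathfrak N},\mu)$ is cycline.

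For the ``only if'' direction, I would start from a cycline triple $(\mu,g,\nu)$, so that $\mu(g\cdot x)=\nu x$ for all $x\in\mathsf E_n^\infty$, and first compare lengths. If $|\mu|\ne|\nu|$, say $|\nu|>|\mu|$, then matching the first $|\mu|$ edges forces $\nu=\mu\nu'$ with $\nu'\in\mathsf E_n^*\setminus\mathsf E_n^0$, whence $g\cdot x=\nu' x$ for all $x$. But by the self-similar factorization $g\cdot(x_1x_2\cdots)=(g\cdot x_1)(g|_{x_1}\cdot(x_2\cdots))$, the first edge of $g\cdot x$ is $g\cdot x_1$, and since $g$ acts as an automorphism it permutes $\mathsf E_n^1$ bijectively; as $x_1$ varies, $g\cdot x_1$ takes all $n>1$ values and so cannot equal the fixed first edge of $\nu'$. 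This contradiction—exactly parallel to Case (a) of Proposition \ref{P:kappa=1}—forces $|\mu|=|\nu|$, and then matching edges gives $\mu=\nu$ and $g\cdot x=x$ for all $x$.

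It remains to determine all $g=a^t$ satisfying $a^t\cdot x=x$ for every $x$. Writing $x=e_{s_1}^{i_1}e_{s_2}^{i_2}\cdots$ and invoking Lemma \ref{L:formula}(i), fixing the first edge forces $n_{i_1}\mid t$; since the first edge may be chosen in any orbit, I obtain $n_i\mid t$ for every $1\le i\le\kappa$. Using periodicity write $m_i=n_i\widetilde m_i$; then Lemma \ref{L:formula}(ii) gives $a^t|_{e_s^i}=a^{t\widetilde m_i}$ whenever $n_i\mid t$, and iterating shows that the restriction accumulated after an initial segment lying in orbits $i_1,\dots,i_p$ is $a^{t\,\widetilde m_{i_1}\cdots\widetilde m_{i_p}}$, so that fixing the next edge requires $n_{i_{p+1}}\mid t\,\widetilde m_{i_1}\cdots\widetilde m_{i_p}$. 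The step I expect to be the main obstacle is precisely to distill this infinite family of divisibility conditions—ranging over all finite orbit-sequences—into the exact period and to match it against $\mathfrak N=\prod_i n_i$; this is the delicate bookkeeping, and it is where the hypothesis $n_i\mid m_i$ must be used in an essential way, since one must understand how the restriction exponents propagate as the path passes between distinct orbits. Once the set of admissible exponents is identified with $\mathfrak N\bZ$, we conclude $g=a^{\ell\mathfrak N}$, completing the proof.
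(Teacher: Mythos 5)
Your overall architecture is the same as the paper's: the ``if'' direction via the computation in the proof of Proposition \ref{P:gper}, and the reduction of the ``only if'' direction to the case $\mu=\nu$, $g\cdot x=x$ for all $x$, by the length-comparison argument of Proposition \ref{P:kappa=1}. Those parts are fine. The problem is that you stop exactly where the only substantive content lies: you announce that distilling the divisibility conditions into $g\in\{a^{\ell\mathfrak N}\}$ is ``the main obstacle'' and ``delicate bookkeeping,'' and then you do not do it. A proof that defers its key step is not a proof. (The paper is equally terse here --- it says ``it is now not hard to see that $g=a^{\ell\mathfrak N}$'' --- but that does not discharge you from carrying the step out.)

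More seriously, the deferred step cannot be completed in the form you are aiming for. Your own first observation --- choosing the initial edge from each orbit forces $n_i\mid t$ for every $i$ --- yields only $\lcm(n_1,\ldots,n_\kappa)\mid t$, and this condition is already \emph{sufficient}: under the periodicity hypothesis $m_i=n_i\widetilde m_i$, Lemma \ref{L:formula}(ii) gives $a^t|_{e_s^i}=a^{t\widetilde m_i}$ whenever $n_i\mid t$, so each restriction multiplies the exponent by an integer and every divisibility relation satisfied at the first edge persists along the entire path. Hence $\{t\in\bZ: a^t\cdot x=x \text{ for all } x\in\mathsf E_n^\infty\}=\lcm(n_1,\ldots,n_\kappa)\,\bZ$, which contains $\mathfrak N\bZ=\bigl(\prod_i n_i\bigr)\bZ$ properly unless the $n_i$ are pairwise coprime. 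Concretely, take $\kappa=2$, $n_1=n_2=2$, $m_1=m_2=2$: then $a^2\cdot e_s^i=e_s^i$ and $a^2|_{e_s^i}=a^2$ for every edge, so $(\mu,a^2,\mu)$ is cycline although $a^2\notin\{a^{4\ell}:\ell\in\bZ\}$. So the set of admissible exponents is \emph{not} $\mathfrak N\bZ$ in general; the bookkeeping you postpone, when actually done, identifies it with $\lcm(n_i)\bZ$. You should either prove the statement with $\mathfrak N$ replaced by $\lcm(n_1,\ldots,n_\kappa)$, or add the hypothesis that the $n_i$ are pairwise coprime (in which case the two coincide); as written, the target of your reduction is unreachable.
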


\begin{proof}
``If" part is clear. For the ``Only if" part, assume that $(\mu, g, \nu)$ is cycline. Then 
\[
\mu g\cdot x= \nu x \text{ for all }x\in \SE^\infty. 
\]
If $|\mu|=|\nu|$, then $\mu=\nu$ and $g\cdot x=x$ for all $x\in \SE^\infty$. It is now not hard to see that $g=a^{\ell \mathfrak N}$ for some $\ell\in \bZ$. 

If $|\mu|\ne |\nu|$, WLOG, $|\nu|> |\mu|$. Then $\nu=\mu\nu'$ for some $\nu'\in \SE^*\setminus\SE^0$ and $g\cdot x= \nu' x$. This is impossible as $n>1$. 
\end{proof}

Combining \cite[Theorem 6.6, Theorem 6.13]{LY21} with Corollary \ref{C:QO} yields 

\begin{thm}
\label{T:rk1Kir}
$\O_{\bZ, \mathsf{E}_n}$ satisfies UCT. It is simple iff $n_i\!\nmid\! m_i$ for some $1\le i\le \kappa$. So it is a Kirchberg algebra iff $n_i\!\nmid \! m_i$ for some $1\le i\le \kappa$. 
\end{thm}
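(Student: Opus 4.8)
The plan is to deduce all three assertions from the two structural results \cite[Theorem~6.6]{LY21} and \cite[Theorem~6.13]{LY21}, once their hypotheses have been checked, and then to convert the abstract (a)periodicity condition appearing there into the concrete arithmetic condition ``$n_i\nmid m_i$ for some $i$'' by means of Proposition~\ref{P:gper}. First I would record that the standing hypotheses of the cited machinery are all met in our situation: $\mathsf{E}_n$ is a single-vertex graph, hence trivially strongly connected; $\bZ$ is amenable; and by Lemma~\ref{L:pse} the standing assumption $(\dag)$ guarantees that $(\bZ,\mathsf{E}_n)$ is pseudo-free. Corollary~\ref{C:QO} plays the auxiliary role of identifying $\O_{\bZ,\mathsf{E}_n}$ with the boundary quotients $\Q(\mS)$ and $\Q(\mathsf{S}_{\bZ,\mathsf{E}_n})$, so that whatever we prove about $\O_{\bZ,\mathsf{E}_n}$ transfers to the semigroup side.

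For the UCT claim I would invoke \cite[Theorem~6.6]{LY21}: since $\bZ$ is amenable, $\O_{\bZ,\mathsf{E}_n}$ is nuclear and satisfies the UCT. Separability is immediate, as the algebra is generated by a countable family of elements. This already secures three of the four properties needed to recognize a Kirchberg algebra.

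For simplicity I would apply \cite[Theorem~6.13]{LY21}, which under the hypotheses above asserts that $\O_{\bZ,\mathsf{E}_n}$ is simple precisely when $(\bZ,\mathsf{E}_n)$ is aperiodic. By Proposition~\ref{P:gper}, $(\bZ,\mathsf{E}_n)$ is periodic if and only if $n_i\mid m_i$ for every $1\le i\le\kappa$; negating this, it is aperiodic if and only if $n_i\nmid m_i$ for some $i$. Hence $\O_{\bZ,\mathsf{E}_n}$ is simple iff $n_i\nmid m_i$ for some $i$, which is the second assertion.

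Finally, for the Kirchberg conclusion it remains only to pin down pure infiniteness in the simple case. The key observation is that $n_i\nmid m_i$ forces $n_i\ge 2$, whence $n=\sum_{i=1}^\kappa n_i>1$; in this genuinely expansive regime \cite[Theorem~6.13]{LY21} provides not merely simplicity but pure infiniteness of the simple algebra. Combining this with the nuclearity and the UCT established above and with the simplicity just obtained, $\O_{\bZ,\mathsf{E}_n}$ is a Kirchberg algebra exactly when it is simple, i.e.\ iff $n_i\nmid m_i$ for some $i$. I expect the main obstacle to be not a hard estimate but the bookkeeping of verifying that the concrete system $(\bZ,\mathsf{E}_n)$ satisfies every standing hypothesis of the general framework of \cite{LY21} and, in particular, that the simple/purely-infinite dichotomy there is genuinely available under condition $(\dag)$; once that is confirmed, the arithmetic translation via Proposition~\ref{P:gper} is routine.
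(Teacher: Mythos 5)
Your proposal follows exactly the paper's route: the paper's entire proof is the single line ``Combining \cite[Theorem 6.6, Theorem 6.13]{LY21} with Corollary \ref{C:QO} yields,'' and you have simply spelled out the hypothesis-checking (pseudo-freeness via Lemma \ref{L:pse} under $(\dag)$, amenability of $\bZ$) and the translation of aperiodicity into $n_i\nmid m_i$ for some $i$ via Proposition \ref{P:gper}. The argument is correct and matches the paper's intent, with your observation that $n_i\nmid m_i$ forces $n_i\ge 2$ (hence $n>1$) being a sound way to secure pure infiniteness in the simple case.
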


\begin{rem}
\label{R:nmmn}
It is well-known that the roles of $n$ and $m$ in Baumslag-Solitar \textit{groups} $\BS(n,m)$ are symmetric in the sense of $\BS(n,m)\cong \BS(m,n)$. So $\ca(\BS(n,m))\cong \ca(\BS(m,n))$. 
However, the symmetry is lost for BS semigroups. For instance, If $0<n\ne m\in \bN$ satisfies $n\!\mid\!m$, then $\Q(\BS^+(n,m))$ is not simple while $\Q(\BS^+(m,n))$ is simple. 
\end{rem}

\subsection{Cartan subalgebras of $\O_{\bZ, \SE}$}
We begin with the definition of Cartan subalgebras. Let $\B$ be an abelian C*-subalgebra of a given C*-algebra $\A$. 
$\B$ is called a \textit{Cartan subalgebra} in $\A$ if 
\begin{itemize}
\item[(i)] $\B$ contains an approximate unit in $\A$;
\item[(ii)] $\B$ is a MASA;
\item[(iii)] $\B$ is regular: the normalizer set $N(\B)=\{x\in \A: x\B x^*\cup x^*\B x\subseteq \B\}$ generates $\A$;
\item[(iv)] there is a faithful conditional expectation $\E$ from $\A$ onto $\B$. 
\end{itemize}

In this subsection, we show that there is a canonical Cartan subalgebra for each $\O_{\bZ, \mathsf E_n}$. It is closely related to the fixed point algebra $\O_{\bZ, \SE}^\gamma$ of the 
gauge action $\gamma$. However, there is an essential difference between the cases of  $n=1$ and $n>1$.

\subsubsection{The case of $n=1$} 
The case of $n=1$ is a special case of Subsection \ref{S:ni=1} below with $\mathsf k=1$ (i.e., rank 1), which 
does not use any results from this section. So we record the result below just for completeness.

In what follows, to simplify our writing, let us set $\F:=\O_{\bZ, \mathsf E_n}^\gamma$, and $\F'$ to be the (relative) commutant of $\F$ in $\O_{\bZ, \mathsf E_n}$.

\begin{prop} 
\label{P:Cartan n=1}
Keep the above notation. Then 
$\F'=\ol\spn\{ s_{e^p} a^\ell s_{e^q}^*: m^p=m^q, \ell \in \bZ\}$ and $\F'$ is a Cartan subalgebra of $\O_{\bZ, \mathsf E_1}$. 
\end{prop}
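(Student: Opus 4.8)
The plan is to exploit the fact that, for $n=1$, the algebra $\O_{\bZ,\mathsf E_1}$ is a genuine crossed product of the abelian fixed-point algebra $\F$ by the degree-raising unitary $s_e$, and to read off the relative commutant from the induced dynamics. First I would record the structure. Since $\mathsf E_1$ has a single edge $e$, relation (iii) forces $s_e s_e^*=1$, so $s_e$ is a \emph{unitary}; and relation (iv) becomes $u_a s_e=s_e u_a^{m}$ with $m\ne 0$ by $(\dag)$. Thus $\O_{\bZ,\mathsf E_1}$ is generated by the two unitaries $s_e,u_a$ subject to $u_a s_e=s_e u_a^{m}$, i.e.\ $\O_{\bZ,\mathsf E_1}\cong \ca(\BS(1,m))$. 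The elements $s_e^{p}u_a^{\ell}s_e^{*p}$ correspond to the group elements $\ell m^{-p}\in\bZ[1/m]$, so $\F=\ol\spn\{s_e^{p}u_a^{\ell}s_e^{*p}\}\cong \ca(\bZ[1/m])\cong C(X)$, where $X:=\wh{\bZ[1/m]}$ is the $m$-adic solenoid; since $\bZ[1/m]$ is torsion-free, $X$ is \emph{connected}. Conjugation by $s_e$ restricts to an automorphism $\alpha$ of $\F$ dual to a homeomorphism $T$ of $X$.

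Next I would compute $\F'$ by reducing to the gauge grading. Because $\F$ is gauge-invariant, so is its relative commutant $\F'$; hence for $x\in\F'$ every homogeneous component $\Phi_{n}(x)$ again lies in $\F'$, and $x$ is recovered from these components by Fej\'er summation. It therefore suffices to decide which degree-$n$ elements $y\,s_e^{n}$ (with $y\in\F$) commute with $\F$. Testing against the single element $u_a\in\F$ and using $s_e^{n}u_a=w\,s_e^{n}$ with $w:=s_e^{n}u_a s_e^{*n}\in\F$, commutation is equivalent to $y\,w=u_a\,y$; as $\F$ is abelian this reads $y(w-u_a)=0$.

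The heart of the argument is then a support computation on $X$. Under $\F\cong C(X)$ the unitaries $u_a$ and $w$ become the evaluation characters $\chi\mapsto\chi(1)$ and $\chi\mapsto\chi(m^{-n})$, so $y$ must vanish off the closed subgroup $\{\chi\in X:\chi(1-m^{-n})=1\}$. This subgroup is proper precisely when $1-m^{-n}\ne 0$, i.e.\ when $m^{n}\ne 1$; since a proper closed subgroup of a connected compact group has empty interior, a continuous $y$ supported there must be $0$. Conversely, when $m^{n}=1$ we have $\alpha^{n}=\id$, so $s_e^{n}$ commutes with $\F$ and every $y\,s_e^{n}$ lies in $\F'$. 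Writing $H:=\{n\in\bZ:m^{n}=1\}$, this yields $\F'=\ol\spn\{s_e^{p}u_a^{\ell}s_e^{*q}:m^{p-q}=1\}=\ol\spn\{s_{e^p}a^{\ell}s_{e^q}^{*}:m^{p}=m^{q}\}$, the last equality because $m\ne 0$. I expect this connectedness/fixed-point step, made uniform across $|m|\ge 2$, $m=1$, and $m=-1$ (where $X=\bT$), to be the main obstacle.

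Finally I would verify the four Cartan axioms. Since $n\in H$ forces $\alpha^{n}=\id$, each $s_e^{n}$ ($n\in H$) commutes with $\F$, so $\F'\cong\F\otimes\ca(H)$ is \emph{abelian} and contains $1$, giving (i). For (ii), from $\F\subseteq\F'$ and inclusion-reversal of relative commutants we get $(\F')'\subseteq\F'$, while abelianness gives $\F'\subseteq(\F')'$; hence $(\F')'=\F'$ and $\F'$ is a MASA. For (iii), $\O_{\bZ,\mathsf E_1}$ is generated by $s_e$ and $u_a$, and both normalize $\F'$ (conjugation by $s_e$ preserves degrees and the condition $m^{p-q}=1$, while $u_a\in\F'$), so $\F'$ is regular. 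For (iv), put $K:=\wh{\bZ/H}\subseteq\bT$ and $E(x):=\int_{K}\gamma_{t}(x)\,dt$; averaging a compact-group gauge action gives a faithful conditional expectation onto its fixed-point algebra, which is exactly the degree-$H$ part $\F'$. This completes the proof; alternatively one may simply invoke the rank-$1$ instance of Subsection~\ref{S:ni=1}.
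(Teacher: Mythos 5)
Your argument is correct, and it takes a genuinely different route from the paper's. The paper does not actually prove the $n=1$ case in Section~\ref{S:R1}: it defers to Subsection~\ref{S:ni=1} with $\mathsf k=1$, where $\F'$ is computed by a direct commutator calculation with trigonometric polynomials in $\ca(u_a)\cong\rC(\bT)$ (Lemma~\ref{L:F'}), the MASA property is another such calculation (Lemma~\ref{L:F'MASA}), and the Cartan property comes either from the one-parameter action $\alpha_t(s_\mu)=t^{\ln a|_\mu}s_\mu$ when the restriction exponents are positive (Theorem~\ref{T:F'Cartan}) or, in general, from the identification with a group C*-algebra and the immediately-centralizing-subgroup criterion of \cite{DGNRW20} (Theorem~\ref{T:F'Cargen}). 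You instead work in the group/crossed-product picture from the outset: $\O_{\bZ,\mathsf E_1}\cong\ca(\BS(1,m))$, $\F\cong\ca(\bZ[1/m])=\rC(X)$ with $X$ a \emph{connected} solenoid, and the computation of $\F'$ becomes a support argument (a proper closed subgroup of a compact connected group has empty interior, so the coefficient $y$ with $y(w-u_a)=0$ must vanish unless $m^{n}=1$). Your MASA step (from $\F\subseteq\F'$ and abelianness of $\F'$ one gets $(\F')'=\F'$) and your conditional expectation (averaging the gauge action over $H^\perp\subseteq\bT$ with $H=\{n: m^{n}=1\}$) are arguably cleaner than the paper's, treat $m>0$ and $m<0$ uniformly, and avoid importing \cite{DGNRW20}; the trade-off is that the paper's polynomial computation needs no input about duals of subgroups and transfers verbatim to the higher-rank setting. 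Two small points you should make explicit: the isomorphism $\F\cong\ca(\bZ[1/m])$ (i.e., injectivity of the canonical map from the full group C*-algebra of the subgroup) rests on amenability of $\BS(1,m)$ together with pseudo-freeness, exactly as in the paper's proof of Theorem~\ref{T:F'Cargen}; and the assertion $\F'\cong\F\otimes\ca(H)$ is more than you need and not justified as stated --- all that is used is that $\F'$ is generated by two commuting abelian subalgebras, hence is abelian.
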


It is worth noticing that there are three possible cases for $\F'$:
\begin{itemize}
\item
If $m=1$, then $\F'=\ol\spn\{ s_{e^p} u_{a^\ell} s_{e^q}^*: p,q\in \bN, \ell \in \bZ\}=\O_{\bZ, \mathsf E_1} \cong \rC(\bT^2)$. 

\item
If $m=-1$, then $\F'=\ol\spn\{ s_{e^p} u_{a^\ell} s_{e^q}^*: p,q\in \bN \text{ with }p-q\in 2\bZ, \ell \in \bZ\}$. 

\item 
If $m\ne \pm 1$, then $\F'=\F$. 
\end{itemize}


\subsubsection{The case of $n>1$}
Recall that $\mathfrak N =\lcm(n_i:1\le i\le \kappa)$ and $n=\sum\limits_{i=1}^\kappa n_i$. 
\begin{prop}
\label{P:Cartan n>1}
If $n>1$, then the cycline C*-subalgebra $\M:=\ca(s_\mu u_{a^{\ell\mathfrak{N}}} s_\mu^*: \mu \in \SE^*, \ell\in \bZ)$ is a MASA in $\O_{\bZ, \SE}$. 
\end{prop}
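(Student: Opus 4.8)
The plan is to verify the four defining properties of a Cartan subalgebra for $\M$, leaning on the complete list of cycline triples from Proposition \ref{P:cycline}. Property (i) is immediate, since the trivial cycline triple $(v,1_\bZ,v)$ gives $I\in\M$, so $\M$ is unital. For abelian-ness I would compute the product of two generators $s_\mu u_{a^{\ell\mathfrak{N}}}s_\mu^*$ and $s_\nu u_{a^{k\mathfrak{N}}}s_\nu^*$ directly: if $\mu,\nu$ are incomparable the product vanishes from either side, while if, say, $\nu=\mu\nu'$, one uses the facts extracted from the proof of Proposition \ref{P:gper}, namely $a^{\ell\mathfrak{N}}\cdot\nu'=\nu'$ and $a^{\ell\mathfrak{N}}|_{\nu'}=a^{\ell\mathfrak{N}M}$ where $M=\prod\widetilde m_i$ runs over the edges of $\nu'$, together with relation (iv) of Definition \ref{D:O}, to reduce both orders to $s_\nu u_{a^{(\ell M+k)\mathfrak{N}}}s_\nu^*$. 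Hence $\M$ is abelian.

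For regularity (iii), Proposition \ref{P:genO}(i) tells us the standard generators $w=s_\mu u_{a^\ell}s_\nu^*$ span a dense $*$-subalgebra, so it suffices to place each of them in $N(\M)$. Conjugating a generator $s_\alpha u_{a^{k\mathfrak{N}}}s_\alpha^*\in\M$ by $w$ and simplifying with relation (iv) of Definition \ref{D:O} (in the nonvanishing case $\alpha=\nu\alpha'$, rewriting $u_{a^\ell}s_{\alpha'}=s_{a^\ell\cdot\alpha'}u_{a^\ell|_{\alpha'}}$) collapses $w\,(s_\alpha u_{a^{k\mathfrak{N}}}s_\alpha^*)\,w^*$ to $s_{\mu\beta}\,u_{h a^{k\mathfrak{N}}h^{-1}}\,s_{\mu\beta}^*$ with $\beta=a^\ell\cdot\alpha'$ and $h=a^\ell|_{\alpha'}$. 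Since $\bZ$ is abelian, $h a^{k\mathfrak{N}}h^{-1}=a^{k\mathfrak{N}}$, so the output is again a cycline generator; the remaining length cases are analogous or give $0$, and $w^*\M w\subseteq\M$ follows because $w^*$ is itself a standard generator. Thus every $w$ normalizes $\M$ and (iii) holds.

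The crux is (ii) and (iv), which I would obtain from the groupoid model of \cite{LY21}: under $\O_{\bZ,\SE}\cong \ca(\G)$ with $\G$ a Hausdorff étale groupoid, the cycline algebra $\M$ is identified with $\ca(\Iso(\G)^{\circ})$, the C*-algebra of the interior of the isotropy bundle. Here Proposition \ref{P:cycline} does the essential work: it forces the only cycline triples to be $(\mu,a^{\ell\mathfrak{N}},\mu)$, so that $\Iso(\G)^{\circ}$ is exactly the clopen abelian subgroupoid they generate, and, via Theorem \ref{T:equper}, that the dynamics off this subgroupoid is aperiodic, i.e. $\G$ is effective relative to $\Iso(\G)^{\circ}$. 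Since $\G^{(0)}\subseteq\Iso(\G)^{\circ}$ is clopen, restriction $f\mapsto f|_{\Iso(\G)^{\circ}}$ on $C_c(\G)$ extends to a conditional expectation onto $\M$ that is faithful, because composing it with the faithful expectation $\ca(\G)\to C_0(\G^{(0)})$ recovers the latter; this gives (iv). The abelian-isotropy Cartan theorem of Brown–Nagy–Reznikoff–Sims–Williams then applies, its hypotheses holding here because $G=\bZ$ forces all isotropy to be abelian and relative effectiveness was just noted, yielding that $\ca(\Iso(\G)^{\circ})=\M$ is maximal abelian, which is (ii).

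I expect the main obstacle to be the identification $\M=\ca(\Iso(\G)^{\circ})$ together with the MASA property in this non-locally-faithful setting: this is exactly the place where local faithfulness is invoked in \cite{LY19$_2$}, and its replacement here must be the full determination of the cycline triples in Proposition \ref{P:cycline}, which simultaneously certifies that no periodicity escapes $\M$ and that the complementary dynamics is effective.
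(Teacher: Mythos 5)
Your proposal follows essentially the same route as the paper: abelianness of $\M$ by direct computation on the generators, identification of $\M$ with $\ca(\Iso(\G_{\bZ,\SE})^\circ)$ via the determination of the cycline triples in Proposition \ref{P:cycline}, and an appeal to the Brown--Nagy--Reznikoff--Sims--Williams result to obtain the MASA, regularity and faithful-expectation properties at once. The only substantive step you assert rather than verify is that $\Iso(\G_{\bZ,\SE})^\circ$ is \emph{closed} --- a hypothesis of \cite[Corollary 4.5]{BNRSW16} --- which the paper checks explicitly by exhibiting the complement $\bigcup_{m\in\bZ,\,\mu\ne\nu}Z(\mu,a^m,\nu)\cup\bigcup_{m\notin\mathfrak{N}\bZ}Z(\mu,a^m,\mu)$ as an open set.
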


\begin{proof}
We first show that $\M$ is abelian. Compute 
\begin{align*}
&(s_\mu u_{a^{K\mathfrak{N}}}s_\mu^*)(s_\nu u_{a^{L\mathfrak{N}}}s_\nu^*)\\
&=
\begin{cases}
s_\mu u_{a^{(K+L)\mathfrak{N}}}s_\mu^*& \text{if }\mu=\nu,\\
s_\mu u_{a^{K\mathfrak{N}}}s_{\nu'} u_{a^{L\mathfrak{N}}}s_\nu^*=s_\mu s_{\nu'} u_{a^{K\mathfrak{N}}|_{\nu'}}u_{a^{L\mathfrak{N}}}s_\nu^*
=s_\nu u_{a^{K\mathfrak{N}}|_{\nu'}}u_{a^{L\mathfrak{N}}}s_\nu^* & \text{if }\nu=\mu\nu',\\
s_\mu u_{a^{K\mathfrak{N}}}s_{\mu'}^* u_{a^{L\mathfrak{N}}}s_\nu^*=s_\mu u_{a^{K\mathfrak{N}}} u_{(a^{-L\Bn}|_{a^{L\mathfrak{N}}\cdot\mu'})^{-1}}s_{\mu'}^* s_\nu^*
=s_\mu u_{a^{K\mathfrak{N}}} u_{a^{L\mathfrak{N}}|_{\mu'}}s_{\mu}^* & \text{if }\mu=\nu\mu',\\
0&\text{otherwise}.
\end{cases}
\end{align*}
Similar calculations yield
\begin{align*}
(s_\nu u_{a^{L\mathfrak{N}}}s_\nu^*)(s_\mu u_{a^{K\mathfrak{N}}}s_\mu^*)
=
\begin{cases}
 s_\nu u_{a^{(K+L)\mathfrak{N}}}s_\nu^* &\text{ if }\mu=\nu,\\
 s_\nu u_{a^{L\mathfrak{N}}} u_{a^{K\mathfrak{N}}|_{\nu'}}s_{\nu}^*& \text{ if }\nu=\mu\nu',\\
s_\mu u_{a^{L\mathfrak{N}}|_{\mu'}}u_{a^{K\mathfrak{N}}}s_\mu^*& \text{ if }\mu=\nu\mu',\\
0&\text{ otherwise}.
\end{cases}
\end{align*}
Thus $\M$ is abelian.

As in \cite{LY19$_2$}, let $\G_{\bZ, \SE}$ be the groupoid associated to the self-similar graph $(\bZ, \SE)$. It follows from \cite[Lemma 5.2]{LY19$_2$} and Proposition \ref{P:cycline} that $\text{Iso}(\G_{\bZ, \SE})^\circ=\bigcup\limits_{\mu\in \SE^*, \ell\in \bZ} Z(\mu, a^{\ell\mathfrak{N}}, \mu)$. 
Also notice that $\M\cong \ca(\text{Iso}(\G_{\bZ, \SE})^\circ)$. So $\ca(\text{Iso}(\G_{\bZ, \SE})^\circ)$ is abelian. 
Hence, by \cite[Corollary 5.4]{CRST21}, $\M$ is a MASA. 
\end{proof}

\begin{rem}
By Lemma \ref{L:per n=1}, when $n=1$, the cycline subalgebra of $\O_{\bZ, \mathsf{E}_1}$ coincides with $\O_{\bZ, \mathsf{E}_1}$, which is generally not abelian. So Proposition \ref{P:Cartan n>1} does not hold true for $n=1$. 
\end{rem}

\begin{rem}
Keep the same notation in the proof above. If $\text{Iso}(\G_{\bZ, \SE})^\circ$ is closed, then applying \cite[Corollary 4.5]{BNRSW16} one can conclude that $\M$ is Cartan in $\O_{\bZ, \SE}$. 
But, unfortunately, 
$\text{Iso}(\G_{\bZ, \SE})^\circ$ needn't be closed in general. 
\end{rem}
\subsection{Some old examples revisited}

Recall the flip (single-vertex) rank 2 graphs 
\[
\Lambda_{\text{flip}}=\langle\Be_i, \Bf_j: \Be_i \Bf_j = \Bf_i \Be_j, i, j\in [n]\rangle^+ \ (n\ge 2)
\]
and the square rank 2 graph 
\[
\Lambda_{\text{square}} = \langle \Bx_i, \By_j: \Bx_i \By_j=\By_{i+1} \Bx_j, i, j\in [2] \rangle^+. 
\]

\begin{eg}
\label{eg:m=n}
In \cite{CaHR16}, it is shown that $\Q(\BS^+(n,n))\cong \rC(\bT) \otimes \O_n$. From what we have obtained so far, we can prove this by relating to rank 2 graphs. In fact, we have 
\[
\Q(\BS^+(n,n))\cong \rC(\bT) \otimes \O_n\cong \O_{\Lambda_{\text{flip}}}.
\]
To see this, we construct an explicit isomorphism from  $\O_{\Lambda_{\text{flip}}}$ onto $\Q(\BS^+(n,n))$. Let $\pi:  \O_{\Lambda_{\text{flip}}}\to \Q(\BS^+(n,n))$ be the homomorphism determined by 
\begin{align}
\label{E:pi}
s_{\Be_i}\mapsto E_i:=s_{e_i}, \ s_{\Bf_j}\mapsto F_j:=u_{a^n} s_{e_j} \ (i, j\in [n]).
\end{align}
It is easy to see that $E_iF_j=F_iE_j$ as $u_{a^n}$ is in the center of $\Q(\BS^+(n,n))$. 
Also $\pi$ is surjective as from $a e_i=e_{i+1}$ $(0\le i\le n-2$) and $a e_{n-1}=e_0 a^n$ one has 
\begin{align*}
 \sum_{i=0}^{n-1}u_a s_{e_i}  s_{e_i}^*
=\sum_{i=0}^{n-2} s_{e_{i+1}} s_{ e_i}^* + s_{e_0} u_{a^n} s_{e_{n-1}}^*
=\sum_{i=0}^{n-2} s_{e_{i+1}} s_{e_i}^* + u_{a^n} s_{e_0} s_{e_{n-1}}^*
=\sum_{i=0}^{n-2} s_{e_{i+1}} s_{e_i}^* + F_0 s_{e_{n-1}}^*. 
\end{align*}
Conversely, define $\rho:  \Q(\BS^+(n,n))\to  \O_{\Lambda_{\text{flip}}}$ by 
\[
s_{e_i}\mapsto s_{\Be_i}, \ u_a\mapsto \sum_{i=0}^{n-2} s_{\Be_{i+1}}s_{\Be_i}^* + s_{\Bf_0} s_{\Be_{n-1}}^*. 
\]
Then $\rho$ determines a homomorphism. Also one can check that 
$\pi$ and $\rho$ are the inverse to each other. Therefore one has $ \Q(\BS^+(n,n))\cong  \O_{\Lambda_{\text{flip}}}$, 
which is also isomorphic to  $\rC(\bT) \otimes \O_n$ by \cite{DY091}.
\end{eg}

\begin{eg}
\label{eg:squareflip}
In this example, through $\BS^+(2,2)$, we are able to show that $\O_{\Lambda_{\text{square}}}\cong  \O_{\Lambda_{\text{flip}}}$, 
which seems unclear in \cite{DY091} although both 
$\O_{\Lambda_{\text{flip}}}$ and $\O_{\Lambda_{\text{square}}}$ are well-studied there.  

Let $W:=s_{\Be_1} s_{\Be_0}^*+ s_{\Bf_0} s_{\Be_1}^*$. Then $W^2=s_{\Bf_0} s_{\Be_0}^*+s_{\Bf_1} s_{\Be_1}^*$. Define 
$\pi: \O_{\Lambda_{\text{square}}}\to \O_{\Lambda_{\text{flip}}}$ via
\[
s_{\Bx_0}\mapsto s_{\Be_0},\ s_{\Bx_1}\mapsto s_{\Be_1} W^*,\ s_{\By_0}\mapsto W s_{\Be_0},\  s_{\By_1}\mapsto W s_{\Be_1} W^*.  
\]
Then one can verify that $\pi$ is a homomorphism.

Let $F:=s_{\By_1} s_{\Bx_1}^* + s_{\By_0} s_{\Bx_0}^*$. Then $F^2=\sum_{i, j\in [2]} s_{\By_i\By_j} s_{\Bx_{(i+1)}\Bx_j}^*$. 
Let $\rho: \O_{\Lambda_{\text{flip}}}\to \O_{\Lambda_{\text{square}}}$ be defined as 
\[
s_{\Be_0}\mapsto s_{\Bx_0},\ s_{\Be_1}\mapsto s_{\Bx_1} F,\ 
s_{\Bf_0}\mapsto  F^2 s_{\Bx_0}=s_{\Bx_0} F^2,\  s_{\Bf_1}\mapsto F^2 s_{\Bx_1} F=s_{\Bx_1} F^3 .  
\]
Then $\rho$ is a homomorphism. Moreover, $\pi$ and $\rho$ are the inverse to each other, and $\rho(W)=F$ and $\pi(F)=W$. 
\end{eg}

 
\section{Rank $\mathsf k$ case:  more than higher rank Baumslag-Solitar semigroups}
\label{S:rkBS}

In this section, we first propose a notion of higher rank BS semigroups $\Lambda_\theta(\mathfrak n, \mathfrak m)$. We then briefly describe how 
higher rank BS semigroups relate to Furstenberg’s $\times p, \times q$ conjecture. Our main focus here are two cases -- 
$\Lambda_{\mathsf d}(\mathfrak n, \mathds 1_{\mathsf k})$ and $\Lambda_{\mathsf d}(\mathds 1_{\mathsf k}, \mathfrak m)$. 
For $\Lambda_{\mathsf d}(\mathfrak n, \mathds 1_{\mathsf k})$, it is related to products of odometers studied in \cite{LY19$_1$}. Applying some results in \cite{LY19$_1$, LY19$_2$}, 
one can easily characterize the simplicity of $\O_{\Lambda_{\mathsf d}(\mathfrak n, \mathds 1_{\mathsf k})}$ 
and see that the cycline subalgebra is Cartan in $\O_{\Lambda_{\mathsf d}(\mathfrak n, \mathds 1_{\mathsf k})}$. But, here, we first show the fixed point algebra $\F$ of the gauge 
action $\gamma$ is a Bunce-Deddens algebra, and so $\F$ has a unique faithful tracial state $\tau$. Then composing with the conditional expectation $\Phi$
from $\O_{\Lambda_{\mathsf d}(\mathfrak n, \mathds 1_{\mathsf k})}$ onto $\F$ yields a state $\omega=\tau\circ\Phi$. We then study the associated von Neumann algebra
$\pi_\omega(\O_{\Lambda_{\mathsf d}(\mathfrak n, \mathds 1_{\mathsf k})})''$ in the same vein of \cite{Yan10, Yan12}. More precisely, we provide some characterizations 
of when $\pi_\omega(\O_{\Lambda_{\mathsf d}(\mathfrak n, \mathds 1_{\mathsf k})})''$ is a factor, and further obtain its type. 
For $\Lambda_{\mathsf d}(\mathds 1_{\mathsf k}, \mathfrak m)$, it is intimately related to Furstenberg’s $\times p, \times q$ conjecture. 
In this case, we obtain a canonical Cartan for $\O_{\Lambda_{\mathsf d}(\mathds 1_{\mathsf k}, \mathfrak m)}$, which is generally a proper 
subalgebra of its cycline subalgebra. We will continue studying $\O_{\Lambda_{\mathsf d}(\mathds 1_{\mathsf k}, \mathfrak m)}$ and its relative(s) in a forthcoming paper. 

\subsection{Higher rank Baumslag-Solitar semigroups}

Consider $\mathsf{k}$ given self-similar graphs $(\bZ, \mathsf{E}_{n_i})$ with $\mathsf{E}_{n_i}=\{\Bx_\fs^i: s\in [n_i]\}$ ($1\le i\le \mathsf k$). Suppose that $\Bx^i_\fs$'s  
satisfy the commutation relations $\theta_{ij}(\Bx_\fs^i, \Bx_\ft^j)= (\Bx_{\ft'}^j , \Bx_{\fs'}^i)$ for $1 \leq i < j \leq \mathsf{k}$. 
Applying \cite[Proposition 4.1]{LY19$_1$}, one has 

\begin{prop}
\label{P:prop4.1}
Keep the same notation. The $\mathsf k$ self-similar graphs $(\bZ, \mathsf{E}_{n_i})$ with the commutation relations $\theta_{ij}$'s 
determine a self-similar $\mathsf k$-graph $(\bZ, \Lambda_\theta)$ if and only if 
\begin{align}
\label{E:act}
 (a\cdot \Bx_\fs^i)(a|_{\Bx_\fs^i} \cdot \Bx_\ft^j) &= (a \cdot \Bx_{\ft'}^j)(a|_{\Bx_{\ft'}^j} \cdot \Bx_{\fs'}^i)
\end{align}
for all $1 \leq i < j \leq \mathsf{k}$, $\fs\in [n_i]$, $\ft\in [n_j]$. 
\end{prop}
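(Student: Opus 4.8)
My plan is to read this as the specialization of \cite[Proposition 4.1]{LY19$_1$} to the data $(\bZ,\mathsf E_{n_i})$ and the $\theta_{ij}$, and to organize the argument into the two implications, with the genuine work concentrated in a single well-definedness check.

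For necessity I would argue directly. If $(\bZ,\Lambda_\theta)$ is a self-similar $\mathsf k$-graph, then $\Bx^i_\fs \Bx^j_\ft=\Bx^j_{\ft'}\Bx^i_{\fs'}$ in $\Lambda_\theta$ whenever $\theta_{ij}(\fs,\ft)=(\fs',\ft')$; applying the bijection $a\cdot$ to this one element and expanding both products through axiom~(1) of Definition~\ref{D:ss} gives
\[
(a\cdot \Bx^i_\fs)(a|_{\Bx^i_\fs}\cdot \Bx^j_\ft)=a\cdot(\Bx^i_\fs\Bx^j_\ft)=a\cdot(\Bx^j_{\ft'}\Bx^i_{\fs'})=(a\cdot \Bx^j_{\ft'})(a|_{\Bx^j_{\ft'}}\cdot \Bx^i_{\fs'}),
\]
which is exactly \eqref{E:act}.

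For sufficiency I would start from \eqref{E:act} and build the self-similar structure. On edges I take $a\cdot \Bx^i_\fs$ and $a|_{\Bx^i_\fs}$ from the rank $1$ graphs $(\bZ,\mathsf E_{n_i})$, and I extend to all of $\Lambda_\theta$ by imposing axioms~(1) and~(3) as recursive definitions (and axiom~(5) with the group law to pass from $a$ to any $a^\ell$). The whole point is then to show these formulas are \emph{well defined}, i.e.\ independent of how a path is split into edges. Since any two factorizations differ by elementary $\theta$-swaps $\Bx^i_\fs\Bx^j_\ft\leftrightarrow \Bx^j_{\ft'}\Bx^i_{\fs'}$, it suffices to test invariance under a single swap, and for the action this invariance \emph{is} \eqref{E:act}; hence $a\cdot$ and every $a^\ell\cdot=(a\cdot)^\ell$ is a well-defined degree-preserving bijection of $\Lambda_\theta$.

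The hard part will be the companion well-definedness of the restriction cocycle, i.e.\ $h|_{\Bx^i_\fs}|_{\Bx^j_\ft}=h|_{\Bx^j_{\ft'}}|_{\Bx^i_{\fs'}}$ for every $h\in\bZ$. My approach would be to apply the (now well-defined) bijection $h\cdot$ to the equal paths $\Bx^i_\fs\Bx^j_\ft\,\nu=\Bx^j_{\ft'}\Bx^i_{\fs'}\,\nu$ and expand by axiom~(1), obtaining
\[
(h\cdot \Bx^i_\fs)(h|_{\Bx^i_\fs}\cdot \Bx^j_\ft)\bigl(h|_{\Bx^i_\fs}|_{\Bx^j_\ft}\cdot\nu\bigr)=(h\cdot \Bx^j_{\ft'})(h|_{\Bx^j_{\ft'}}\cdot \Bx^i_{\fs'})\bigl(h|_{\Bx^j_{\ft'}}|_{\Bx^i_{\fs'}}\cdot\nu\bigr);
\]
the degree $\epsilon_i+\epsilon_j$ prefixes coincide by the version of \eqref{E:act} for $h$ — valid for every $h$ once each $h\cdot$ is a well-defined bijection — so unique factorization forces the two tails to agree for all $\nu$, meaning $h|_{\Bx^i_\fs}|_{\Bx^j_\ft}$ and $h|_{\Bx^j_{\ft'}}|_{\Bx^i_{\fs'}}$ act identically on $\Lambda_\theta$. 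The delicate step I expect to be the real obstacle is upgrading this equality of \emph{actions} to equality of \emph{group elements}: this needs distinct powers of $a$ to act distinctly on $\Lambda_\theta$, a faithfulness feature built into the rank $1$ pieces, and it is precisely this point that \cite[Proposition 4.1]{LY19$_1$} settles in general. Granting well-definedness, axioms~(2)--(5) are then inherited edge-by-edge from the $(\bZ,\mathsf E_{n_i})$ and the commutativity of $\bZ$, and axiom~(1) holds by construction, which would complete the proof.
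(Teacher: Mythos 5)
The paper itself offers no proof of this proposition --- it is simply quoted from \cite[Proposition 4.1]{LY19$_1$} --- so there is no in-paper argument to compare yours against; I can only assess your proposal on its own terms. Your necessity direction is correct. The sufficiency direction, however, has a genuine gap, concentrated exactly where you flagged it but more serious than you suggest: the restriction compatibility $a|_{\Bx^i_\fs}|_{\Bx^j_\ft}=a|_{\Bx^j_{\ft'}}|_{\Bx^i_{\fs'}}$ is \emph{not} a consequence of \eqref{E:act}, and without it neither the restriction cocycle nor even the action on words of length $\geq 3$ is well defined. Concretely, take $\mathsf k=2$, the $(2,1)$-odometer with edges $\Bx_0,\Bx_1$, the $(1,3)$-odometer with edge $\By$, and the trivial commutation relation $\Bx_\fs\By=\By\Bx_\fs$. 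Condition \eqref{E:act} holds: $(a\cdot\By)(a|_\By\cdot\Bx_\fs)=\By(a^3\cdot\Bx_\fs)=\By\Bx_{\fs+1}=(a\cdot\Bx_\fs)(a|_{\Bx_\fs}\cdot\By)$ because $3$ is odd. Yet $a|_{\Bx_0}|_{\By}=1$ while $a|_{\By}|_{\Bx_0}=a^3|_{\Bx_0}=a$, and correspondingly $a\cdot(\Bx_0\Bx_0\By)$ computed through the factorization $\Bx_0\Bx_0\By$ gives $\Bx_1\Bx_0\By$ but computed through $\By\Bx_0\Bx_0$ gives $\By\Bx_1\Bx_1$ --- two different paths. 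So the edge-level identity \eqref{E:act} can hold while no self-similar $2$-graph exists, which means no argument of the shape you propose can close the implication.

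Two specific steps in your write-up fail for this reason. First, your reduction of well-definedness of the action to ``invariance under a single swap, which \emph{is} \eqref{E:act}'' is incomplete: a swap in the interior of a word is acted on by $h=a|_{\pi}$ for the prefix $\pi$ (an arbitrary power of $a$), and the suffix after the swap is moved by the two a priori different double restrictions, so well-definedness of the action already presupposes the restriction identity you try to derive afterwards. Second, your proposed upgrade from ``the two restrictions act identically on all paths'' to ``they are equal in $\bZ$'' via faithfulness cannot work: faithfulness is \emph{not} built into the rank $1$ pieces, and in the cases this paper cares most about (e.g.\ $\mathfrak n=\mathds 1_{\mathsf k}$ in Subsection \ref{S:ni=1}) every $a^\ell$ acts trivially on the path space while the restrictions $a|_{\Bx_i}=a^{m_i}$ carry all the information. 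A correct treatment must impose the restriction identity $a|_{\Bx^i_\fs}|_{\Bx^j_\ft}=a|_{\Bx^j_{\ft'}}|_{\Bx^i_{\fs'}}$ as a separate hypothesis alongside \eqref{E:act} (as the source result in \cite{LY19$_1$} does), or verify it from special structure of the particular rank $1$ pieces at hand; it cannot be deduced from \eqref{E:act} alone.
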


Based on Example \ref{eg:BS}, it is reasonable to introduce the following notion. 

\begin{defn}
\label{D:rkBS}
A self-similar $\mathsf k$-graph obtained from $(n_i, m_i)$-odometers with the commutation relations $\theta_{ij}$'s is called a \textit{rank $\mathsf k$ BS semigroup}, denoted as 
$\Lambda_\theta((n_1, \ldots, n_{\mathsf k}), (m_1, \ldots, m_{\mathsf k}))$, or simply 
$\Lambda_\theta(\mathfrak n, \mathfrak m)$ if the context is clear. 
The ambient $\mathsf k$-graph is still written as $\Lambda_\theta$. 
\end{defn}

Here are some examples of higher rank BS semigroups. 

\begin{eg}
\label{eg:po}
A standard product of odometers studied in \cite{LY19$_1$} is a rank $\mathsf{k}$ BS semigroup 
induced from $(n_i, 1)$-odometers $\mathsf E(n_i, 1)$ ($1\le i\le \mathsf{k}$) with the division commutation relations (refer to Example \ref{eg:d} for $\mathsf d$). 
So it is of the form $\Lambda_{\mathsf d}(\mathfrak n, \mathds 1_{\mathsf k})$.
\end{eg}

\begin{eg} 
\label{eg:tri}
Consider $(n_i, m_i)$-odometers with the trivial permutation $\theta_{ij}$ ($1\le i<j\le \mathsf k$) (see Example \ref{eg:t}). Then they 
induce a rank $\mathsf k$ BS semigroup if and only if $n_i=1$ for each $1\le i\le \mathsf{k}$. In fact,
the condition \eqref{E:act} in Proposition \ref{P:prop4.1} implies $n_i = 1$ for all $1\le i\le \mathsf{k}$. 
Then the trivial relation is the same as the division commutation relation. 
The rank $\mathsf{k}$ BS semigroup obtained in this case is of the form $\Lambda_{\mathsf d}(\mathds 1_{\mathsf k}, \mathfrak m)$. 
\end{eg}

\begin{eg}
\label{eg:div}
Suppose that $n_i=n$ for all $1 \le i\le \mathsf{k}$, and that $\theta_{ij}$ is the division permutation. 
Then one necessarily has $\theta_{ij}(s,t)=(t,s)$. 
One can check that this yields a rank $\mathsf{k}$ BS semigroup if and only if either $m_i = m_j$ for all $1\le i,j\le \mathsf{k}$, or $n\!\mid\!m_i$ for all $1\le i\le \mathsf{k}$. 
These are rank $\mathsf{k}$ BS semigroups of the form 
$\Lambda_{\mathsf d}((n,\ldots, n), (m, \ldots, m))$, or
$\Lambda_{\mathsf d}((n,\ldots, n), (n\widetilde m_1, \ldots, n \widetilde m_{\mathsf k}))$.
 
So the class obtained in Example \ref{eg:tri} is a special case here with $n=1$. 
\end{eg}

In the sequel, we provide some interesting C*-algebras studied in the literature, which can be realized from higher rank BS semigroups. 
\begin{enumerate}
\item
It is shown in \cite{LY19$_1$} that the Cuntz algebra $\Q_\bN$ is isomorphic to the boundary quotient of a semigroup of the form in Example \ref{eg:po}. 

\item
For $2\le p\in \bN$, the $p$-adic  C*-algebra $\Q_p$ can also be recovered from a semigroup of the form in Example \ref{eg:po} 
(cf. \cite{LY19$_1$} for $p=2$ in terms of standard product of odometers).

\item 
$\Q_\bN$ can be also realized as a boundary quotient of the left $ax+b$ semigroup $\bN\rtimes \bN^\times$ studied in \cite{LR10}. The C*-algebra of $\bN\rtimes \bN^\times$ itself is also related 
to higher rank BS semigroups. 

\item
The boundary $\partial \T(\bN^\times\ltimes \bN)$ of the right $ax+b$ semigroup $\bN^\times\ltimes \bN$ is a boundary quotient of a higher-rank BS semigroup of the form given in Example \ref{eg:tri}. 

\item
The C*-algebra $\O(E_{n,m})$ studied by Katsura in \cite{Kat08} is isomorphic to $\O_{\mathsf E(n,m)}$ (see Examples \ref{eg:E(n,m)} and \ref{eg:BS}). 
\end{enumerate}

\begin{rem}
The C*-algebras of both the left $ax+b$ semigroup $\bN\rtimes \bN^\times$  and the right $ax+b$ semigroup $\bN^\times\ltimes \bN$ are related to higher rank BS semigroups. This will be 
studied elsewhere. 
\end{rem}

\subsection{Relation to Furstenberg's $\times p, \times q$ conjecture}
\label{SS:Fur}
We first give a very brief introduction on Furstenberg's $\times p, \times q$ conjecture. For all undefined notions or any further information, refer to \cite{BS24, Fur67, HW17, Sca20}. 

Let $2\le p,q\in \bN$ be multiplicatively independent, i.e., $\frac{\ln p}{\ln q}\not\in \bQ$. Define $T_p: \bT\to \bT$ by $T_p(z)=z^p$ for all $z\in \bT$. Similarly for $T_q$. 
A subset of $\bT$ is said to be \textit{$\times p, \times q$-invariant} if it is invariant under both $T_p$ and $T_q$. 
Furstenberg classifies all closed $\times p, \times q$-invariant subsets of $\bT$ in \cite{Fur67}: Such a subset is either finite or $\bT$ itself. 
Then he conjectures the following: 

\begin{conjecture*}[Furstenberg's $\times p, \times q$ conjecture]
An ergodic $\times p, \times q$-invariant Borel probability measure of $\bT$ is either finitely supported or the Lebesgue measure. 
\end{conjecture*}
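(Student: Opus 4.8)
The plan is to confront the final statement for what it is: the celebrated Furstenberg $\times p,\times q$ conjecture, which to date remains \emph{open}. Accordingly, what I can honestly sketch is not a complete proof but the entropy-rigidity strategy that yields the strongest partial result currently known, together with a precise identification of the step at which all available techniques fail. The natural organizing principle is a dichotomy based on the metric entropy of an ergodic $\times p,\times q$-invariant Borel probability measure $\mu$ on $\bT$ with respect to the maps $T_p$ and $T_q$.

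The first step is to dispose of the positive-entropy case by measure rigidity. Here I would invoke the theorem of Rudolph (for coprime $p,q$) and its extension by A.~Johnson (to arbitrary multiplicatively independent $p,q$): if $\mu$ is ergodic under the joint $\times p,\times q$-action and $h_\mu(T_p)>0$, then $\mu$ is the Lebesgue measure. This produces the clean dichotomy: \emph{either} $\mu$ is Lebesgue, \emph{or} $h_\mu(T_p)=0$. Thus the conjecture reduces entirely to showing that a zero-entropy ergodic jointly invariant measure is finitely supported; and by Furstenberg's topological classification \cite{Fur67}, any finite $\times p,\times q$-invariant set consists of rational points, so ``finitely supported'' is exactly the remaining alternative one must reach.

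The second step is therefore to prove that $h_\mu(T_p)=0$ forces $\mu$ to have finite support. The mechanism one hopes to exploit is the presence of two commuting expanding maps with multiplicatively independent rates $\log p$ and $\log q$: morally, the joint action is so rigid that a measure carrying no entropy has ``no room to spread'' and must collapse onto an atomic, hence rational, invariant set. Concretely, I would study the conditional measures of $\mu$ along the orbit directions of the action generated by $T_p$ and $T_q$ and attempt to show that they are trivial.

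The hard part --- in fact the \emph{entire} difficulty --- is exactly this zero-entropy case, and it is where every known method breaks down. The Rudolph--Johnson argument is fundamentally driven by positive entropy: from it one manufactures nontrivial conditional measures along unstable fibres and then propagates the scaling invariance to force Lebesgue; when $h_\mu(T_p)=0$ these conditional structures degenerate and the argument has nothing to act on. No subsequent technique --- Lindenstrauss-type refinements, joinings methods, or effective equidistribution estimates --- is presently known to exclude exotic zero-entropy invariant measures. I therefore do not expect to close this step, and the candid status of the statement is that it is a genuine open conjecture. Its role in the present paper is motivational, linking the higher-rank Baumslag--Solitar semigroup $\Lambda_{\mathsf d}(\mathds 1_{\mathsf k},\mathfrak m)$ to the $\times p,\times q$ problem rather than resolving it.
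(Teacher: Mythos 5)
Your assessment is exactly right: the statement is an open conjecture, and the paper does not prove it either --- it is stated purely as motivation, with the Rudolph--Johnson entropy dichotomy cited as the best known partial result, just as you describe. Your sketch of the positive-entropy rigidity argument and the identification of the zero-entropy case as the unresolved obstruction matches the paper's framing, so there is nothing to correct.
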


According to our best knowledge, this conjecture is still open. 
The best known result so far is the following theorem, which is proved by Rudolph when $p$ and $q$ are coprime in \cite{Rud90}, and later improved by Johnson in \cite{Jon92}. 

\begin{theorem*}[Rudolph-Johnson]
If $\mu$ is an ergodic $\times p, \times q$-invariant measure on $\bT$, then either both entropies of $T_p$ and $T_q$ with respect to $\mu$ are $0$, or $\mu$ is the Lebesgue measure.
\end{theorem*}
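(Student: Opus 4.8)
The plan is to establish the sharp statement behind the dichotomy: if $\mu$ is ergodic for the joint semigroup $\langle T_p,T_q\rangle$ and $h_\mu(T_p)>0$, then $\mu$ is Lebesgue measure. The symmetry of the hypotheses in $p$ and $q$ then also covers the case $h_\mu(T_q)>0$, and since Lebesgue measure has $T_p$-entropy $\log p>0$, the asserted alternative (\emph{either} both entropies vanish \emph{or} $\mu$ is Lebesgue) follows at once. Working additively on $\bT=\bR/\bZ$ with $T_p(x)=px$, $T_q(x)=qx$, the first move is to pass to the natural extension of the commuting pair: as $T_p$ and $T_q$ commute, their inverse limit is a $\bZ^2$-action by commuting automorphisms $S_p,S_q$ of the compact abelian group $\overline X=\widehat{\bZ[1/pq]}$ (the $\{p,q\}$-solenoid), carrying an ergodic invariant lift $\overline\mu$ with $h_{\overline\mu}(S_p)=h_\mu(T_p)>0$. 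This trades the non-invertible problem for an algebraic, invertible one in which stable and unstable foliations are available.

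Next I would read the entropy off conditional measures. The automorphism $S_p$ (multiplication by $p$) is partially hyperbolic: it expands an unstable direction (containing the archimedean factor) and contracts a complementary stable one. Disintegrating $\overline\mu$ over the $S_p$-unstable foliation and applying the Rokhlin/Ledrappier--Young entropy formula for these algebraic actions, the hypothesis $h_{\overline\mu}(S_p)>0$ forces the leafwise conditional measures to be non-atomic. The goal is then to upgrade non-atomicity to Haar-ness: once the unstable conditional measures are the Haar measures of their leaves, a standard argument forces $\overline\mu$ to be the Haar measure of $\overline X$, whose pushforward to $\bT$ is exactly Lebesgue measure.

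The crux is to bring in the second map. Since $S_q$ also preserves $\overline\mu$ and commutes with $S_p$, it permutes the $S_p$-unstable leaves and transports the unstable conditional measures equivariantly. Multiplicative independence, $\log p/\log q\notin\bQ$, makes the set of scales $\{p^{a}q^{b}:a,b\in\bZ\}$ dense in the positive reals, so the joint action $\langle S_p,S_q\rangle$ rescales the leaves along incommensurate directions. A recurrence argument --- Poincar\'e recurrence for $\overline\mu$ together with this density --- then pins the already non-atomic conditional measures down to be invariant under a dense subgroup of translations of the leaf, hence equal to leaf Haar measure. This is precisely the Rudolph--Johnson rigidity of conditional measures, with Johnson's refinement running it under multiplicative independence in place of Rudolph's original coprimality hypothesis $\gcd(p,q)=1$.

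The hard part will be this last step: converting the qualitative ``positive entropy $\Rightarrow$ non-atomic'' input into quantitative full translation invariance. Rudolph's implementation is a delicate entropy-accounting argument that simultaneously tracks how $S_p$ and $S_q$ move mass along a leaf --- controlling the conditional measures along a F{\o}lner-type sequence in the acting $\bZ^2$ and ruling out any entropy ``leaking'' into a proper closed subgroup --- and it is here that the two-map structure and the Diophantine density are indispensable. As a fallback I would keep Host's route in reserve: one shows directly that $h_\mu(T_p)>0$ forces $\mu$-almost every point to be normal in base $q$, whence $\mu=$ Lebesgue; its technical heart is a martingale/conditional-expectation estimate in the same spirit but packaging the Diophantine input differently.
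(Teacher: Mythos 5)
This statement is the Rudolph--Johnson theorem, which the paper quotes as known background from the literature (\cite{Rud90}, \cite{Jon92}) and does not prove, so there is no internal proof to compare your attempt against. Your outline does track the standard modern route to the result: reduce to showing that positive entropy for one of the maps forces Lebesgue measure, pass to the natural extension (the $\bZ^2$-action by commuting automorphisms of the solenoid $\widehat{\bZ[1/pq]}$), read entropy off conditional measures on unstable leaves, and use the commuting second map together with multiplicative independence to force those conditionals to be Haar. This is closer in spirit to the later reformulations (Host's normality argument and the conditional-measure treatments) than to Rudolph's original symbolic skew-product argument, but it is a legitimate skeleton of a proof.

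As a proof, however, the proposal has a genuine gap exactly where you flag ``the hard part.'' The passage from ``non-atomic unstable conditionals'' to ``translation-invariant, hence Haar, conditionals'' is the entire content of the theorem, and Poincar\'e recurrence plus density of $\{p^aq^b : a,b\in\bZ\}$ in the positive reals does not deliver it: recurrence produces infinitely many return times but gives no control over \emph{which} translations of a leaf preserve the conditional measure, and non-atomicity is very far from invariance under a dense group of translations (there are abundant non-atomic measures on a line with no translation symmetry at all). What is actually required is a quantitative entropy identity --- in Rudolph's version, that the conditional entropy of $T_p$ relative to the $\sigma$-algebra generated by the $T_q$-iterates equals the full entropy, which is where ergodicity of the \emph{joint} action and the arithmetic relation between $p$ and $q$ enter; in Host's version, a martingale estimate showing that $\mu$-almost every point is normal to base $q$. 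Your proposal names these ingredients as a fallback but supplies neither, so the argument as written does not close.
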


When both entropies of $T_p$ and $T_q$ with respect to $\mu$ are $0$, Furstenberg's $\times p, \times q$ conjecture is reduced to studying the 
C*-algebra $\ca(G)$ of the group $G$, where 
\begin{align}
\label{E:p1q10}
G:=\langle s, t, z: st=ts, \ sz=z^p s,\ tz=z^q t\rangle. 
\end{align}
It turns out that $\ca(G)\cong \ca(\bZ[\frac{1}{pq}])\rtimes \bZ^2\cong \ca(\bZ[\frac{1}{pq}]\rtimes \bZ^2)$. In \cite{HW17}, the representation theory of $\ca(\bZ[\frac{1}{pq}])\rtimes \bZ^2$ is 
studied. In particular, the authors focus on which kind of its representations are induced by $\times p, \times q$-invariant measures on $\bT$.
Later, the following equivalence is shown in \cite{BS24}: 
\textit{Furstenberg's $\times p, \times q$ conjecture holds true if and only if the canonical trace is the only faithful extreme tracial state on $\ca(G)\cong \ca(\bZ[\frac{1}{pq}]\rtimes \bZ^2)$.}

Our purpose here is to connect Furstenberg's conjecture with higher rank BS semigroups. 
For this, from \eqref{E:p1q10} one has 
\begin{align*}
t^{-1}s^{-1}=s^{-1}t^{-1}, \ z^{-1}s^{-1}=s^{-1}z^{-p} ,\ z^{-1}t^{-1}=t^{-1}z^{-q}. 
\end{align*}
Let $\tilde G$ be the group
\begin{align}
\label{E:1p1q0}
\tilde G:=\langle \tilde s, \tilde t, \tilde z: \tilde t\tilde s=\tilde s\tilde t, \ \tilde z\tilde s=\tilde s \tilde z^{p} ,\ \tilde z \tilde t=\tilde t \tilde z^{q}\rangle. 
\end{align}
Thus  $G\cong \tilde G$ and so $\ca(G)\cong \ca(\tilde G)$. 
The upshot by doing so is $\ca(G)\cong \O_{\Lambda_{\mathsf d}((1,1),(p,q))}$. 

Now return to \eqref{E:p1q10} again. Let $G^+$ be the corresponding semigroup
\begin{align}
\label{E:p1q10+}
G^+:=\langle s, t, z: st=ts, \ sz=z^p s,\ tz=z^q t\rangle^+. 
\end{align}
Then we claim that $\Q(G^+)\cong \O_{\Lambda_{\mathsf d}((p,q),(1,1))}$.  
In fact, let $e_i:=z^i s$ and $f_j:=z^j t$ for $i\in [p]$ and $j\in [q]$. Then 
$st=ts\iff e_0 f_0=f_0e_0$. For $k\in [p]$ and $\ell \in [q]$, let $k'\in [p]$ and $\ell'\in [q]$ be the unique ones such that 
$k+\ell p= \ell' + k' q$. Then we have 
\begin{align*}
e_k f_\ell =z^k s z^\ell t=z^k z^{\ell p} st= z^{\ell'} z^{k' q} ts=z^{\ell'} tz^{k'}s=f_{\ell'} e_{k'}. 
\end{align*}
Thus there is a homomorphism 
\[
\pi:  \O_{\Lambda_{\mathsf d}((p,q),(1,1))} = \O_{\bZ, \Lambda_{\mathsf d}} \to \Q(G^+),
\ s_{\Bx_i^1}\mapsto v_{e_i},\ s_{\Bx_j^2}\mapsto v_{f_j},\ u_a\mapsto v_z,
\] 
which is an isomorphism as it has an inverse given by 
\[
v_s\mapsto s_{\Bx^1_0},\ v_t\mapsto s_{\Bx^2_0},\ v_z\mapsto u_a. 
\]
Hence $\O_{\Lambda_{\mathsf d}((p,q),(1,1))} \cong \Q(G^+)$. 

To sum up, we have shown that
\begin{align}
\label{E:C*(G)Q(G+)}
\ca(G)\cong  \O_{\Lambda_{\mathsf d}((1,1),(p,q))} \text{ and }\Q(G^+)\cong  \O_{\Lambda_{\mathsf d}((p,q),(1,1))}. 
\end{align}

Based on the above, in what follows, we focus on two extreme, but rather interesting, classes of higher rank BS semigroups: 
$\Lambda_{\mathsf d}(\mathfrak n, \mathds 1_{\mathsf k})$ with $\mathds 1_{\mathsf k}\le \mathfrak n\in \bN^{\mathsf k}$ and $\Lambda_{\mathsf d}(\mathds 1_{\mathsf k}, \mathfrak m)$
with $m_i\ne 0$ for all $1\le i\le \mathsf k$.

\smallskip

From now on, to unify our notation,  the set  $\{0,1\ne p_i \in \bZ: 1\le i\le \mathsf k\}$ is said to be \textit{multiplicatively independent} if there is no $\mathbf 0\ne q\in \bZ^{\mathsf k}$ such that 
$\prod\limits_{i=1}^{\mathsf k} p_i^{q_i}=1$. When all $p_i$'s are also $\ge 1$, $\{p_i: 1\le i\le \mathsf k\}$ is multiplicatively independent if and only if $\{\ln p_i: 1\le i\le \mathsf k\}$ is 
rationally independent.

\subsection{The case of $\mathfrak m=\mathds 1_{\mathsf k}$}
\label{S:type}

The C*-algebra of the self-similar $\mathsf k$-graph  $\Lambda_{\mathsf d} (\mathfrak n, \mathds 1_{\mathsf k})$ is studied in \cite{LY19$_1$}. 
It is shown there that 
$\O_{\Lambda_{\mathsf d}(\mathfrak n, \mathds 1_{\mathsf k})}$ is simple if and only if $\{n_i: 1\le i\le \mathsf k\}$ is multiplicatively independent, 
and that its cycline subalgebra is Cartan by \cite[Theorem 5.6]{LY19$_2$}.
In what follows, we identify the center of $\O_{\Lambda_{\mathsf d}(\mathfrak n, \mathds 1_{\mathsf k})}$, which is overlooked in \cite{LY19$_1$, LY19$_2$}. 
A useful lemma first: 

\begin{lem}
\label{L:levtr}
(i) Let $\mu,\nu\in \Lambda_{\mathsf d}^{\mathbf p}$ $(\mathsf p\in \bN^{\mathsf k}$) and $m\in \bZ$. Then there is $\ell\in \bZ$ 
such that $ \nu a^m = a^\ell \mu$. 

(ii) $\O_{\Lambda_{\mathsf d}(\mathfrak n, \mathds 1_{\mathsf k})}=\ol\spn\{u_{a^m} s_\alpha s_\beta^*: m\in \bZ, \alpha, \beta\in \Lambda_{\mathsf d}\}$.
\end{lem}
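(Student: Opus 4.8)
The plan is to derive (ii) quickly from (i), so the substance is (i). Fix $\mu,\nu\in\Lambda_{\mathsf d}^{\mathbf p}$ and $m\in\bZ$. In the group $\mathsf{G}_{\bZ,\Lambda_{\mathsf d}}$ every element has a unique normal form (path part times a power of $a$), and the self-similarity relation gives $a^\ell\mu=(a^\ell\cdot\mu)(a^\ell\vert_\mu)$. Comparing normal forms, the desired identity $\nu a^m=a^\ell\mu$ is therefore \emph{equivalent} to the pair of conditions
\begin{equation*}
\text{(A)}\quad a^\ell\cdot\mu=\nu \qquad\text{and}\qquad \text{(B)}\quad a^\ell\vert_\mu=a^m .
\end{equation*}
Thus (i) reduces to producing a single $\ell\in\bZ$ realizing (A) and (B) simultaneously.

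First I would establish transitivity of the $\bZ$-action on $\Lambda_{\mathsf d}^{\mathbf p}$. For $\Lambda_{\mathsf d}(\mathfrak n,\mathds 1_{\mathsf k})$, the generator $a$ acts on a degree-$\mathbf p$ path, written in normal form $\mu_1\cdots\mu_{\mathsf k}$ with $d(\mu_i)=p_i\epsilon_i$, exactly as ``add $1$ with carry'' in mixed radix: it increments the colour-$1$ block as an $n_1^{p_1}$-odometer, carries into the colour-$2$ block upon overflow, and so on, the overflow of the colour-$\mathsf k$ block producing the global restriction $a$. This is checked directly from Lemma~\ref{L:formula} applied colourwise together with the division relations (the identity $s+tn_i=t'+s'n_j$ is precisely the digit-carry rule). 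Consequently $a$ permutes $\Lambda_{\mathsf d}^{\mathbf p}$ as a single cycle of length $\mathfrak n^{\mathbf p}=\prod_{i}n_i^{p_i}$; in particular the action is transitive, so the set of $\ell$ satisfying (A) is a coset $\ell_0+\mathfrak n^{\mathbf p}\bZ$.

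It then remains to adjust $\ell$ within this coset so that (B) holds. Writing $\ell=\ell_0+t\,\mathfrak n^{\mathbf p}$ and using the cocycle relation of Definition~\ref{D:ss}(v) with $a^{\mathfrak n^{\mathbf p}}\cdot\mu=\mu$, one obtains $a^\ell\vert_\mu=a^{\ell_0}\vert_\mu\,(a^{\mathfrak n^{\mathbf p}}\vert_\mu)^{t}$. The crux is the computation $a^{\mathfrak n^{\mathbf p}}\vert_\mu=a$: over one full period of the odometer, the cascade of carries fires the global restriction exactly once. Granting this, $a^\ell\vert_\mu=a^{c_0+t}$ where $a^{c_0}:=a^{\ell_0}\vert_\mu$, and this runs over all of $\langle a\rangle$ as $t$ ranges over $\bZ$; taking $t=m-c_0$ gives an $\ell$ meeting both (A) and (B), which proves (i).

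Finally, for (ii): by Proposition~\ref{P:genO}(i) it suffices to rewrite each spanning element $s_\mu u_{a^m}s_\nu^*$ in the asserted form. Applying (i) with $\nu$ replaced by $\mu$ yields $\ell$ with $a^\ell\cdot\mu=\mu$ and $a^\ell\vert_\mu=a^m$, so relation (iv) of Definition~\ref{D:O} gives $u_{a^\ell}s_\mu=s_{a^\ell\cdot\mu}u_{a^\ell\vert_\mu}=s_\mu u_{a^m}$, whence $s_\mu u_{a^m}s_\nu^*=u_{a^\ell}s_\mu s_\nu^*$. The main obstacle throughout is the carry computation $a^{\mathfrak n^{\mathbf p}}\vert_\mu=a$ (equivalently, that the period-to-carry ratio equals $\pm 1$): it is exactly this fact that forces (B) to be solvable for \emph{every} $m$ rather than only for $m$ in a proper coset, and it depends on the precise form of the division relations, not on any coprimality of the $n_i$.
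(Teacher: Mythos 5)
Your argument is correct, and it reaches the lemma by a genuinely different route from the paper. The paper proves (i) by induction on path length: it first treats a single colour $\mathsf E(n,1)$, verifying the base case $a^{mn+i-j}e_j=e_i a^m$ directly from Lemma~\ref{L:formula} and then inducting, and finally reduces the multicolour case to the single-colour one by pushing $a^m$ leftward through the factorization $\mu=\mu_1\cdots\mu_{\mathsf k}$ one colour at a time. You instead split the identity $\nu a^m=a^\ell\mu$ into the action condition $a^\ell\cdot\mu=\nu$ and the restriction condition $a^\ell\vert_\mu=a^m$ via uniqueness of normal forms, and solve them using two structural facts about the product of odometers: transitivity of $\langle a\rangle$ on $\Lambda_{\mathsf d}^{\mathbf p}$ (a single $\mathfrak n^{\mathbf p}$-cycle) and the period computation $a^{\mathfrak n^{\mathbf p}}\vert_\mu=a$. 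Both facts are correct for $\Lambda_{\mathsf d}(\mathfrak n,\mathds 1_{\mathsf k})$ and are verifiable from Lemma~\ref{L:formula} colourwise, at the same level of detail the paper itself employs; one cosmetic point is that the cocycle identity of Definition~\ref{D:ss}(v) actually gives $a^{\ell_0+t\mathfrak n^{\mathbf p}}\vert_\mu=(a^{\mathfrak n^{\mathbf p}}\vert_{\nu})^{t}\,a^{\ell_0}\vert_\mu$ with the outer restriction taken at $\nu=a^{\ell_0}\cdot\mu$ rather than at $\mu$, which is harmless here since $a^{\mathfrak n^{\mathbf p}}\vert_\nu=a$ for every $\nu$ of degree $\mathbf p$. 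The derivation of (ii) from (i) is the same in both treatments. What your route buys is conceptual clarity: it isolates exactly why the lemma holds --- the restriction of the full period is the generator $a$, so condition (B) is solvable for every $m$ --- and it makes visible that the statement would fail for general $\mathfrak m$, where $a^{\mathfrak n^{\mathbf p}}\vert_\mu$ is a proper power of $a$; the paper's induction is more elementary and avoids invoking the global odometer structure.
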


\begin{proof}
(i) We first prove the lemma holds true for the classical odometer action $\mathsf E(n, 1)$. We argue this by induction with respect to the lengths of $\mu$ and $\nu$. 
If $\mu:=e_i$ and $\nu:=e_j$. WLOG we assume that $i\ge j\in [n]$.  For any $m \in \bZ$, by Lemma \ref{L:formula} one can verify that  $a^{mn+i-j} \nu = \mu a^m$.  
Assume that this is true for any $m\in \bZ$ and all $\mu, \nu \in \SE^*$ with $|\mu|=|\nu|\le k$. Now consider $e_s \mu$ and $e_t \nu$  with $|\mu|=|\nu|=k$. Let $m\in \bZ$. 
By our inductive assumption, we have $e_s \mu a^m = e_s a^{\ell'} \nu = a^{\ell} e_t\nu$ for some $\ell', \ell\in \bZ$. This proves the $\mathsf E(n, 1)$ case. 

Now return to $\Lambda_{\mathsf d}$. Let $\mu, \nu\in \Lambda_d$ with $d(\mu)=d(\nu)=\mathbf p\in \bN^{\mathsf k}$. Then by the unique factorization property 
$\mu = \mu_1\cdots \mu_{\mathsf k}$ and $\nu = \nu_1\cdots \nu_{\mathsf k}$ with 
$\mu_i, \nu_i \in \Lambda_{\mathsf d}^{p_i \epsilon_i}$ for $1\le i\le \mathsf k$. 
Then for any $m\in \bZ$, apply the above to $\mu_{\mathsf k} a^m$ in $\mathsf E(n_{\mathsf k}, 1)$, there is $\ell_{\mathsf k}\in \bZ$ such that 
$\mu a^m = \mu_1\cdots \mu_{\mathsf k -1} a^{\ell_{\mathsf k}} \nu_{\mathsf k}$. 
Repeatedly using the above gives $\mu a^m = a^\ell \nu$ for some $\ell \in \bZ$. 

(ii) This follows from (i) and Proposition \ref{P:genO} (i). 
\end{proof}

By  \cite[Theorem 7.5]{LY19$_2$} (or Appendix there), $\Lambda_{\mathsf d}(\mathfrak n, \mathds 1_{\mathsf k})$ and $\Lambda_{\mathsf d}$ share the same periodicity 
$\{\mathbf p\in \bZ^{\mathsf k}: \mathfrak n^{\mathbf p}=1\}$. Thus, for each pair $(\mathbf p, \mathbf q)\in \bN^{\mathsf k}\times \bN^{\mathsf k}$ with $\mathfrak n^{\mathbf p}=\mathfrak n^{\mathbf q}$, there is a bijection 
$\phi_{\mathbf p, \mathbf q}: \Lambda_{\mathsf d}^{\mathbf p} \to \Lambda_{\mathsf d}^{\mathbf q} $ satisfying 
\begin{align}
\label{E:uv}
\mu \nu = \phi_{\mathbf p, \mathbf q}(\mu) \phi_{\mathbf p, \mathbf q}^{-1}(\nu),\ \phi_{\mathbf p, \mathbf q}^{-1}(\nu)\phi_{\mathbf p, \mathbf q}(\mu)=\nu \mu
\end{align} 
for every pair $(\mu, \nu)\in \Lambda_{\mathsf d}^{\mathbf p}\times \Lambda_{\mathsf d}^{\mathbf q}$ (\cite[Theorem 7.1]{DY09}) (or \cite[Section 5]{HLRS15}). 
Let 
\[
V_{\mathbf p, \mathbf q}:=\sum_{\mu\in \Lambda_{\mathsf d}^{\mathbf p}} s_\mu s_{\phi_{\mathbf p, \mathbf q}(\mu)}^*. 
\]
By (\cite[Theorem 4.9]{DY09}), each $V_{\mathbf p, \mathbf q}$ is a unitary in $\O_{\Lambda_{\mathsf d}(\mathfrak n, \mathds 1_{\mathsf k})}$.

\begin{prop}
\label{P:center}
The center of $\O_{\Lambda_{\mathsf d}(\mathfrak n, \mathds 1_{\mathsf k})}$ is given by 
\[
\Z(\O_{\Lambda_{\mathsf d}(\mathfrak n, \mathds 1_{\mathsf k})}) =\ca(V_{\mathbf p, \mathbf q}: (\mathbf p, \mathbf q)\in \bN^{\mathsf k}\times \bN^{\mathsf k},\ \mathfrak n^{\mathbf p}=\mathfrak n^{\mathbf q}).
\] 
In particular,  $\Z(\O_{\Lambda_{\mathsf d}(\mathfrak n, \mathds 1_{\mathsf k})})$ is trivial, if and only if $\{n_i: 1\le i\le \mathsf k\}$ is multiplicatively independent. 
\end{prop}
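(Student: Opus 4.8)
The plan is to prove the two inclusions separately: the forward one by a direct computation, and the reverse one by first confining the center to the cycline subalgebra and then using the self-similar relations to cut it down to $\ca(V_{\mathbf p,\mathbf q})$.

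For the inclusion $\supseteq$, I first recall that each $V_{\mathbf p,\mathbf q}$ already lies in the ambient subalgebra $\O_{\Lambda_{\mathsf d}}\subseteq\O_{\Lambda_{\mathsf d}(\mathfrak n,\mathds 1_{\mathsf k})}$ and, by \eqref{E:uv} together with the computation of the center of a single-vertex $\mathsf k$-graph algebra in \cite{DY09}, commutes with every $s_\lambda$ and $s_\lambda^*$ ($\lambda\in\Lambda_{\mathsf d}$). Hence it suffices to check that $V_{\mathbf p,\mathbf q}$ commutes with the unitary $u_a$. Using relation (iv) of Definition \ref{D:O} I would compute
\[
u_a V_{\mathbf p,\mathbf q} u_a^*=\sum_{\mu\in\Lambda_{\mathsf d}^{\mathbf p}} s_{a\cdot\mu}\,u_{a|_\mu}\,u_{a|_{\phi_{\mathbf p,\mathbf q}(\mu)}}^{-1}\,s_{a\cdot\phi_{\mathbf p,\mathbf q}(\mu)}^*,
\]
and then reduce the claim to the two compatibility identities
\[
a\cdot\phi_{\mathbf p,\mathbf q}(\mu)=\phi_{\mathbf p,\mathbf q}(a\cdot\mu)\qquad\text{and}\qquad a|_\mu=a|_{\phi_{\mathbf p,\mathbf q}(\mu)}\qquad(\mu\in\Lambda_{\mathsf d}^{\mathbf p}).
\]
These should follow from the explicit ``carry/digit'' description of the standard odometers $\mathsf E(n_i,1)$ and from the fact that $\Lambda_{\mathsf d}(\mathfrak n,\mathds 1_{\mathsf k})$ and $\Lambda_{\mathsf d}$ share the periodicity group $\{\mathbf p\in\bZ^{\mathsf k}:\mathfrak n^{\mathbf p}=1\}$ (\cite[Theorem 7.5]{LY19$_2$}), which is exactly what makes $\phi_{\mathbf p,\mathbf q}$ equivariant for the $\bZ$-action. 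Granting them, the right-hand side is a reindexing of $V_{\mathbf p,\mathbf q}$ (as $\mu\mapsto a\cdot\mu$ permutes $\Lambda_{\mathsf d}^{\mathbf p}$ and the two $u$-factors cancel), so $u_a V_{\mathbf p,\mathbf q} u_a^*=V_{\mathbf p,\mathbf q}$. Since $\{u_a,s_\lambda,s_\lambda^*\}$ generates the algebra, each $V_{\mathbf p,\mathbf q}$, and hence all of $\ca(V_{\mathbf p,\mathbf q})$, is central.

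For the inclusion $\subseteq$, I would first note that the cycline subalgebra $\M$ is a MASA in $\O_{\Lambda_{\mathsf d}(\mathfrak n,\mathds 1_{\mathsf k})}$: when the algebra is not simple this is the Cartan property of \cite[Theorem 5.6]{LY19$_2$}, and when it is simple the cycline subalgebra is the diagonal, which is again maximal abelian. Since the center commutes with $\M$ and $\M=\M'\cap\O_{\Lambda_{\mathsf d}(\mathfrak n,\mathds 1_{\mathsf k})}$, we get $\Z(\O_{\Lambda_{\mathsf d}(\mathfrak n,\mathds 1_{\mathsf k})})\subseteq\M$, so it remains to determine which elements of $\M$ commute with the generators $s_e$ ($e\in\Lambda_{\mathsf d}^{\epsilon_i}$) and $u_a$. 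Writing $\M\cong\ca(\Iso(\G)^\circ)$ for the interior of the isotropy bundle of the associated groupoid $\G$ as in \cite[Lemma 5.2]{LY19$_2$}, and using the shared-periodicity description of the cycline triples (the higher-rank analogue of Proposition \ref{P:cycline}), a central element expands as a combination of cycline monomials indexed by pairs with $\mathfrak n^{\mathbf p}=\mathfrak n^{\mathbf q}$; imposing commutation with each $s_e$ forces these monomials to be summed over a complete level $\Lambda_{\mathsf d}^{\mathbf p}$, i.e.\ to assemble into the unitaries $V_{\mathbf p,\mathbf q}$, while the aperiodicity of each individual odometer factor $\mathsf E(n_i,1)$ rules out any surviving diagonal group-isotropy factor $u_{a^m}$ with $m\neq 0$. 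This yields $\Z\subseteq\ca(V_{\mathbf p,\mathbf q})$, and hence equality.

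The main obstacle is precisely this last commutation analysis inside $\M$: because the self-similar relation $u_a s_\mu=s_{a\cdot\mu}u_{a|_\mu}$ entangles the group generator with the graph and alters the $\bZ$-exponent, there is no naive $\bZ$-grading projecting onto $\O_{\Lambda_{\mathsf d}}$, so one cannot simply discard the $u_a$-content by a conditional expectation. The key input that resolves this is \cite[Theorem 7.5]{LY19$_2$}, which guarantees that passing to the self-similar graph adds no isotropy beyond the graph periodicity, so that the only surviving central data are the periodicity pairs already present in $\Lambda_{\mathsf d}$. Finally, for the ``in particular'' statement, multiplicative independence of $\{n_i\}$ forces $\mathfrak n^{\mathbf p}=\mathfrak n^{\mathbf q}$ (with $\mathbf p,\mathbf q\in\bN^{\mathsf k}$) to give $\mathbf p=\mathbf q$; then $\phi_{\mathbf p,\mathbf p}=\id$ and $V_{\mathbf p,\mathbf p}=\sum_{\mu\in\Lambda_{\mathsf d}^{\mathbf p}}s_\mu s_\mu^*=I$ by relation (iii), whence $\ca(V_{\mathbf p,\mathbf q})=\bC I$ and the center is trivial.
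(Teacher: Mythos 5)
Your forward inclusion is essentially the paper's argument. You derive the same two equivariance identities, $a\cdot\phi_{\mathbf p,\mathbf q}(\mu)=\phi_{\mathbf p,\mathbf q}(a\cdot\mu)$ and $a|_{\mu}=a|_{\phi_{\mathbf p,\mathbf q}(\mu)}$, and use them to cancel the two $u$-factors in $u_aV_{\mathbf p,\mathbf q}u_a^*$ and reindex; the only cosmetic difference is that the paper extracts these identities by applying the self-similar action to both sides of \eqref{E:uv} rather than from a digit/carry description of the odometers, and it then uses Lemma \ref{L:levtr}(ii) (the spanning set $u_{a^m}s_\alpha s_\beta^*$) to upgrade commutation with $u_a$ and with $\O_{\Lambda_{\mathsf d}}$ to centrality. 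Your closing observation that multiplicative independence forces $\mathbf p=\mathbf q$ and $V_{\mathbf p,\mathbf p}=I$ is also correct.

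The reverse inclusion is where you genuinely diverge, and where your write-up has a real hole. The paper's route is a short reduction: by Proposition \ref{P:genO}, a central element of $\O_{\Lambda_{\mathsf d}(\mathfrak n,\mathds 1_{\mathsf k})}$ is placed in $\Z(\O_{\Lambda_{\mathsf d}})$, and the latter is already computed to be $\ca(V_{\mathbf p,\mathbf q})$ in \cite{DY09}. You instead confine the center to the cycline MASA $\M$ (which is fine) and then assert that imposing commutation with each $s_e$ ``forces these monomials to be summed over a complete level $\Lambda_{\mathsf d}^{\mathbf p}$, i.e.\ to assemble into the unitaries $V_{\mathbf p,\mathbf q}$.'' That assertion is precisely the combinatorial content of the Davidson--Yang computation of $\Z(\O_{\Lambda_{\mathsf d}})$, now further entangled with the relation $u_as_\mu=s_{a\cdot\mu}u_{a|_\mu}$, and you do not carry it out. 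Citing \cite[Theorem 7.5]{LY19$_2$} only identifies which pairs $(\mathbf p,\mathbf q)$ can support cycline monomials; it does not show that the coefficients of a central element must be constant along each level $\Lambda_{\mathsf d}^{\mathbf p}$, which is the actual work. Either supply that analysis explicitly (for instance as the statement that a central function on $\Iso(\G_{\bZ,\Lambda_{\mathsf d}})^\circ$ must be invariant under conjugation by the full groupoid), or --- much more economically --- replace this entire step by the paper's reduction to $\Z(\O_{\Lambda_{\mathsf d}})$ and the known result of \cite{DY09}. As written, the upper bound is a plausible plan rather than a proof.
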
 

\begin{proof}
Suppose that $A\in \Z(\O_{\Lambda_{\mathsf d}(\mathfrak n, \mathds 1_{\mathsf k})})$. Then by Proposition \ref{P:genO}, one has $A\in \Z(\O_{\Lambda_{\mathsf d}})$. 
But 
$\Z(\O_{\Lambda_{\mathsf d}})=\ca(V_{\mathbf p, \mathbf q}: (\mathbf p, \mathbf q)\in \bN^{\mathsf k}\times \bN^{\mathsf k},\ \mathfrak n^{\mathbf p}=\mathfrak n^{\mathbf q})$ (\cite{DY09}). 

It remains to show that each $V_{\mathbf p, \mathbf q}$ is indeed in $\Z(\O_{\Lambda_{\mathsf d}(\mathfrak n, \mathds 1_{\mathsf k})})$. 
This has been proved in \cite[Theorem 4.9]{DY09}. In what follows, we prove this by invoking Lemma \ref{L:levtr}. 

We first show that $u_a$ commutes with $V_{\mathbf p, \mathbf q}$. 
Then on one hand one has 
\begin{align*}
a\cdot (\mu \nu)=a\cdot(\phi_{\mathbf p, \mathbf q}(\mu) \phi_{\mathbf p, \mathbf q}^{-1}(\nu))
\implies a\cdot \mu a|_\mu\cdot \nu = a\cdot \phi_{\mathbf p, \mathbf q}(\mu) a|_{\phi(\mu)}\cdot \phi_{\mathbf p, \mathbf q}^{-1}(\nu).
\end{align*}
On the other hand, we have 
\[
a\cdot \mu a|_\mu\cdot \nu = \phi_{\mathbf p, \mathbf q}(a\cdot \mu) \phi_{\mathbf p, \mathbf q}^{-1}(a|_\mu\cdot \nu). 
\]
Thus 
\begin{align}
\nonumber
& a^{-1}\cdot \phi_{\mathbf p, \mathbf q}(\mu) a^{-1}|_{\phi_{\mathbf p, \mathbf q}(\mu)}\cdot \phi_{\mathbf p, \mathbf q}^{-1}(\nu)
=\phi_{\mathbf p, \mathbf q}(a^{-1}\cdot \mu) \phi_{\mathbf p, \mathbf q}^{-1}(a^{-1}|_\mu\cdot \nu)\\
\label{E:equi}
\implies & a^{-1}\cdot \phi_{\mathbf p, \mathbf q}(\mu) 
=\phi_{\mathbf p, \mathbf q}(a^{-1}\cdot \mu) \quad \text{and} \quad a^{-1}|_{\phi_{\mathbf p, \mathbf q}(\mu)}\cdot \phi_{\mathbf p, \mathbf q}^{-1}(\nu)=\phi_{\mathbf p, \mathbf q}^{-1}(a^{-1}|_\mu\cdot \nu)
\end{align}
Then one can easily see $g\cdot \phi_{\mathbf p, \mathbf q}(\mu) = \phi_{\mathbf p, \mathbf q}(g\cdot \mu)$ for any $g\in \bZ$. 
Similarly, one gets from the 2nd identity of \eqref{E:uv} that $g\cdot \phi_{\mathbf p, \mathbf q}^{-1}(\nu) = \phi_{\mathbf p, \mathbf q}^{-1}(g\cdot \nu)$ for any $g\in \bZ$. 
Thus the 2nd identity of \eqref{E:equi} yields $a^{-1}|_{\phi_{\mathbf p, \mathbf q}(\mu)}=a^{-1}|_\mu$, and so 
$
(a^{-1}|_{\phi_{\mathbf p, \mathbf q}(\mu)})^{-1}=(a^{-1}|_\mu)^{-1}.
$
Therefore 
\begin{align}
\label{E:equi2}
a|_{a^{-1}\cdot \phi_{\mathbf p, \mathbf q}(\mu)}=a|_{a^{-1}\cdot \mu}.
\end{align}

Now compute 
\begin{align*}
u_a V_{\mathbf p, \mathbf q}&
= \sum_{d(\mu)=\mathbf p} s_{a\cdot \mu}u_{a|_\mu} s_{\phi_{\mathbf p, \mathbf q}(\mu)}^*
= \sum_{d(\mu)=\mathbf p} s_\mu u_{a|_{a^{-1}\cdot \mu}} s_{\phi_{\mathbf p, \mathbf q}(a^{-1}\cdot\mu)}^*\\
&= \sum_{d(\mu)=\mathbf p} s_\mu u_{a|_{a^{-1}\cdot \phi_{\mathbf p, \mathbf q}(\mu)}} s_{a^{-1}\cdot\phi_{\mathbf p, \mathbf q}(\mu)}^* \ 
(\text{from } \eqref{E:equi}\text{ and }\eqref{E:equi2})\\
&= \sum_{d(\mu)=\mathbf p} s_\mu u_{({a^{-1}|_ {\phi_{\mathbf p, \mathbf q}(\mu)})^{-1}}} s_{a^{-1}\cdot\phi_{\mathbf p, \mathbf q}(\mu)}^*
= \left(\sum_{d(\mu)=\mathbf p} s_\mu s_{\phi_{\mathbf p, \mathbf q}(\mu)}^*\right)u_a\\
&= V_{\mathbf p, \mathbf q}\, u_a . 
\end{align*}
Then 
\begin{align*}
(u_{a^m} s_\alpha s_\beta^*) V_{\mathbf p, \mathbf q}
= u_{a^m} V_{\mathbf p, \mathbf q} s_\alpha s_\beta^*
= V_{\mathbf p, \mathbf q} (u_{a^m} s_\alpha s_\beta^*).
\end{align*}
By Lemma \ref{L:levtr} (ii), 
$\sum\limits_{d(\mu)=\mathbf p} s_\mu s_{\phi_{\mathbf p, \mathbf q}(\mu)}^*\in \Z(\O_{\Lambda_{\mathsf d}(\mathfrak n, \mathds 1_{\mathsf k})})$. 
\end{proof}

\bigskip

For $\Bm\in \bN^{\mathsf{k}}$, 
let $\F_{\Bm}:=\ol\spn\{s_\mu a^n s_\nu^*: \mu,\nu\in \Lambda_{(\mathfrak n, \mathds 1_{\mathsf k})} \text{ with } d(\mu)=d(\nu)=\Bm, n\in \bZ\}$, 
and so $\F=\ol{\bigcup\limits_{m\in \bN^{\mathsf{k}}} \F_{\Bm}}^{\|\cdot\|}$. 
Also notice that, due to the Cuntz-Krieger relations, we have $\F=\lim\limits_{\stackrel{\longrightarrow}{m\in \bN}}\F_{m\mathds{1}_{\mathsf k}}$. 

Let $d:=\prod\limits_{i=1}^{\mathsf{k}}\prod\limits_{p\in \P, \, p|n_i} p$, the product of all primes dividing some $n_i$'s. 

\begin{lem}
\label{L:BD}
$\F$ is a Bunce-Deddens algebra of type $d^\infty$. In particular, $\F$ has a unique faithful tracial state. 
\end{lem}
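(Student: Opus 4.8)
The plan is to realize $\F$ as an inductive limit of circle algebras and to recognize the connecting maps as the standard Bunce--Deddens embeddings. Since $\F=\varinjlim_m \F_{m\mathds 1_{\mathsf k}}$, the first step is to identify each building block. Writing $N:=\prod_{i=1}^{\mathsf k}n_i$, so that $|\Lambda_{\mathsf d}^{m\mathds 1_{\mathsf k}}|=N^m$, I claim
\[
\F_{m\mathds 1_{\mathsf k}}\cong M_{N^m}(\rC(\bT)),\qquad s_\mu u_{a^j}s_\nu^*\longmapsto E_{\mu\nu}\otimes z^j,
\]
where $\{E_{\mu\nu}\}$ are matrix units indexed by $\Lambda_{\mathsf d}^{m\mathds 1_{\mathsf k}}$ and $z$ generates $\rC(\bT)$. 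This is a $*$-isomorphism: the multiplication rule $(s_\mu u_{a^j}s_\nu^*)(s_\alpha u_{a^l}s_\beta^*)=\delta_{\nu\alpha}\,s_\mu u_{a^{j+l}}s_\beta^*$ (using $s_\nu^* s_\alpha=\delta_{\nu\alpha}$ when $d(\nu)=d(\alpha)$) matches that of $M_{N^m}\otimes\bC[\bZ]$, the $*$-operation is respected, and injectivity follows by compressing with $s_\alpha^*(\cdot)s_\beta$ together with the faithful embedding of $\bZ$ (Proposition~\ref{P:genO}(iii), valid by pseudo-freeness and amenability). Completing the norm, $\bC[\bZ]$ becomes $\rC(\bT)$.

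Next I would compute the connecting maps. Inserting $I=\sum_{d(e)=\mathds 1_{\mathsf k}}s_e s_e^*$ and using $u_{a^j}s_e=s_{a^j\cdot e}u_{a^j|_e}$ gives
\[
s_\mu u_{a^j}s_\nu^*=\sum_{d(e)=\mathds 1_{\mathsf k}}s_{\mu(a^j\cdot e)}\,u_{a^j|_e}\,s_{\nu e}^*,
\]
so under the identifications $\F_{m\mathds 1_{\mathsf k}}\hookrightarrow\F_{(m+1)\mathds 1_{\mathsf k}}$ becomes the unital injective map $E_{\mu\nu}\otimes z^j\mapsto\sum_e E_{\mu(a^j\cdot e),\,\nu e}\otimes z^{c(j,e)}$, where $a^j|_e=a^{c(j,e)}$. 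On $K$-theory this acts as multiplication by $N$ on $K_0\cong\bZ$ (the rank-one projection $E_{\mu\mu}\otimes1$ maps to $\sum_e E_{\mu e,\mu e}\otimes 1$) and as the identity on $K_1\cong\bZ$. The latter is the crux: taking the circle generator $z_m:=\sum_\mu s_\mu u_a s_\mu^*\leftrightarrow I\otimes z$ (winding number $N^m$), its image is a weighted permutation unitary whose determinant has winding number $N^m\cdot\#\{e:d(e)=\mathds 1_{\mathsf k},\ a|_e=a\}$. A direct computation with the restriction cocycle shows $a|_e=a$ precisely for the single "all-last" edge $e=\Bx^1_{n_1-1}\cdots\Bx^{\mathsf k}_{n_{\mathsf k}-1}$ and $a|_e=1$ otherwise, so the winding is again $N^m$, forcing $K_1$ of the connecting map to be the identity. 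Hence $K_0(\F)=\varinjlim(\bZ,\times N)=\bZ[1/d]$ and $K_1(\F)=\bZ$.

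Finally I would upgrade this $K$-theoretic bookkeeping to an actual identification with a Bunce--Deddens algebra. The connecting maps are unital injective homomorphisms between circle algebras whose $K_0$-multiplicity is $N$, whose $K_1$-action is trivial, and whose eigenvalue/winding data coincide with those of the standard Bunce--Deddens embeddings $M_{N^m}(\rC(\bT))\hookrightarrow M_{N^{m+1}}(\rC(\bT))$; invoking the uniqueness (up to approximate unitary equivalence) of such maps between circle algebras, one concludes that the inductive system is exactly the Bunce--Deddens system of matrix sizes $N^m$. Therefore $\F$ is the Bunce--Deddens algebra attached to the supernatural number $\lim_m N^m=\prod_{p\mid N}p^\infty=d^\infty$, i.e., of type $d^\infty$. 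The last assertion is then immediate: every Bunce--Deddens algebra is simple and unital with a unique tracial state, which is automatically faithful.

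The main obstacle is this last step: passing from the correct $K$-theory to a genuine Bunce--Deddens identification requires controlling the connecting maps up to approximate unitary equivalence, equivalently checking that the winding/eigenvalue data equidistribute so that the limit is a simple algebra with a unique trace rather than a more general AT-algebra. The computation isolating the unique degree-$\mathds 1_{\mathsf k}$ edge with $a|_e=a$ is precisely what pins down the winding and makes this identification go through.
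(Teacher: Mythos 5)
Your proposal is correct and follows essentially the same route as the paper: both identify $\F_{m\mathds 1_{\mathsf k}}\cong M_{\mathfrak n^{m\mathds 1_{\mathsf k}}}(\rC(\bT))$ via $s_\mu u_{a^j}s_\nu^*\mapsto E_{\mu\nu}\otimes z^j$, compute the connecting maps by inserting $\sum_{d(e)=\mathds 1_{\mathsf k}}s_e s_e^*$, and isolate the single degree-$\mathds 1_{\mathsf k}$ path $\Bx^1_{n_1-1}\cdots\Bx^{\mathsf k}_{n_{\mathsf k}-1}$ on which the restriction cocycle is nontrivial. The only divergence is the finishing move: the paper concludes directly by writing $u_a\in\F_{m\mathds 1_{\mathsf k}}$ as a cyclic weighted shift with a single $u_a$ corner entry, so the inductive system is literally the standard Bunce--Deddens system of type $d^\infty$, whereas your detour through $K$-theory and approximate unitary equivalence of homomorphisms between circle algebras is a valid but heavier way to certify the same identification.
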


\begin{proof}
The proof below is motivated by the proof of \cite[Theorem 3.16]{LY212}.

For $\Bm \in \bN^{\mathsf k}$, let $\{e_{\mu,\nu}\}_{\mu,\nu\in \Lambda^{\Bm}_{\mathsf d}}$ be the matrix entries of $K(\ell^2(\Lambda_{\mathsf d}^\Bm)).$\footnote{In our case, 
$K(\ell^2(\Lambda_{\mathsf d}^{\Bm}))\cong M_{\mathfrak n^{\Bm}}(\bC)$.} 
To simplify our notation, let $\{a^n\}_{n\in \bZ}$ be the generating unitaries of $\ca(\bZ)$. Clearly, there is a homomorphism
\[
\rho_1: K(\ell^2(\Lambda_{\mathsf d}^{m\mathds{1}_{\mathsf k}}))\to \F_{m\mathds{1}_{\mathsf k}},\ e_{\mu,\nu}\mapsto s_\mu s_\nu^*,
\]
and 
\[
\rho_2:\rC(\bT)\to \F_{m\mathds{1}_{\mathsf k}},\ a^n\mapsto \sum_{\mu\in\Lambda_{\mathsf d}^{m\mathds{1}_{\mathsf k}}} s_\mu u_{a^n} s_\mu^*. 
\]
Some simple calculations show the images of $\rho_1$ and $\rho_2$ commute. By \cite[Theorem 6.3.7]{Mur04}, there is a homomorphism 
$\rho:  K(\ell^2(\Lambda_{\mathsf d}^{m\mathds{1}_{\mathsf k}}))\otimes \ca(\bZ)\to \F_{m\mathds{1}_{\mathsf k}}$\footnote{The tensor product $\otimes$ here could be either $\otimes_{\text{max}}$ or $\otimes_{\text{min}}$
as both factors are amenable.} satisfying $\rho(e_{\mu, \nu}\otimes a^n)= s_\mu u_{a^n} s_\nu^*$. It is not hard to see that $\rho$ is also invertible and so an isomorphism.

Set $\mu_0:=\Bx_0^1\cdots \Bx^{\mathsf k}_0$ and $\mu_{\mathfrak n -\mathds{1}}:=\Bx^1_{n_1-1}\cdots \Bx^{\mathsf k}_{n_{\mathsf k}-1}$. 
Embed $K(\ell^2(\Lambda_{\mathsf d}^{m\mathds 1_{\mathsf k}}))\otimes \ca(\bZ)$ into $K(\ell^2(\Lambda_{\mathsf d}^{(m+1)\mathds 1_{\mathsf k}}))\otimes \ca(\bZ)$ as follows:
\[
e_{\mu, \nu}\otimes a^n\mapsto \sum\limits_{\alpha\in \Lambda_{\mathsf d}^{\mathds{1}}}e_{\mu a^n\cdot \alpha, \nu\alpha}\otimes a^n|_\alpha. 
\]
Notice that 
$a|_\alpha=a$ if $\alpha=\mu_{\mathfrak n -\mathds{1}_{\mathsf k}}$, 
and $a|_\alpha=0$, otherwise. 
 
Now we have 
 \begin{align*}
 u_a
&=u_a \sum_{\mu\in\Lambda^{m\mathds{1}_{\mathsf k}}} s_\mu s_\mu^*
   =\sum_{\mu\in\Lambda_{\mathsf d}^{m\mathds{1}_{\mathsf k}}} s_{a\cdot\mu} u_{a|_\mu} s_\mu^*\\
&=\sum_{\mu\ne \mu_{\mathfrak n-\mathds{1}_{\mathsf k}}} s_{a\cdot\mu} s_\mu^*+s_{a\cdot\mu_{\mathfrak n-\mathds{1}_{\mathsf k}}} u_a s_{\mu_{\mathfrak n-\mathds{1}_{\mathsf k}}}^*\\
&=\sum_{\mu\ne \mu_{\mathfrak n-\mathds{1}_{\mathsf k}}} s_{a\cdot\mu} s_\mu^*+s_{\mu_0} u_a s_{\mu_{\mathfrak n-\mathds{1}_{\mathsf k}}}^*. 
\end{align*}

Therefore $\F$ is isomorphic to a Bunce-Deddens algebra of type of $d^\infty$, and so it has a unique faithful tracial state (\cite{Dav96}). 
\end{proof}

\begin{rem}
It is worth mentioning the following: If $S:=\{n_1,\ldots, n_{\mathsf k}\}\subset \bN$ is a set of mutually coprime natural numbers, then $\F$ is isomorphic to $B_S$ in \cite{BOS18}.  
\end{rem}


By Lemma \ref{L:BD} and \cite{Dav96}, 
$\F$ has a unique faithful tracial state, say $\tau$, given by 
\[
\tau(s_\mu u_{a^n} s_\mu^*)=\mathfrak n^{-d(\mu)}\delta_{n,0}.
\]
Recall that $\Phi_{\mathbf 0}$ is the faithful conditional expectation from $\O_{\Lambda_{\mathsf d}(\mathfrak n, \mathds 1)}$ onto $\F$ via the gauge action $\gamma$
(see Subsection \ref{S:sssC*}). 
Then $\omega:=\tau\circ \Phi_{\mathbf 0}$ is a state of $\O_{\Lambda_{\mathsf d}(\mathfrak n, \mathds 1_{\mathsf k})}$. 
 

We recall the notion of KMS states from \cite{BR97} (also see \cite{CM08}), and give some basic properties of KMS states for 
$\O_{\Lambda_{\mathsf d}(\mathfrak n, \mathds 1_{\mathsf k})}$.

\begin{defn}
Let $A$ be a C*-algebra, $\alpha$ be an action of $\bR$ on $A$,
and $A^a$ be the set of all analytic elements of $A$. Let  $0<\beta<\infty$. A state $\tau$ of $A$ is called a \emph{KMS$_\beta$ state} of $(A,\mathbb{R},\alpha)$ if $\tau(xy)=\tau(y \alpha_{i\beta}(x))$ for all $x,y \in A^a$.
\end{defn}

Recall the gauge action $\gamma:\bT^{\mathsf k} \to \Aut(\O_{\Lambda_{\mathsf d}(\mathfrak n, \mathds 1_{\mathsf k})})$:
\[
\gamma_z(s_\mu)=z^{d(\mu)}s_\mu\text{ and } \gamma_z(u_g)=u_g\qforal  z \in \bT^{\mathsf k}, \mu \in \Lambda,g \in \bZ.
\]
Let $\mathsf{r}=(\ln n_1, \ldots, \ln n_{\mathsf k})\in \bR^{\mathsf k}$. Define a strongly continuous homomorphism 
$\alpha^{\mathsf{r}}:\mathbb{R} \to \Aut(\O_{\Lambda_\mathsf d(\mathfrak n,\mathds 1_{\mathsf k})})$ by 
$\alpha^{\mathsf{r}}_t:=\gamma_{e^{it\mathsf{r}}}$. Notice that, for $\mu,\nu \in \Lambda_\mathsf d(\mathfrak n, \mathds 1_{\mathsf k}), g \in \bZ$, 
the function $\mathbb{C} \to \O_{\Lambda_\mathsf d(\mathfrak n, \mathds 1)}$, 
$\xi \mapsto e^{i \xi \mathsf{r} \cdot (d(\mu)-d(\nu))}s_\mu u_g s_\nu^*$ is an entire function.
So $s_\mu u_g s_\nu^*$ is an analytic element. By Proposition~\ref{P:genO}, in order to check the KMS$_\beta$ condition, it is sufficient to check whether it is valid on the set 
$\{s_\mu u_g s_\nu^*:\mu,\nu \in \Lambda_\mathsf d(\mathfrak n, \mathds 1_{\mathsf k}), g\in \bZ\}$.
In this subsection, we study basic properties of KMS$_\beta$ states of the one-parameter dynamical system $(\mathcal{O}_{\Lambda_\mathsf d(\mathfrak n, \mathds 1)},\mathbb{R},\alpha^{\mathsf{r}})$.

\begin{lem}
\label{L:wKMS}
Suppose that $\{n_i: 1\le i\le \mathsf k\}$ is multiplicatively independent. Then 
$\omega$ is the unique KMS$_1$ state on $\O_{\Lambda_\mathsf d(\mathfrak n, \mathds 1_{\mathsf k})}$. 
\end{lem}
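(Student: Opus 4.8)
The plan is to treat existence and uniqueness separately. First observe that for a generator $x=s_\mu u_g s_\nu^*$ one has $\alpha^{\mathsf r}_i(x)=\gamma_{e^{-\mathsf r}}(x)=\mathfrak n^{-(d(\mu)-d(\nu))}x$, since $\gamma_z(x)=z^{d(\mu)-d(\nu)}x$ and $e^{-\mathsf r\cdot\mathbf p}=\mathfrak n^{-\mathbf p}$. Hence, by Proposition~\ref{P:genO}, a state $\phi$ is KMS$_1$ for $\alpha^{\mathsf r}$ if and only if
\[
\phi(xy)=\mathfrak n^{-(d(\mu)-d(\nu))}\,\phi(yx)\qquad\text{for all } x=s_\mu u_g s_\nu^*,\ y\in\O_{\Lambda_{\mathsf d}(\mathfrak n,\mathds 1_{\mathsf k})}.
\]

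For existence, I would check this identity for $\omega=\tau\circ\Phi_{\mathbf 0}$ on a pair of generators $x=s_\mu u_g s_\nu^*$ and $y=s_\alpha u_h s_\beta^*$. Because $\Phi_{\mathbf 0}$ annihilates every spanning element $s_\lambda u_k s_\rho^*$ with $d(\lambda)\neq d(\rho)$, both $\omega(xy)$ and $\omega(yx)$ vanish unless the degrees balance, i.e.\ $d(\mu)+d(\alpha)=d(\nu)+d(\beta)$; writing $e:=d(\mu)-d(\nu)=d(\beta)-d(\alpha)$ under this balance, a direct computation using the trace formula $\tau(s_\lambda u_{a^n}s_\lambda^*)=\mathfrak n^{-d(\lambda)}\delta_{n,0}$ produces precisely the scaling factor $\mathfrak n^{-e}$ relating $\tau(\Phi_{\mathbf 0}(xy))$ and $\tau(\Phi_{\mathbf 0}(yx))$. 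This verification is routine (the simplest cases $x=s_\mu,\,y=s_\mu^*$ and $x=s_\mu^*,\,y=s_\mu$ already exhibit the correct exponent), so I would only record the degree-balance bookkeeping and the resulting factor.

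For uniqueness, let $\phi$ be an arbitrary KMS$_1$ state. Since KMS states are invariant under their modular dynamics (\cite{BR97}), $\phi\circ\alpha^{\mathsf r}_t=\phi$ for all $t$. Here the hypothesis enters: as $\{n_i\}$ is \emph{multiplicatively independent}, $\{\ln n_i\}$ is rationally independent, so by Kronecker's theorem the one-parameter subgroup $\{e^{it\mathsf r}:t\in\bR\}$ is dense in $\bT^{\mathsf k}$; by strong continuity of $\gamma$, invariance under $\alpha^{\mathsf r}$ upgrades to invariance under the full gauge action, whence $\phi=\phi\circ\Phi_{\mathbf 0}$. Next, every $x\in\F$ is fixed by $\gamma$, hence by $\alpha^{\mathsf r}$, so $\alpha^{\mathsf r}_i(x)=x$ and the KMS condition reduces to $\phi(xy)=\phi(yx)$ for $x\in\F$; thus $\phi|_\F$ is a tracial state. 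By Lemma~\ref{L:BD}, $\F$ is a Bunce-Deddens algebra and so carries a unique tracial state $\tau$, giving $\phi|_\F=\tau$. Combining, $\phi=\phi\circ\Phi_{\mathbf 0}=\tau\circ\Phi_{\mathbf 0}=\omega$.

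I expect the conceptual crux to be the gauge-invariance step: multiplicative independence of $\{n_i\}$ is used exactly to make the modular flow $\{e^{it\mathsf r}\}$ dense in $\bT^{\mathsf k}$, which is what forces a KMS$_1$ state to factor through $\Phi_{\mathbf 0}$ and hence be pinned down by the unique trace on the core $\F$. The existence computation, while elementary, is the most calculation-heavy part and demands care in tracking the degree-balance condition that governs when $\Phi_{\mathbf 0}(xy)$ and $\Phi_{\mathbf 0}(yx)$ survive.
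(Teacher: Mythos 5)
Your argument is correct, but it takes a genuinely different route from the paper: the paper's proof is essentially a two-line appeal to the general classification of KMS states for self-similar $\mathsf k$-graph C*-algebras in \cite[Theorems 6.11 and 6.12]{LY19$_2$}, after recording the formula $\omega(s_\mu u_{a^n}s_\nu^*)=\delta_{\mu,\nu}\delta_{n,0}\mathfrak n^{-d(\mu)}$. You instead give a self-contained proof whose uniqueness half runs through (a) invariance of a KMS$_1$ state under the flow, (b) Kronecker density of $\{e^{it\mathsf r}\}$ in $\bT^{\mathsf k}$ — which is exactly equivalent to multiplicative independence of $\{n_i\}$, since a nonzero $\mathbf q\in\bZ^{\mathsf k}$ annihilates $\mathsf r$ iff $\mathfrak n^{\mathbf q}=1$ — forcing $\phi=\phi\circ\Phi_{\mathbf 0}$, and (c) the unique trace on the Bunce--Deddens core from Lemma \ref{L:BD}. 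This buys transparency about where the hypothesis enters (only in the density step; $\omega$ is KMS$_1$ regardless) and avoids importing the measure-theoretic machinery of \cite{LY19$_2$}, at the cost of having to verify existence by hand. That existence check, which you leave as ``routine,'' does go through: the essential point is the scaling identity $\tau(s_\mu y s_\mu^*)=\mathfrak n^{-d(\mu)}\tau(y)$ for $y\in\F$, which together with your degree-balance bookkeeping produces the factor $\mathfrak n^{-(d(\mu)-d(\nu))}$; it would be worth recording that identity explicitly if you write this up, since it is the one place the specific form of $\tau$ is used.
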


\begin{proof}
An easy calculation shows 
\begin{align*}
\omega(s_\mu u_{a^n} s_\nu^*)=\delta_{\mu, \nu} \delta_{n,0} \, \mathfrak n^{-d(\mu)}. 
\end{align*}
By \cite[Theorems 6.11 and 6.12]{LY19$_2$}, $\omega$ is the unique KMS$_1$ state of $\O_{\Lambda_\mathsf d(\mathfrak n, \mathds 1)}$ (also see \cite[Theorem 7.1]{LY19$_2$}). 
\end{proof}

As in \cite{Yan10, Yan12}, let $L^2(\O_{\Lambda_\mathsf d(\mathfrak n, \mathds 1_{\mathsf k})})$ be the GNS Hilbert space determined by the state $\omega$: 
$\langle A, B\rangle:=\omega(A^*B)$ for all $A, B\in \O_{\Lambda_\mathsf d(\mathfrak n, \mathds 1_{\mathsf k})}$. 
For $A \in  \O_{\Lambda_\mathsf d(\mathfrak n, \mathds 1_{\mathsf k})}$, we denote the left action of $A$ by $\pi (A): \pi(A)B = AB$ for all 
$B \in  \O_{\Lambda_\mathsf d(\mathfrak n, \mathds 1)}$. Let $ \O_{\Lambda_\mathsf d(\mathfrak n, \mathds 1_{\mathsf k})}^c$ stand for the algebra as the finite linear 
span of the generators $s_\mu u_g s_\nu^*$. 

Define
\begin{align*}
S(A):=A^*,\ F(s_\mu u_{a^n} s_\nu^*):=\mathfrak n^{d(\mu)-d(\nu)} s_\nu u_{a^{-n}} s_\mu^*, \qfor A\in  \O_{\Lambda_\mathsf d(\mathfrak n, \mathds 1_{\mathsf k})}^c. 
\end{align*}
Then $F=S^*$. Also, if 
\[
J(s_\mu u_{a^n} s_\nu^*):=\mathfrak n^{\frac{d(\mu)-d(\nu)}{2}} s_\nu u_{a^{-n}} s_\mu^*, \ 
\Delta (s_\mu u_{a^n} s_\nu^*):=\mathfrak n^{d(\nu)-d(\mu)} s_\mu u_{a^n} s_\nu^*,
\]
one has 
\[
S=J\Delta^{\frac{1}{2}}=\Delta^{-\frac{1}{2}} J,\ F=J\Delta^{-\frac{1}{2}}= \Delta^{\frac{1}{2}}J. 
\]

Let $\pi_\omega(\O_{\Lambda_d(\mathfrak n, \mathds 1_{\mathsf k})})''$ be the von Neumann algebra generated by the GNS representation of $\omega$. 
Then $\pi_\omega(\O_{\Lambda_d(\mathfrak n, \mathds 1_{\mathsf k})})''$ coincides with the left von Neumann algebra of 
$ \O_{\Lambda_\mathsf d(\mathfrak n, \mathds 1_{\mathsf k})}^c$.

The proof of the following theorem can now be easily adapted from \cite{Yan10, Yan12} combined with \cite{LY19$_1$, LY19$_2$}, and is left to the interested reader. 

\begin{thm}
\label{T:factor}
The following statements are equivalent:
\begin{itemize}
\item[(i)] $\O_{\Lambda_\mathsf d(\mathfrak n, \mathds 1_{\mathsf k})}$ is simple.
\item[(ii)] $\O_{\Lambda_{\mathsf d}}$ is simple.
\item[(iii)] $\{n_i: 1\le i\le \mathsf k\}$ is multiplicatively independent.
\item[(iv)] $\Lambda_{\mathsf d}(\mathfrak n, \mathds 1_{\mathsf k})$ is aperiodic. 
\item[(v)] The ambient $\mathsf k$-graph $\Lambda_{\mathsf d}$ is aperiodic. 
\item[(vi)] $\pi_\omega(\O_{\Lambda_\mathsf d(\mathfrak n, \mathds 1_{\mathsf k})})''$ is a factor.
\end{itemize}

When $\pi_\omega(\O_{\Lambda_\mathsf d(\mathfrak n, \mathds 1_{\mathsf k})})''$ is a factor, 
\begin{itemize}
\item
it  is an AFD factor of type III$_{\frac{1}{n}}$ if $\mathsf k=1$; and 
\item
it is an AFD factor of type III$_1$ if $\mathsf k\ge 2$.
\end{itemize}
\end{thm}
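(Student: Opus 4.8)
The plan is to establish the equivalences (i)--(v) by a periodicity bookkeeping, and then to handle the factoriality (vi) and the type using the modular data $S,F,J,\Delta$ already recorded, following \cite{Yan10, Yan12} combined with \cite{LY19$_1$, LY19$_2$}.

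For the first block I would first note that, by \cite{LY19$_2$}, the self-similar graph $\Lambda_{\mathsf d}(\mathfrak n,\mathds 1_{\mathsf k})$ and the ambient graph $\Lambda_{\mathsf d}$ share the periodicity group $\{\mathbf p\in\bZ^{\mathsf k}:\mathfrak n^{\mathbf p}=1\}$, and this group is trivial precisely when $\{n_i\}$ is multiplicatively independent; this gives (iii)$\iff$(iv)$\iff$(v) simultaneously. The implication (i)$\iff$(iii) is exactly the simplicity criterion proved in \cite{LY19$_1$}, and (ii)$\iff$(v) is the standard simplicity criterion for $\mathsf k$-graph C*-algebras (a single-vertex graph is automatically cofinal, so $\O_{\Lambda_{\mathsf d}}$ is simple iff $\Lambda_{\mathsf d}$ is aperiodic). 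Collecting these yields the equivalence of (i)--(v).

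For (vi) the key point is that $\omega=\tau\circ\Phi_{\mathbf 0}$ is faithful (since $\tau$ is faithful on $\F$ by Lemma \ref{L:BD} and $\Phi_{\mathbf 0}$ is a faithful conditional expectation), so the GNS vector $\Omega$ is cyclic and separating, $\pi_\omega$ is faithful, and $\omega$ is a KMS$_1$ state for $\alpha^{\mathsf r}$ with modular objects $J,\Delta$ as displayed. I would prove (iii)$\Rightarrow$(vi) by invoking Lemma \ref{L:wKMS}: under multiplicative independence $\omega$ is the unique KMS$_1$ state, hence extremal in the KMS simplex, and an extremal KMS state yields a factor representation \cite{BR97}. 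For the converse (vi)$\Rightarrow$(iii) I would argue contrapositively: if $\{n_i\}$ is not multiplicatively independent, Proposition \ref{P:center} supplies a central unitary $V_{\mathbf p,\mathbf q}$ with $\mathbf p\ne\mathbf q$ and $\mathfrak n^{\mathbf p}=\mathfrak n^{\mathbf q}$; since $V_{\mathbf p,\mathbf q}$ has nonzero gauge degree $\mathbf p-\mathbf q$ we get $\omega(V_{\mathbf p,\mathbf q})=\tau(\Phi_{\mathbf 0}(V_{\mathbf p,\mathbf q}))=0$, so $\pi_\omega(V_{\mathbf p,\mathbf q})$ is a nonscalar (as $\pi_\omega$ is faithful, $V_{\mathbf p,\mathbf q}$ is a unitary with $\langle\Omega,\pi_\omega(V_{\mathbf p,\mathbf q})\Omega\rangle=0$) central element, whence $\pi_\omega(\O_{\Lambda_{\mathsf d}(\mathfrak n,\mathds 1_{\mathsf k})})''$ is not a factor.

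Finally, set $M:=\pi_\omega(\O_{\Lambda_{\mathsf d}(\mathfrak n,\mathds 1_{\mathsf k})})''$ and read the type off the modular operator. From the displayed formula $\Delta(s_\mu u_{a^n}s_\nu^*)=\mathfrak n^{d(\nu)-d(\mu)}s_\mu u_{a^n}s_\nu^*$, the point spectrum of $\Delta$ is the multiplicative subgroup $\{\mathfrak n^{\mathbf j}:\mathbf j\in\bZ^{\mathsf k}\}$ of $\bR_{>0}$, i.e. $\log\Delta$ has spectrum the additive group generated by $\{\ln n_i:1\le i\le\mathsf k\}$. For $\mathsf k=1$, writing $n=n_1$, this is the discrete group $(\ln n)\bZ$, so $S(M)=\{0\}\cup n^{\bZ}$ and $M$ is of type III$_{1/n}$. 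For $\mathsf k\ge 2$, multiplicative independence makes $\{\ln n_i\}$ rationally independent, so this subgroup of $\bR$ is free abelian of rank $\mathsf k\ge 2$, hence non-cyclic and therefore dense; thus $S(M)=[0,\infty)$ and $M$ is of type III$_1$. The ``AFD'' clause follows since $\O_{\Lambda_{\mathsf d}(\mathfrak n,\mathds 1_{\mathsf k})}$ is nuclear, so $M$ is injective and is the unique AFD factor of the stated type by Connes' classification. The main obstacle is the modular-theoretic part: verifying that the $J,\Delta$ displayed above really are the Tomita conjugation and modular operator for $(\O_{\Lambda_{\mathsf d}(\mathfrak n,\mathds 1_{\mathsf k})}^c,\Omega)$ and that $\mathrm{spec}(\Delta)$ computes the Connes $S$-invariant of $M$; this is precisely the analysis carried out in \cite{Yan10, Yan12}, which here only needs to be fed the product-of-odometers input from \cite{LY19$_1$, LY19$_2$}.
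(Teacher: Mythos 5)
Your proposal is correct and follows exactly the route the paper intends: the paper leaves the proof to the reader, stating it is "easily adapted from \cite{Yan10, Yan12} combined with \cite{LY19$_1$, LY19$_2$}," and your argument is precisely that adaptation (periodicity group from \cite{LY19$_2$} for (iii)$\iff$(iv)$\iff$(v), the simplicity criterion of \cite{LY19$_1$} for (i), extremality of the unique KMS$_1$ state plus the central unitaries $V_{\mathbf p,\mathbf q}$ for (vi), and the spectrum of $\Delta$ for the type). The only point to tighten is that $\Omega$ being separating for the von Neumann algebra follows from the KMS property of $\omega$ rather than from faithfulness of $\omega$ on the C*-algebra alone, but this is exactly the standard input from \cite{Yan10, Yan12} that you already flag.
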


\subsection{The case of $\mathfrak n=\mathds 1_{\mathsf k}$} 
\label{S:ni=1}

Since there is only one edge for each color $i$, in order to ease our notation, we write $\Bx_i$ (instead of the notation $\Bx_1^i$ used above) for this unique edge.
Thus $\Bx_i \Bx_j=\Bx_j \Bx_i$ for all $1\le i\ne j\le \mathsf k$. 
Since this is the unique commutation relation on $\Bx_i$'s, we denote $\Lambda_\mathsf d(\mathds 1_{\mathsf k}, \mathfrak m)$ simply as 
$\Lambda(\mathds 1_{\mathsf k}, \mathfrak m)$. The ambient $\mathsf k$-graph is just denoted as $\Lambda_{\mathds 1_{\mathsf k}}$. 

We should mention that it seems that the case of $\mathfrak n=\mathds 1_{\mathsf k}$ is also studied in \cite{LY19$_2$}. 
However, this is exactly the case which is completely ignored there. This could be due to two reasons: one is that $\Lambda_\mathsf d(\mathds 1_{\mathsf k}, \mathfrak m)$
is clearly \textit{not} locally faithful, which is the crucial property required in \cite{LY19$_2$}; the other is that this case, at first glance, seems too special. 

\begin{obs}
\label{Obs}
The observations below are obvious and will be used frequently later without any further mention. 
\begin{enumerate}
\item
For every $\mathbf p\in \bN^k$, $\Lambda_{\mathds 1_{\mathsf k}}^{\mathbf p}$ is a singleton: 
$\Lambda_{\mathds 1_{\mathsf k}}^{\mathbf p}=\{\Bx^{\mathbf p}:=\Bx_1^{p_1}\cdots \Bx_{\mathsf k}^{p_{\mathsf k}}\}$. 
Also, $s_\mu$ is a unitary for every $\mu\in\Lambda_{\mathds 1_{\mathsf k}}$. 

\item
$\Lambda(\mathds 1_{\mathsf k},\mathfrak m)$ is pseudo-free. In fact, some computations show 
$a^n|_\mu=a^{n{\mathfrak m}^{d(\mu)}}$ and so $a^n|_\mu =0\iff n=0$. 
\end{enumerate}
\end{obs}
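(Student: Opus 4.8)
The plan is to treat the two assertions separately, each reducing to the single-edge structure of $\Lambda_{\mathds 1_{\mathsf k}}$ together with the self-similar axioms of Definition \ref{D:ss}. For part (1), I would first note that $|\Lambda_{\mathds 1_{\mathsf k}}^{\epsilon_i}| = n_i = 1$ for each $i$, so there is a unique edge $\Bx_i$ of each colour. By the unique factorization property (equivalently, the normal form recalled in Subsection \ref{SS:single}), every $w \in \Lambda_{\mathds 1_{\mathsf k}}^{\mathbf p}$ factors uniquely as $w = \Bx_1^{p_1}\cdots\Bx_{\mathsf k}^{p_{\mathsf k}}$; since each $\Bx_i$ is the only edge of its colour, this forces $w = \Bx^{\mathbf p}$, so $\Lambda_{\mathds 1_{\mathsf k}}^{\mathbf p} = \{\Bx^{\mathbf p}\}$ is a singleton. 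For the unitarity of $s_\mu$, I would apply the Cuntz--Krieger relation (iii) of Definition \ref{D:O} at $\mathbf n = d(\mu)$: the sum collapses to the single term $s_{\Bx^{d(\mu)}} s_{\Bx^{d(\mu)}}^* = I$, and since every $s_\mu$ is by definition an isometry ($s_\mu^* s_\mu = I$), writing $\mu = \Bx^{d(\mu)}$ shows $s_\mu$ is a unitary.

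For part (2), the key is the restriction formula $a^n|_\mu = a^{n\mathfrak m^{d(\mu)}}$. I would first record the single-edge restriction $a^n|_{\Bx_i} = a^{nm_i}$, which is the $n_i = 1$, $p = 0$ instance of Lemma \ref{L:formula}(ii) (it also follows directly from axiom (5) of Definition \ref{D:ss} together with $a\cdot\Bx_i = \Bx_i$ by a short induction on $n$). Then, using axiom (3), namely $g|_{\mu\nu} = g|_\mu|_\nu$, and iterating, I would establish $a^n|_{\Bx_i^{p_i}} = a^{nm_i^{p_i}}$ by induction on $p_i$, and finally compose these across colours along the normal form $\Bx^{\mathbf p} = \Bx_1^{p_1}\cdots\Bx_{\mathsf k}^{p_{\mathsf k}}$ to obtain $a^n|_{\Bx^{\mathbf p}} = a^{n\prod_i m_i^{p_i}} = a^{n\mathfrak m^{\mathbf p}}$. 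Because the standing hypothesis is $m_i \ne 0$ for all $i$, we have $\mathfrak m^{d(\mu)} \ne 0$, so $a^n|_\mu = a^{n\mathfrak m^{d(\mu)}}$ equals the identity of $G$ if and only if $n = 0$.

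Pseudo-freeness then follows immediately from Definition \ref{D:pf}: if $a^n \cdot \mu = \mu$ and $a^n|_\mu = 1_G$, the second condition alone already forces $n = 0$ by the displayed equivalence, hence $a^n = 1_G$. I do not anticipate a genuine obstacle here; the only points demanding care are the bookkeeping in the inductive restriction computation (keeping the multi-index exponents straight) and the consistent reading of $0$ as the identity of $G = \bZ = \langle a \rangle$.
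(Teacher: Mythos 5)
Your proposal is correct and fills in exactly the computations the paper leaves implicit as ``obvious'': the singleton claim via unique factorization, unitarity via the collapse of the Cuntz--Krieger relation (iii) to a single term, the restriction formula $a^n|_{\Bx^{\mathbf p}}=a^{n\mathfrak m^{\mathbf p}}$ by iterating axiom (3) of Definition \ref{D:ss} from the single-edge case, and pseudo-freeness from $m_i\ne 0$. This is the same (and only natural) route the paper intends.
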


Our next goal is to show that $\Lambda_{\mathsf d}(\mathds 1_{\mathsf k}, \mathfrak m)$ has a canonical Cartan subalgebra. In particular, this canonical Cartan subalgebra is $\F$ if $\{ |m_i|: 1\le i\le \mathsf k\}$
 is multiplicatively independent; it properly contains $\F$, otherwise.

\begin{lem}
\label{L:F}
$\F=\ol{\spn}\{s_\mu u_{a^\ell} s_\mu^*: \mu \in \Lambda_{\mathds 1_{\mathsf k}}, \ell\in \bZ\}=\ol{\spn}\{s_\mu^n u_{a^\ell} s_\mu^{-n}: d(\mu)=\mathds{1_{\mathsf k}}, n\in \bN, \ell\in \bZ\}$. 
\end{lem}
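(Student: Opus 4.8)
The plan is to read off both equalities from the structure of $\F$ together with the two facts recorded in Observation~\ref{Obs}: every fibre $\Lambda_{\mathds 1_{\mathsf k}}^{\mathbf p}$ is the singleton $\{\Bx^{\mathbf p}\}$ (so each $s_{\Bx^{\mathbf p}}$ is a unitary), and $a^n|_{\mu}=a^{n\mathfrak m^{d(\mu)}}$. For the first equality, recall that $\F=\ran\Phi_{\mathbf 0}$ is the closed span of the standard generators $s_\mu u_{a^\ell}s_\nu^*$ with $d(\mu)=d(\nu)$ (Subsection~\ref{S:sssC*} together with Proposition~\ref{P:genO}(i)). Since $d(\mu)=d(\nu)$ forces $\mu=\nu=\Bx^{d(\mu)}$ by the singleton-fibre property, only the terms $s_\mu u_{a^\ell}s_\mu^*$ survive, giving $\F=\ol\spn\{s_\mu u_{a^\ell}s_\mu^*:\mu\in\Lambda_{\mathds 1_{\mathsf k}},\ell\in\bZ\}$.

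For the second equality, the inclusion $\supseteq$ is immediate, since each generator on the right is a generator on the left with $\mu=\Bx^{n\mathds 1_{\mathsf k}}$. For $\subseteq$, I would show that every generator $s_{\Bx^{\mathbf p}}u_{a^\ell}s_{\Bx^{\mathbf p}}^*$ on the left is in fact \emph{equal} to a generator on the right, so that the two closed spans coincide without any approximation. Fix $\mathbf p\in\bN^{\mathsf k}$ and choose $n\in\bN$ with $n\mathds 1_{\mathsf k}\ge\mathbf p$; set $\mathbf q:=n\mathds 1_{\mathsf k}-\mathbf p\in\bN^{\mathsf k}$, so that $\Bx^{n\mathds 1_{\mathsf k}}=\Bx^{\mathbf p}\Bx^{\mathbf q}$ and, by relation~(ii) of Definition~\ref{D:O}, $s_{\Bx^{n\mathds 1_{\mathsf k}}}=s_{\Bx^{\mathbf p}}s_{\Bx^{\mathbf q}}$. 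Inserting $s_{\Bx^{\mathbf q}}s_{\Bx^{\mathbf q}}^*=I$ (a unitary) and pushing $u_{a^\ell}$ across $s_{\Bx^{\mathbf q}}$ via the covariance relation~(iv) is the heart of the computation.

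The key identity is $u_{a^\ell}s_{\Bx^{\mathbf q}}=s_{\Bx^{\mathbf q}}u_{a^{\ell\mathfrak m^{\mathbf q}}}$, which follows from relation~(iv) because the action fixes the unique edge $\Bx^{\mathbf q}$ (singleton fibre) and $a^\ell|_{\Bx^{\mathbf q}}=a^{\ell\mathfrak m^{\mathbf q}}$ by Observation~\ref{Obs}(2). Substituting this into
\[
s_{\Bx^{\mathbf p}}u_{a^\ell}s_{\Bx^{\mathbf p}}^*=s_{\Bx^{\mathbf p}}u_{a^\ell}s_{\Bx^{\mathbf q}}s_{\Bx^{\mathbf q}}^*s_{\Bx^{\mathbf p}}^*
\]
and recombining with relation~(ii) yields
\[
s_{\Bx^{\mathbf p}}u_{a^\ell}s_{\Bx^{\mathbf p}}^*=s_{\Bx^{n\mathds 1_{\mathsf k}}}u_{a^{\ell\mathfrak m^{\mathbf q}}}s_{\Bx^{n\mathds 1_{\mathsf k}}}^*.
\]
Finally, writing $\mu:=\Bx^{\mathds 1_{\mathsf k}}=\Bx_1\cdots\Bx_{\mathsf k}$ and using commutativity of the $\Bx_i$ gives $\Bx^{n\mathds 1_{\mathsf k}}=\mu^n$, hence $s_{\Bx^{n\mathds 1_{\mathsf k}}}=s_\mu^n$ and $s_{\Bx^{n\mathds 1_{\mathsf k}}}^*=s_\mu^{-n}$ by unitarity. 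Thus $s_{\Bx^{\mathbf p}}u_{a^\ell}s_{\Bx^{\mathbf p}}^*=s_\mu^n u_{a^{\ell\mathfrak m^{\mathbf q}}}s_\mu^{-n}$ lies in the right-hand span, completing $\subseteq$. I do not anticipate a genuine obstacle here: everything reduces to the single covariance computation above, the only points requiring care being the choice of $n$ large enough to ``pad'' $\mathbf p$ up to a uniform diagonal level $n\mathds 1_{\mathsf k}$, and the bookkeeping of the exponent $\ell\mapsto\ell\mathfrak m^{\mathbf q}$ (well defined and nonzero since all $m_i\ne 0$).
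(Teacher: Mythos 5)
Your proof is correct and uses the same mechanism as the paper: exploit the singleton fibres to reduce to $\mu=\nu$, then insert a unitary $s_{\Bx^{\mathbf q}}s_{\Bx^{\mathbf q}}^*$ and push $u_{a^\ell}$ across via the covariance relation $u_{a^\ell}s_{\Bx^{\mathbf q}}=s_{\Bx^{\mathbf q}}u_{a^{\ell\mathfrak m^{\mathbf q}}}$. The only (cosmetic) difference is that the paper equalizes the exponents of the $\Bx_i$ one pair at a time, whereas you pad all coordinates up to a common diagonal level $n\mathds 1_{\mathsf k}$ in a single step, which is if anything slightly cleaner.
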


\begin{proof}
It is known and easy to see that 
$\F=\ol\spn\{s_\mu u_{a^m} s_\nu^*\mid \mu,\nu\in \Lambda_{\mathds 1_{\mathsf k}}, d(\mu)=d(\nu), m\in \bZ\}$. 
It follows from Observation \ref{Obs} that $d(\mu)=d(\nu)$ forces $\mu=\nu$,  say equal to $\Bx^{\mathbf n}$ for some $\mathbf n\in \bN^{\mathsf k}$. 
WLOG, we assume that $n_1-n_2=l \ge 0$. Then
\begin{align*}
s_{\Bx^{\mathbf n}} u_{a^m} s_{\Bx^{\mathbf n}}^{-1}
=&s_{\Bx_3^{n_3}\cdots \Bx_{\mathsf k}^{n_{\mathsf k}}} s_{\Bx_1^{n_1}} s_{\Bx_2^{n_2}}  u_{a^m} s_{\Bx_2^{n_2}}^{-1} s_{\Bx_1^{n_1}} ^{-1} s_{\Bx_3^{n_3}\cdots \Bx_{\mathsf k}^{n_{\mathsf k}}}^{-1}\\ 
=&s_{\Bx_3^{n_3}\cdots \Bx_{\mathsf k}^{n_{\mathsf k}}} s_{\Bx_1^{n_1}} s_{\Bx_2^{n_2}} s_{\Bx_2^\ell} u_{a^m|_{\Bx_2^\ell}} s^{-1}_{\Bx_2^\ell} s_{\Bx_3^{n_3}\cdots \Bx_{\mathsf k}^{n_{\mathsf k}}}^{-1} s_{\Bx_2^{n_2}}^{-1} s_{\Bx_1^{n_1}}^{-1}\\
=&s_{\Bx_3^{n_3}\cdots \Bx_{\mathsf k}^{n_{\mathsf k}}} s_{\Bx_1^{n_1}} s_{\Bx_2^{n_1}} u_{a^m|_{\Bx_2^\ell}}  s_{\Bx_3^{n_3}\cdots \Bx_{\mathsf k}^{n_{\mathsf k}}}^{-1} s_{\Bx_2^{n_1}}^{-1} s_{\Bx_1^{n_1}}^{-1}.
\end{align*}
After repeating this process, all the $\Bx_i$'s will have the same exponent. 
\end{proof}

\begin{lem}
\label{L:FAbe}
$\F$ is commutative. 
\end{lem}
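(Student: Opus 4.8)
The plan is to exploit the second description of $\F$ from Lemma \ref{L:F}, namely $\F=\ol{\spn}\{s_\mu^n u_{a^\ell}s_\mu^{-n}:d(\mu)=\mathds 1_{\mathsf k},\,n\in\bN,\,\ell\in\bZ\}$, and to reduce commutativity of $\F$ to commutativity of its spanning generators. Fix the single path $\mu:=\Bx^{\mathds 1_{\mathsf k}}$ and abbreviate $s:=s_\mu$. By Observation \ref{Obs}(1) each $\Lambda_{\mathds 1_{\mathsf k}}^{\mathbf p}$ is a singleton, so $s$ is a \emph{unitary}, $s^n=s_{\mu^n}$ with $d(\mu^n)=n\mathds 1_{\mathsf k}$, and it suffices to show that $A_{n,\ell}:=s^n u_{a^\ell}s^{-n}$ and $A_{m,j}:=s^m u_{a^j}s^{-m}$ commute for all $n,m\in\bN$ and $\ell,j\in\bZ$.

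First I would isolate the single commutation relation that drives everything. Since $\Lambda_{\mathds 1_{\mathsf k}}^{n\mathds 1_{\mathsf k}}$ is a singleton, the automorphism $a^\ell$ necessarily fixes $\mu^n$, so relation (iv) of Definition \ref{D:O}, together with Observation \ref{Obs}(2) (which gives $a^\ell|_{\mu^n}=a^{\ell M^n}$, where $M:=\mathfrak m^{\mathds 1_{\mathsf k}}=\prod_{i=1}^{\mathsf k}m_i$), yields
\[
u_{a^\ell}s^n=s^n u_{a^{\ell M^n}} \qforal n\in\bN,\ \ell\in\bZ.
\]
Conjugating (equivalently, taking adjoints and relabelling, as all elements involved are unitaries) produces the companion identity $s^{-n}u_{a^\ell}=u_{a^{\ell M^n}}s^{-n}$, which is exactly what is needed to slide the $u$'s past negative powers of $s$.

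With these two identities in hand the verification is a direct bookkeeping of exponents. Assuming without loss of generality that $n\le m$ and using that $u_{a^x}u_{a^y}=u_{a^{x+y}}=u_{a^y}u_{a^x}$ by relation (i) of Definition \ref{D:O} (since $\bZ$ is abelian), I would compute
\[
A_{n,\ell}A_{m,j}=s^n u_{a^\ell}s^{m-n}u_{a^j}s^{-m}=s^m u_{a^{\ell M^{m-n}+j}}s^{-m},
\]
and, symmetrically, using the companion identity to move $u_{a^\ell}$ left past $s^{-(m-n)}$,
\[
A_{m,j}A_{n,\ell}=s^m u_{a^j}s^{-(m-n)}u_{a^\ell}s^{-n}=s^m u_{a^{j+\ell M^{m-n}}}s^{-m}.
\]
Since $\ell M^{m-n}+j=j+\ell M^{m-n}$, the two products coincide, so the generators commute; passing to closed linear spans then gives that $\F$ is commutative.

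I do not expect a genuine obstacle: once the relation $u_{a^\ell}s^n=s^n u_{a^{\ell M^n}}$ is isolated, commutativity is forced. The only point requiring care is getting the exponent $M^{m-n}$ right and applying the relation in the correct direction when a negative power $s^{-(m-n)}$ appears. Working with the second, single-generator form of $\F$ from Lemma \ref{L:F} rather than the first is precisely what keeps this bookkeeping clean, since it removes the need to first reconcile paths $\mu=\nu$ of a common but arbitrary degree $\mathbf n\in\bN^{\mathsf k}$.
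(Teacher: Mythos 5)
Your proof is correct and takes essentially the same route as the paper's: one verifies that the spanning generators of $\F$ supplied by Lemma \ref{L:F} commute by a direct computation with the covariance relation $u_g s_\mu = s_{g\cdot\mu}u_{g|_\mu}$ and the fact that the action fixes every path. The only difference is that you work with the second (single-unitary) spanning set from Lemma \ref{L:F}, which replaces the paper's manipulation of restriction cocycles by explicit integer exponents $\ell M^{m-n}$; the bookkeeping checks out.
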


\begin{proof}
Compute 
\begin{align*}
(s_\mu u_{a^l} s_\mu^*)(s_\nu u_{a^n} s_\nu^*)
&=s_\mu u_{a^l} s_\nu s_\mu^* u_{a^n} s_\nu^*\ (\text{as $s_\mu^*s_\nu= s_\nu s_\mu^*$})\\
&=s_\mu s_\nu  u_{a^l|_{\nu}} (u_{a^{-n}} s_\mu)^* s_\nu^*\ (\text{as $a^l\cdot \nu =\nu$})\\
&=s_\mu s_\nu u_{a^l|_{\nu}}  (u_{a^{-n}}|_{\mu})^{-1} s_\mu^* s_\nu^*\\
&=s_\mu s_\nu u_{a^l|_{\nu}}  u_{a^n|_{a^{-n}\cdot \mu}} s_\mu^* s_\nu^*\\
&=s_\mu s_\nu u_{a^l|_{\nu}}  u_{a^n|_\mu} s_\mu^* s_\nu^*\ (\text{as $a^{-n}\cdot \mu =\mu$})\\
&=(s_\nu u_{a^n} s_\nu^*)(s_\mu u_{a^l} s_\mu^*). 
\end{align*}
It now follows from Lemma \ref{L:F} that $\F$ is commutative. 
\end{proof}

\begin{lem}
\label{L:F'}
 $\F'=\ol{\spn}\{s_\mu u_{a^n} s_\nu^*:\mu,\nu\in\Lambda_\mathds 1 \text{ with } \mathfrak m^{d(\mu)}=\mathfrak m^{d(\nu)},\ n\in \bZ\}$.
 \end{lem}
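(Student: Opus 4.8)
The plan is to split $\F'$ along the gauge action $\gamma$ and then, on each spectral subspace, reduce the commutation condition to an equation living inside the commutative algebra $\F$.

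First I set up notation and the one relation that drives everything. Since each $\Lambda_{\mathds 1_{\mathsf k}}^{\mathbf p}$ is a singleton (Observation \ref{Obs}), every $s_\mu$ is unitary; write $S_{\mathbf p}:=s_{\Bx^{\mathbf p}}$ and $W:=u_a$. Combining relation (iv) of Definition \ref{D:O} with Observation \ref{Obs} (so $a^n\cdot\Bx^{\mathbf p}=\Bx^{\mathbf p}$ and $a^n|_{\Bx^{\mathbf p}}=a^{n\mathfrak m^{\mathbf p}}$) yields the key rule $S_{\mathbf p}^{-1}W^nS_{\mathbf p}=W^{n\mathfrak m^{\mathbf p}}$. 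Put $z_N:=S_{N\mathds 1_{\mathsf k}}WS_{N\mathds 1_{\mathsf k}}^{-1}$ and $M:=\mathfrak m^{\mathds 1_{\mathsf k}}=\prod_i m_i$; by Lemma \ref{L:F} one has $\F=\ca(z_N:N\ge 0)$, which is commutative (Lemma \ref{L:FAbe}), and the rule gives $z_N=z_{N+1}^M$. For $\mathbf r\in\bN^{\mathsf k}$ let $\Theta_{\mathbf r}:=\Ad S_{\mathbf r}^{-1}$ restricted to $\F$; the same computation shows $\Theta_{\mathbf r}(z_N^\ell)=z_N^{\ell\mathfrak m^{\mathbf r}}$, so $\Theta_{\mathbf r}$ is a unital $*$-endomorphism of $\F$ depending on $\mathbf r$ only through $\mathfrak m^{\mathbf r}$.

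Next I reduce to homogeneous elements. As $\gamma$ fixes $\F$ pointwise, $\F'$ is $\gamma$-invariant, so each $\Phi_{\mathbf n}$ maps $\F'$ into $\F'\cap\ran\Phi_{\mathbf n}$ and $\F'=\ol\spn\bigcup_{\mathbf n\in\bZ^{\mathsf k}}(\F'\cap\ran\Phi_{\mathbf n})$. Fix $\mathbf n$ and write $\mathbf n=\mathbf n_+-\mathbf n_-$ with $\mathbf n_\pm\ge 0$ of disjoint support. For $A\in\ran\Phi_{\mathbf n}$ the element $B:=S_{\mathbf n_+}^{-1}AS_{\mathbf n_-}$ has degree $0$, hence $B\in\F$ and $A=S_{\mathbf n_+}BS_{\mathbf n_-}^{-1}$. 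Substituting into $Af=fA$ and conjugating by $S_{\mathbf n_+}^{-1}(\cdot)S_{\mathbf n_-}$, membership $A\in\F'$ becomes $B\,\Theta_{\mathbf n_-}(f)=\Theta_{\mathbf n_+}(f)\,B$ for all $f\in\F$; since $\F$ is commutative this is
\begin{equation*}
B\big(\Theta_{\mathbf n_+}(f)-\Theta_{\mathbf n_-}(f)\big)=0\qquad(f\in\F).\tag{$\ast$}
\end{equation*}
If $\mathfrak m^{\mathbf n}=1$ then $\mathfrak m^{\mathbf n_+}=\mathfrak m^{\mathbf n_-}$, so $\Theta_{\mathbf n_+}=\Theta_{\mathbf n_-}$ and $(\ast)$ holds for every $B\in\F$; thus $\ran\Phi_{\mathbf n}\subseteq\F'$. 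Since $\mathfrak m^{d(\mu)}=\mathfrak m^{d(\nu)}$ is exactly $\mathfrak m^{d(\mu)-d(\nu)}=1$, this already gives the inclusion $\supseteq$.

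The main obstacle is the converse: if $\mathfrak m^{\mathbf n}\ne 1$ and $A=S_{\mathbf n_+}BS_{\mathbf n_-}^{-1}\in\F'$, I must force $B=0$. Taking $f=z_N$ in $(\ast)$ gives $B(z_N^{a_+}-z_N^{a_-})=0$ with $a_\pm:=\mathfrak m^{\mathbf n_\pm}$; multiplying by the unitary $z_N^{-a_-}$ yields $B(z_N^{k}-1)=0$ for all $N\ge 0$, where $k:=a_+-a_-\ne 0$. To convert these into $B=0$ I bring in the global structure of $\F$. Identify $\F\cong C(X)$; since $\F=\varinjlim\ca(z_N)$ with $\ca(z_N)\cong C(\bT)$ (here $\sigma(W)=\bT$ because $\ca(\bZ)$ embeds by Proposition \ref{P:genO}(3), as $(\bZ,\Lambda_{\mathds 1_{\mathsf k}})$ is pseudo-free) and connecting embeddings $z_N\mapsto z_{N+1}^M$ dual to $w\mapsto w^M$, the spectrum $X=\varprojlim(\bT,w\mapsto w^M)$ is the $M$-adic solenoid (or $\bT$ when $|M|=1$), a compact abelian group with faithful Haar measure $\lambda$. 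Let $\zeta_N\in C(X,\bT)$ be the Gelfand transform of $z_N$, a surjective continuous character, and write $B\leftrightarrow b$. Then $B(z_N^k-1)=0$ says $\{b\ne 0\}\subseteq\zeta_N^{-1}(\bT[k])$, where $\bT[k]:=\{w\in\bT:w^k=1\}$ is finite. As $(\zeta_N)_*\lambda$ is Haar measure on $\bT$, the set $\zeta_N^{-1}(\bT[k])$ is $\lambda$-null; being open and null, $\{b\ne 0\}$ is empty, so $B=0$ and $A=0$. Hence $\F'\cap\ran\Phi_{\mathbf n}=0$ whenever $\mathfrak m^{\mathbf n}\ne 1$, which gives $\subseteq$ and completes the identification of $\F'$.
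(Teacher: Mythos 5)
Your proof is correct, and although it shares the paper's skeleton --- decompose $\F'$ into spectral subspaces of the gauge action and test commutation against generators of $\F$ --- the decisive step is genuinely different. The paper's proof reduces to elements of the special form $s_\mu f(u_a)s_\nu^*$ with $f$ a nonzero Laurent polynomial, computes the commutator with a generator $s_\omega u_{a^N}s_\omega^*$ of $\F$, and concludes from the resulting identity $f\big(z^{\mathfrak m^{d(\omega)}}\big)\big(z^{N\mathfrak m^{d(\nu)}}-z^{N\mathfrak m^{d(\mu)}}\big)=0$ in $\ca(u_a)\cong\rC(\bT)$ that $\mathfrak m^{d(\mu)}=\mathfrak m^{d(\nu)}$, using that nonzero trigonometric polynomials are not zero divisors; the passage from these special (polynomial) elements to an arbitrary element of $\ran\Phi_{\mathbf p}\cap\F'$ is left implicit and delegated to ``similar to \cite{Yang1}''. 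You instead parametrize the whole spectral subspace exactly, writing $A=S_{\mathbf n_+}BS_{\mathbf n_-}^{-1}$ with $B\in\F$ arbitrary, so no approximation is needed, and you land on the clean dichotomy that $\F'\cap\ran\Phi_{\mathbf n}$ is all of $\ran\Phi_{\mathbf n}$ or $\{0\}$ according to whether $\mathfrak m^{\mathbf n}=1$. The price is paid in the endgame: where the paper works inside $\ca(u_a)\cong\rC(\bT)$ with a polynomial non-vanishing argument, you must identify the Gelfand spectrum of $\F$ as the $M$-adic solenoid and kill $B$ by a support-of-Haar-measure argument (the open set $\{b\ne 0\}$ sits inside the null set $\zeta_N^{-1}(\bT[k])$, hence is empty). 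Both routes ultimately rest on $\ca(\bZ)$ embedding via pseudo-freeness (Proposition \ref{P:genO}(iii)), and your description of $\widehat{\F}$ is essentially the content of the paper's closing subsection on the spectrum of $\F$, so the extra machinery you invoke is already present elsewhere in the paper; what your version buys is an airtight treatment of the reduction to homogeneous elements, which is the one point the paper's own proof glosses over.
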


\begin{proof} 
Similar to \cite{Yang1},\footnote{The details will be included in the first author's PhD thesis.} it suffices to show $x\in \ran \Phi_{\mathbf p}\cap\F'$ has the given form. 
By the Cuntz-Krieger relation, one could assume that $x=s_\mu A s_\nu^*$ with $d(\mu)-d(\nu)=\mathbf p$ and $A\in \ca(u_a)$. One could further assume that 
$A=f(u_a)$, where $f(z)=\sum_{i=1}^{n} \lambda_{i} z^{M_i}\ne 0$ for some $0\ne \lambda_i\in \bC$ and $M_i\in \bZ$. 

Also, it is clear that $s_\mu s_\nu= s_\nu s_\mu$ and $\ s_\mu s_\nu^*= s_\nu^* s_\mu$ for all $\mu, \nu\in \Lambda_{\mathds 1_{\mathsf k}}$. 
Then, for all $N\in \bZ$ and $\omega\in \Lambda_{\mathds 1_{\mathsf  k}}$, one has 
\begin{align*}
&\ s_\mu A s_{\nu}^*s_{\omega}u_{a^N} s_{\omega}^* - s_{\omega} u_{a^N} s_{\omega}^*s_\mu A  s_{\nu}^*\\ 
=& \ s_\mu \left( \sum_{i=1}^{n} \lambda_i u_{a^{M_i}} \right)  s_\omega s_{\nu}^* u_{a^N} s_{\omega}^* 
 - s_{\omega} u_{a^N} s_\mu s_{\omega}^* \left( \sum_{i=1}^{n} \lambda_i u_{a^{M_{i}}} \right)  s_{\nu}^* \\
= &\ s_\mu s_\omega \left( \sum_{i=1}^{n} \lambda_{i} u_{a^{M_{i}\mathfrak m^{d(\omega)}}} \right) u_{a^{N\mathfrak m^{d(\nu)}}}  s_{\nu}^* s_{\omega}^* 
 - s_{\omega} s_\mu  u_{a^{N\Bm^{d(\mu)}}} \left( \sum_{i=1}^{n} \lambda_{i} u_{a^{M_{i} \mathfrak m^{d(\omega)}}} \right) s_{\omega}^*  s_{\nu}^* \\
= &\ s_{\mu}s_{\omega} \left( \sum_{i=1}^{n} \lambda_{i} u_{a^{M_{i}\mathfrak m^{d(\omega)}}} \right) \left(u_{a^{N\mathfrak m^{d(\nu)}}}   
 -  u_{a^{N{\mathfrak m}^{d(\mu)}}}\right) s_{\omega}^*  s_{\nu}^*\\
 = &\ s_{\mu}s_{\omega} f\big(u_{a^{\mathfrak m^{d(\omega)}}} \big)\left(u_{a^{N\mathfrak m^{d(\nu)}}}   
 -  u_{a^{N{\mathfrak m}^{d(\mu)}}}\right) s_{\omega}^*  s_{\nu}^*.
\end{align*}
After identifying $\ca(a)$ with $\rC(\bT)$ (see Proposition \ref{P:genO}), the above is equal to $0$ iff 
\[
 f\big(z^{\mathfrak m^{d(\omega)}} \big)\left(z^{N\mathfrak m^{d(\nu)}}   
 -  z^{N{\mathfrak m}^{d(\mu)}}\right)=0,
\]
and therefore, if and only if  $\mathfrak m^{d(\mu)}=\mathfrak m^{d(\nu)}$. 
\end{proof}

\begin{lem}
\label{L:F'MASA}
$\F'$ is a MASA of $\O_{\Lambda(\mathds 1_{\mathsf k}, \mathfrak m)}$. 
\end{lem}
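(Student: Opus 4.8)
The plan is to reduce the maximality of $\F'$ to the single assertion that $\F'$ is itself abelian, and then to verify the latter by a direct computation on the spanning set supplied by Lemma~\ref{L:F'}. The reduction rests on an elementary and general fact about relative commutants: writing $X^c$ for the relative commutant of a subset $X$ in $\O_{\Lambda(\mathds 1_{\mathsf k}, \mathfrak m)}$, a self-adjoint abelian subalgebra is a MASA precisely when it equals its own relative commutant. Since $\F'$ is by definition the relative commutant $\F^c$ of $\F$, it is automatically a self-adjoint (C*-)subalgebra, so the only things left to check are abelianness and this self-commutation identity.

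First I would record that $\F\subseteq \F'$: by Lemma~\ref{L:FAbe} the algebra $\F$ is abelian, so every element of $\F$ commutes with all of $\F$, i.e. $\F\subseteq \F^c=\F'$. Granting for the moment that $\F'$ is abelian, maximality then follows formally. Indeed, the inclusion $\F\subseteq \F'$ gives, on taking relative commutants (which reverse inclusions), $(\F')^c\subseteq \F^c=\F'$; on the other hand $\F'$ abelian means exactly $\F'\subseteq (\F')^c$. Combining these yields $(\F')^c=\F'$, which is the statement that $\F'$ is maximal abelian, hence a MASA.

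Thus the whole content is the abelianness of $\F'$, which I would establish on the dense spanning set from Lemma~\ref{L:F'} and extend by continuity. Take two generators $s_\mu u_{a^n} s_\nu^*$ and $s_\alpha u_{a^l} s_\beta^*$ with $\mathfrak m^{d(\mu)}=\mathfrak m^{d(\nu)}$ and $\mathfrak m^{d(\alpha)}=\mathfrak m^{d(\beta)}$. By Observation~\ref{Obs} every $s_\mu$ is a unitary, the $s_\mu$'s commute (hence so do the $s_\mu^*$'s and mixed products), and $a^n|_\mu=a^{n\mathfrak m^{d(\mu)}}$; the resulting commutation rule is $u_{a^n} s_\mu = s_\mu u_{a^{n\mathfrak m^{d(\mu)}}}$ together with its adjoint form $s_\nu^* u_{a^n}=u_{a^{n\mathfrak m^{d(\nu)}}}s_\nu^*$. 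Both rules push a power of $u_a$ past an $s$ or an $s^*$ by \emph{multiplying} the exponent by the relevant $\mathfrak m^{d(\cdot)}$, so they never leave the integer powers of $u_a$. Pushing all $s$'s to the left and all $s^*$'s to the right in each of the two products, I expect to obtain
\begin{align*}
(s_\mu u_{a^n} s_\nu^*)(s_\alpha u_{a^l} s_\beta^*)
&= s_{\mu}s_{\alpha}\, u_{a^{n\mathfrak m^{d(\alpha)}+l\mathfrak m^{d(\nu)}}}\, s_\nu^* s_\beta^*,\\
(s_\alpha u_{a^l} s_\beta^*)(s_\mu u_{a^n} s_\nu^*)
&= s_{\mu}s_{\alpha}\, u_{a^{l\mathfrak m^{d(\mu)}+n\mathfrak m^{d(\beta)}}}\, s_\nu^* s_\beta^*.
\end{align*}
The spatial parts agree because the $s$'s (and the $s^*$'s) commute, and the two $u_a$-exponents agree precisely because the defining constraints give $\mathfrak m^{d(\alpha)}=\mathfrak m^{d(\beta)}$ and $\mathfrak m^{d(\nu)}=\mathfrak m^{d(\mu)}$. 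Hence the generators commute pairwise and $\F'$ is abelian.

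The only delicate point is bookkeeping: one must push each $u_a$-power in the direction that multiplies (rather than divides) the exponent by $\mathfrak m^{d(\cdot)}$, so as to stay inside $\ca(u_a)$, and keep track of which degree constraint is invoked at which step. I would emphasise that abelianness is genuinely the crux here: for a non-abelian relative commutant the formal maximality argument above would fail, so it is Lemma~\ref{L:F'}, which pins down exactly the degree constraint $\mathfrak m^{d(\mu)}=\mathfrak m^{d(\nu)}$, that is doing the real work.
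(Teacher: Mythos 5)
Your proof is correct, and the maximality half takes a genuinely different route from the paper's. Both arguments establish abelianness of $\F'$ by the same computation on the spanning set from Lemma~\ref{L:F'}, pushing powers of $u_a$ past the unitaries $s_\mu$ via $u_{a^n}s_\mu = s_\mu u_{a^{n\mathfrak m^{d(\mu)}}}$ and matching exponents using the constraints $\mathfrak m^{d(\mu)}=\mathfrak m^{d(\nu)}$, $\mathfrak m^{d(\alpha)}=\mathfrak m^{d(\beta)}$; your bookkeeping here agrees with the paper's. For maximality, however, the paper performs a second explicit computation: it takes an element of $\ran\Phi_{\mathbf p}\cap\F''$, writes it as $s_\mu A s_\nu^*$ with $A\in\ca(u_a)$ (which requires a Fourier-decomposition argument in the spirit of \cite{Yang1}), and shows that commuting with all of $\F'$ forces $\mathfrak m^{d(\mu)}=\mathfrak m^{d(\nu)}$, whence $\F''\subseteq\F'$. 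You instead observe that $\F\subseteq\F'$ (by Lemma~\ref{L:FAbe}), so antitonicity of relative commutants gives $(\F')^c\subseteq\F^c=\F'$, while abelianness gives the reverse inclusion; hence $(\F')^c=\F'$ and $\F'$ is maximal abelian. This formal argument is airtight and bypasses entirely the decomposition of a general element of $\F''$ into the form $s_\mu A s_\nu^*$, which is the most delicate (and least documented) step of the paper's proof. What the paper's longer computation buys in exchange is the explicit characterization of which generators $s_\mu A s_\nu^*$ lie in $\F''$, information it reuses in the proof of Theorem~\ref{T:F'Cargen}; but for the lemma as stated, your reduction is cleaner.
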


\begin{proof}
We first show that $\F'$ is abelian. For this, let $A:=s_\mu u_{a^M} s_{\nu}^*$ and $B:=s_\alpha u_{a^N} s_\beta ^*$ be two standard generators in $\F'$. Then 
\begin{align*}
AB=s_\mu u_{a^M} s_{\nu}^*s_\alpha u_{a^N} s_\beta^* 
&=s_\mu u_{a^M} s_\alpha s_{\nu}^* u_{a^N} s_\beta^* 
=s_\mu s_\alpha u_{a^{M\Bn^{d(\alpha)}}}u_{a^{N\Bm^{d(\nu)}}} s_{\nu}^*s_\beta^*, \\ 
BA=s_\alpha u_{a^N} s_\beta^*  s_\mu u_{a^M} s_{\nu}^*
&=s_\alpha  u_{a^N} s_\mu s_{\beta}^* u_{a^M} s_\nu^* 
=s_\mu s_\alpha u_{a^{N\Bn^{d(\mu)}}}u_{a^{M\Bm^{d(\beta)}}} s_{\nu}^*s_\beta^*. 
\end{align*}
But $\mathfrak m^{d(\mu)}=\mathfrak m^{d(\nu)}$ and $\mathfrak m^{d(\alpha)}=\mathfrak m^{d(\beta)}$ as $A, B\in \F'$. Thus $AB=BA$ and so $\F'$ is abelian. 

Now we show $\F'$ is a MASA. 
Let $s_\alpha u_{a^N} s_{\beta}^*\in \F'$ and $s_\mu A s_{\nu}^*\in \ran\Phi_{\mathbf p}\cap \F''$ with $A\in \ca(u_a)$. Similar to the proof of Lemma \ref{L:F'}, we have for all 
$\mu,\nu\in\Lambda_{\mathds 1_{\mathsf k}}$ and $N\in \bZ$
\begin{align*}
&\ s_\mu A s_{\nu}^*s_{\alpha} u_{a^N} s_{\beta}^* -s_{\alpha} u_{a^N} s_{\beta}^*s_\mu A  s_{\nu}^*\\
=& \ s_\mu \left( \sum_{i=1}^{n} \lambda_{i} u_{a^{M_{i}}} \right)  s_\alpha s_{\nu}^* u_{a^N} s_{\beta}^* 
 - s_{\alpha} u_{a^N} s_\mu s_{\beta}^* \left( \sum_{i=1}^{n} \lambda_{i} u_{a^{M_{i}}} \right)  s_{\nu}^* \\
=&\ s_\mu s_\alpha \left( \sum_{i=1}^{n} \lambda_{i} u_{a^{M_{i}\mathfrak m^{d(\alpha)}}} \right) u_{a^{N\mathfrak m^{d(\nu)}}}  s_{\nu}^* s_{\beta}^* 
 - s_{\alpha} s_\mu  u_{a^{N\mathfrak m^{d(\mu)}}} \left( \sum_{i=1}^{n} \lambda_{i} u_{a^{M_{i} \mathfrak m^{d(\beta)}}} \right) s_{\beta}^*  s_{\nu}^* \\
=&s_{\alpha} s_\mu \left[\left( \sum_{i=1}^{n} \lambda_{i} u_{a^{M_{i}\mathfrak m^{d(\alpha)}}} \right) u_{a^{N\Bm^{d(\nu)}}}  
 -  u_{a^{N\mathfrak m^{d(\mu)}}} \left( \sum_{i=1}^{n} \lambda_{i} u_{a^{M_{i} \mathfrak m^{d(\beta)}}} \right)\right] s_{\beta}^*  s_{\nu}^*. 
\end{align*}
Identify $\ca(u_a)$ with $\rC(\bT)$ and notice $ m^{d(\alpha)}= m^{d(\beta)}$. Then the above is equal to $0$, iff 
\[
f\big(z^{\mathfrak m^{d(\alpha)}}\big)\left(z^{N\mathfrak m^{d(\nu)}} - z^{N\mathfrak m^{d(\mu)}}\right)=0 \iff \mathfrak m^{d(\mu)}=\mathfrak m^{d(\nu)}. 
\]
Hence $s_\mu A s_{\nu}^*\in \F'$ and therefore $\F'=\F''$. 
\end{proof}

Now suppose that $\{|m_i|: 1\le i\le k\}$ is multiplicatively independent. Then $\F'=\F$, which is also the diagonal subalgebra of $\O_{\Lambda(\mathds 1_{\mathsf k},\mathfrak m)}$. So there is a conditional expectation $\Phi$ from $\O_{\Lambda(\mathds 1_{\mathsf k},\mathfrak m)}$ onto $\F$. 
Therefore $\F$ is a Cartan subalgebra of $\O_{\Lambda(\mathds 1_{\mathsf k},\mathfrak m)}$.  

\begin{cor}
\label{C:FCartan}
Suppose that $\{ |m_i|: 1\le i\le \mathsf k\}$ is multiplicatively independent. Then $\F$ is a Cartan subalgebra of $\O_{\Lambda(\mathds 1,\mathfrak m)}$.  
\end{cor}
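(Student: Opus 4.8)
The plan is to verify the four defining properties of a Cartan subalgebra from the definition preceding this subsection, drawing on the structural lemmas already established. Two of the conditions are immediate: since $\O_{\Lambda(\mathds 1_{\mathsf k},\mathfrak m)}$ is unital and $I\in\F$, the algebra $\F$ contains an approximate unit, settling (i); and the faithful conditional expectation $\Phi_{\mathbf 0}$ from $\O_{\bZ,\Lambda}$ onto its gauge-fixed subalgebra $\O_{\bZ,\Lambda}^\gamma=\F$, recalled in Subsection \ref{S:sssC*}, settles (iv).

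The conceptual core is the MASA property (ii), and here the multiplicative independence hypothesis enters. First I would show that under this hypothesis the index condition in Lemma \ref{L:F'} collapses: if $\mathfrak m^{d(\mu)}=\mathfrak m^{d(\nu)}$, then taking absolute values gives $\prod_i |m_i|^{p_i-q_i}=1$ with $\mathbf p=d(\mu)$ and $\mathbf q=d(\nu)\in\bN^{\mathsf k}$, so multiplicative independence of $\{|m_i|\}$ forces $d(\mu)=d(\nu)$, whence $\mu=\nu$ because every degree class $\Lambda_{\mathds 1_{\mathsf k}}^{\mathbf p}$ is a singleton (Observation \ref{Obs}). Comparing with Lemma \ref{L:F}, this yields $\F'=\F$. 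Since $\F$ is abelian by Lemma \ref{L:FAbe} and $\F'=\F''$ by Lemma \ref{L:F'MASA}, I conclude $\F=\F'=\F''$, i.e.\ $\F$ is maximal abelian.

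It remains to establish regularity (iii), which I expect to be the only part requiring explicit computation. Because $\O_{\Lambda(\mathds 1_{\mathsf k},\mathfrak m)}$ is generated by the unitaries $s_{\Bx_i}$ ($1\le i\le\mathsf k$) and $u_a$, it suffices to show these lie in $N(\F)$; and since conjugation by a unitary is a norm-continuous $*$-map, I only need to test it on the spanning elements $s_\mu u_{a^\ell} s_\mu^*$ of $\F$ from Lemma \ref{L:F}. For $u_a$: it already belongs to $\F$ and $\F$ is abelian, so $u_a$ trivially normalizes $\F$. For $s_{\Bx_i}$, using $s_{\Bx_i} s_\mu=s_{\Bx_i\mu}$ I get $s_{\Bx_i}(s_\mu u_{a^\ell} s_\mu^*) s_{\Bx_i}^*=s_{\Bx_i\mu} u_{a^\ell} s_{\Bx_i\mu}^*\in\F$; while for the opposite conjugation I would invoke $a\cdot\Bx_i=\Bx_i$ and $a|_{\Bx_i}=a^{m_i}$ (Observation \ref{Obs}) to obtain the commutation $u_{a^\ell} s_{\Bx_i}=s_{\Bx_i} u_{a^{\ell m_i}}$, and combine it with the fact that the $s_\mu$ commute among themselves and with their adjoints to compute $s_{\Bx_i}^*(s_\mu u_{a^\ell} s_\mu^*) s_{\Bx_i}=s_\mu u_{a^{\ell m_i}} s_\mu^*\in\F$. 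Hence each generator normalizes $\F$ and $N(\F)$ generates the algebra.

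The main obstacle, such as it is, is the regularity step: it is the only property not already packaged in the preceding lemmas, and it forces one to keep careful track of the self-similar relations, in particular that conjugating $u_{a^\ell}$ by $s_{\Bx_i}$ rescales the exponent by $m_i$. Once these verifications are assembled, all four Cartan axioms hold and the corollary follows.
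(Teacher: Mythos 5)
Your proposal is correct and follows essentially the same route as the paper: multiplicative independence of $\{|m_i|\}$ collapses the condition $\mathfrak m^{d(\mu)}=\mathfrak m^{d(\nu)}$ in Lemma \ref{L:F'} to $\mu=\nu$, giving $\F'=\F$, and then Lemmas \ref{L:FAbe} and \ref{L:F'MASA} together with the gauge expectation $\Phi_{\mathbf 0}$ yield the Cartan axioms. The only difference is that you spell out the regularity check on the generators $s_{\Bx_i}$ and $u_a$, which the paper leaves implicit; your computations there are correct.
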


Suppose that $m_i>0$ $(1\le i\le \mathsf k$). For convenience, we use the convention: If $a|_\mu=a^n$, then $t^{\ln a|_\mu}:=t^{\ln n}$. Thus
\begin{align*}
\ln a|_{\mu\nu}
=\ln \mathfrak m^{d(\mu\nu)}
=\ln \mathfrak m^{d(\mu)}+\ln\mathfrak m^{d(\nu)}=\ln a|_\mu+\ln a|_\nu.
\end{align*}
Therefore we obtain action $\alpha$ of $\bT$ on $\O_{\Lambda(\mathds 1_{\mathsf k},\mathfrak m)}$ as follows:
\[
\alpha_t(s_\mu)=t^{\ln a|_\mu} s_\mu, \ \alpha_t(u_a)=u_a \text{ for all }\mu \in \Lambda_{\mathds 1_{\mathsf k}}\text{ and }t\in \bT.
\]
Define 
\[
\Psi(x):=\int_{\bT} \alpha_t(x) d t\qforal x\in \O_{\Lambda(\mathds 1,\mathfrak m)}.
\]

\begin{lem}
Suppose that $m_i>0$ $(1\le i\le \mathsf k$). Then 
$\F'$ is the fixed point algebra of $\alpha$. Furthermore, $\Psi$ a faithful conditional expectation from $\O_{\Lambda(\mathds 1_{\mathsf k},\mathfrak m)}$ onto $\F'$. 
\end{lem}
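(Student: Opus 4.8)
The plan is to reduce everything to the behaviour of $\alpha$ on the standard generators and then combine Lemma \ref{L:F'} with the standard averaging machinery for actions of a compact group. First I would record, using Observation \ref{Obs} (so that $a|_\mu=a^{\mathfrak m^{d(\mu)}}$ and hence $\ln a|_\mu=\ln\mathfrak m^{d(\mu)}$), how $\alpha_t$ acts on a generator. Since $\alpha_t$ is a $*$-automorphism fixing $u_a$, and $\alpha_t(s_\nu)^*=\overline{t^{\ln\mathfrak m^{d(\nu)}}}\,s_\nu^*=t^{-\ln\mathfrak m^{d(\nu)}}s_\nu^*$ for $t\in\bT$, one gets
\[
\alpha_t(s_\mu u_{a^n} s_\nu^*)=t^{\ln\mathfrak m^{d(\mu)}-\ln\mathfrak m^{d(\nu)}}\,s_\mu u_{a^n} s_\nu^* \qfor \mu,\nu\in\Lambda_{\mathds 1_{\mathsf k}},\ n\in\bZ.
\]
Thus every generator is an eigenvector of each $\alpha_t$, with eigenvalue $1$ exactly when the real exponent $\ln\mathfrak m^{d(\mu)}-\ln\mathfrak m^{d(\nu)}$ vanishes, i.e. when $\mathfrak m^{d(\mu)}=\mathfrak m^{d(\nu)}$ (here $m_i>0$ is used, so that $\mathfrak m^{d(\mu)}$ is a genuine positive integer and the exponent is zero iff $\mathfrak m^{d(\mu)}=\mathfrak m^{d(\nu)}$).

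Next I would invoke the standard fact that, for a strongly continuous action of the compact group $\bT$ on a unital C*-algebra, the averaging map $\Psi(x)=\int_\bT\alpha_t(x)\,dt$ against normalized Haar measure is a faithful conditional expectation onto the fixed point algebra $\O^\alpha:=\{x:\alpha_t(x)=x\text{ for all }t\in\bT\}$. Faithfulness is the usual positivity argument: if $\Psi(x^*x)=0$ then the continuous nonnegative integrand $t\mapsto\alpha_t(x^*x)$ vanishes identically, whence $x^*x=0$ upon evaluating at $t=1$. In particular $\ran\Psi=\O^\alpha$.

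To identify this range I would apply $\Psi$ to the generators. By the eigenvector formula and the orthogonality of characters,
\[
\Psi(s_\mu u_{a^n} s_\nu^*)=\Big(\int_\bT t^{\ln\mathfrak m^{d(\mu)}-\ln\mathfrak m^{d(\nu)}}\,dt\Big)s_\mu u_{a^n} s_\nu^*=
\begin{cases}
s_\mu u_{a^n} s_\nu^* & \text{if }\mathfrak m^{d(\mu)}=\mathfrak m^{d(\nu)},\\
0 & \text{otherwise.}
\end{cases}
\]
Hence $\ran\Psi=\ol\spn\{s_\mu u_{a^n} s_\nu^*:\mathfrak m^{d(\mu)}=\mathfrak m^{d(\nu)},\ n\in\bZ\}$, which is precisely $\F'$ by Lemma \ref{L:F'}. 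Combining this with $\ran\Psi=\O^\alpha$ gives simultaneously $\O^\alpha=\F'$ and the fact that $\Psi$ is a faithful conditional expectation from $\O_{\Lambda(\mathds 1_{\mathsf k},\mathfrak m)}$ onto $\F'$ (the inclusion $\F'\subseteq\O^\alpha$ is also transparent from the first display, since each fixed generator spans $\F'$).

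The routine parts here are the generator computation and the appeal to the averaging machinery; the single point deserving care is the orthogonality relation for the characters $t\mapsto t^{\ln\mathfrak m^{d(\mu)}-\ln\mathfrak m^{d(\nu)}}$, namely that the integral equals $1$ exactly when the real exponent is zero and $0$ otherwise. This is the only step in which the hypothesis $m_i>0$ and the precise interpretation of the $\bT$-action are genuinely used, and it is what makes the range of $\Psi$ collapse onto $\F'$ rather than some larger algebra.
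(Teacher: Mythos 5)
Your overall strategy (compute $\alpha$ on the standard generators, average, and match the range against Lemma \ref{L:F'}) is the right one, and it is essentially what the paper leaves implicit when it says the non-faithfulness parts are ``similar to the corresponding parts for $\Phi_{\mathbf 0}$''. But the step you yourself single out as the crucial one is where the argument breaks. The exponents $r=\ln\mathfrak m^{d(\mu)}-\ln\mathfrak m^{d(\nu)}$ are real numbers, generically irrational, so $t\mapsto t^{r}$ is not a character of $\bT$ (it is not even single-valued on the circle), and the orthogonality relation you invoke fails: parametrizing $t=e^{2\pi i\theta}$ one gets $\int_{\bT}t^{r}\,dt=\frac{e^{2\pi i r}-1}{2\pi i r}\neq 0$ for $r\notin\bZ$. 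Consequently $\Psi(s_\mu u_{a^n}s_\nu^*)$ does not vanish when $\mathfrak m^{d(\mu)}\neq\mathfrak m^{d(\nu)}$, and the identification $\ran\Psi=\F'$ collapses. (The imprecision originates in the paper's definition of $\alpha$ as a $\bT$-action with real exponents; $\alpha$ is honestly a one-parameter group over $\bR$, or better the restriction of the gauge action $\gamma$ to the closed subgroup of $\bT^{\mathsf k}$ dual to $\bZ^{\mathsf k}/\{\mathbf p:\mathfrak m^{\mathbf p}=1\}$. With that compact group in place of $\bT$ the generators transform by genuine characters, your orthogonality computation is valid, and the rest of your argument goes through verbatim.)

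Two further remarks. First, if you instead repair the integral by averaging over $\bR$ with Ces\`aro means (using $\lim_{T\to\infty}\frac{1}{2T}\int_{-T}^{T}e^{irs}\,ds=\delta_{r,0}$), your faithfulness argument also needs repair: a nonnegative continuous function on the non-compact group $\bR$ can have zero mean without vanishing at the identity, so ``evaluate the integrand at $t=1$'' no longer applies. The paper sidesteps this by a different device: it verifies the factorization $\Phi_{\mathbf 0}=\Phi_{\mathbf 0}|_{\F'}\circ\Psi$ and deduces faithfulness of $\Psi$ from the known faithfulness of $\Phi_{\mathbf 0}$, which works however $\Psi$ is realized. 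Second, note that the inclusion of the fixed point algebra into $\F'$ also relies on having a genuine expectation onto the closed span of the fixed generators, so the gap affects both assertions of the lemma, not only the conditional-expectation half.
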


\begin{proof}
We only need to show the faithfulness of $\Psi$ here, as other parts can be proved similarly to the corresponding parts for $\Phi_{\mathbf 0}$. 
Let $\Phi_{\mathbf 0}|_{\F'}$ be the restriction of $\Phi_{\mathbf 0}$ onto $\F'$, 
and $\Psi$ be the expectation induced from $\alpha$ above. Then one can check that $\Phi_{\mathbf 0}=\Phi|_{\F'}\circ \Psi$. The faithfulness of 
$\Psi$ follows from that of $\Phi_{\mathbf 0}$. 
\end{proof}

\begin{thm}
\label{T:F'Cartan}
Suppose that $m_i>0$ $(1\le i\le \mathsf k$). 
Then $\F'$ is a Cartan subalgebra of $\O_{\Lambda(\mathds 1_{\mathsf k},\mathfrak m)}$. 
\end{thm}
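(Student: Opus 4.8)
The plan is to verify, for the pair $\F' \subseteq \O_{\Lambda(\mathds 1_{\mathsf k}, \mathfrak m)}$, the four defining properties of a Cartan subalgebra from the definition opening this subsection. Three of the four are already essentially in hand, so the genuinely new work lies in establishing regularity. First I would dispose of the two easy conditions. Condition (i) is immediate: $\O_{\Lambda(\mathds 1_{\mathsf k}, \mathfrak m)}$ is unital, and taking $\mu=\nu=v$, $n=0$ in the description of Lemma \ref{L:F'} shows $I\in\F'$, so $\F'$ trivially contains an approximate unit. Condition (ii), that $\F'$ is a MASA, is exactly Lemma \ref{L:F'MASA}.

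Condition (iv), the existence of a faithful conditional expectation onto $\F'$, is supplied by the lemma immediately preceding the theorem: there the hypothesis $m_i>0$ is used to define the circle action $\alpha$ through $t^{\ln a|_\mu}$ and to produce the faithful expectation $\Psi$ with range $\F'$. This is precisely where the positivity assumption is needed, since without it one has only the expectation $\Phi_{\mathbf 0}$ onto $\F$, not onto the larger algebra $\F'$. Thus the role of the hypothesis $m_i>0$ in the statement is entirely to secure condition (iv).

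It remains to prove (iii): that the normalizer set $N(\F')$ generates $\O_{\Lambda(\mathds 1_{\mathsf k}, \mathfrak m)}$. Since this C*-algebra is generated by the unitaries $s_{\Bx_1},\ldots,s_{\Bx_{\mathsf k}}$ (each unitary by Observation \ref{Obs}) together with the unitary $u_a$, it suffices to show that each generator normalizes $\F'$. For $u_a$ this is automatic: taking $\mu=\nu=v$ in Lemma \ref{L:F'} gives $u_a\in\F'$, and as $\F'$ is abelian, $u_a\F'u_a^*=\F'=u_a^*\F'u_a$. For $s_{\Bx_i}$ I would compute on the standard spanning elements: if $s_\mu u_{a^n}s_\nu^*\in\F'$ with $\mathfrak m^{d(\mu)}=\mathfrak m^{d(\nu)}$, then $s_{\Bx_i}(s_\mu u_{a^n}s_\nu^*)s_{\Bx_i}^*=s_{\Bx_i\mu}u_{a^n}s_{\Bx_i\nu}^*$, and since $\mathfrak m^{d(\Bx_i\mu)}=m_i\,\mathfrak m^{d(\mu)}=m_i\,\mathfrak m^{d(\nu)}=\mathfrak m^{d(\Bx_i\nu)}$, this again lies in $\F'$ by Lemma \ref{L:F'}. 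Hence $s_{\Bx_i}\F's_{\Bx_i}^*\subseteq\F'$.

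The main obstacle, and the one subtle point, is that membership in $N(\F')$ requires both $x\F'x^*\subseteq\F'$ and $x^*\F'x\subseteq\F'$, whereas the direct computation above yields only the first inclusion cleanly for $s_{\Bx_i}$ (a second direct computation would be case-laden, splitting on whether $\mu$ and $\nu$ contain the letter $\Bx_i$). I would sidestep this by exploiting condition (ii): since $s_{\Bx_i}$ is unitary, $s_{\Bx_i}\F's_{\Bx_i}^*$ is again a maximal abelian subalgebra of $\O_{\Lambda(\mathds 1_{\mathsf k}, \mathfrak m)}$; being a maximal abelian subalgebra contained in the abelian algebra $\F'$, it must coincide with $\F'$. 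Therefore $s_{\Bx_i}\F's_{\Bx_i}^*=\F'$, which immediately yields the reverse inclusion $s_{\Bx_i}^*\F's_{\Bx_i}=\F'$ as well. Consequently every generator lies in $N(\F')$, so $N(\F')$ generates $\O_{\Lambda(\mathds 1_{\mathsf k}, \mathfrak m)}$, completing regularity and hence the proof.
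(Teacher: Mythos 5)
Your proof is correct and follows essentially the same route as the paper: conditions (i), (ii) and (iv) are quoted from Lemma \ref{L:F'MASA} and the lemma on $\Psi$ (which is indeed the only place the hypothesis $m_i>0$ enters), and regularity is established by checking that the generators normalize $\F'$. The only difference is in the regularity step: the paper computes $B^*AB$ directly for an arbitrary standard generator $B=s_\alpha u_{a^N}s_\beta^*$, whereas you conjugate only by the single-letter unitaries $s_{\Bx_i}$ and then invoke maximality of the abelian subalgebra $s_{\Bx_i}\F's_{\Bx_i}^*\subseteq\F'$ to obtain the two-sided normalizer condition --- a clean way of settling a point the paper's one-sided computation leaves implicit.
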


\begin{proof}
It remains to show that $\F'$ is regular, 
For this, let $A:=s_\mu u_a^M s_{\nu}^*\in \F'$ and $B:=s_\alpha u_a^N s_\beta ^*\in \O_{\Lambda(\mathds 1_{\mathsf k},\mathfrak m)}$. Then 
\begin{align*}
B^*AB
&=s_\beta u_{a^{-N}} s_\alpha^* s_\mu u_{a^M} s_{\nu}^*s_\alpha u_{a^N} s_\beta^* \\
&=s_\beta u_{a^{-N}} s_\mu s_\alpha^* u_{a^M} s_\alpha s_{\nu}^* u_{a^N} s_\beta^* \\
&=s_\beta s_\mu u_{a^{-N\mathfrak m^{d(\mu)}}}  u_{a^{M\mathfrak m^{d(\alpha)}}} s_\alpha^* s_\alpha s_{\nu}^* u_{a^N} s_\beta^* \\
&=s_\beta s_\mu u_{a^{-N\mathfrak m^{d(\mu)}}}  u_{a^{M\mathfrak m^{d(\alpha)}}} u_{a^{N\mathfrak m^{d(\nu)}}}  s_{\nu}^*  s_\beta^* \\
&=s_\beta s_\mu u_{a^{M\mathfrak m^{d(\alpha)}}} s_\nu^* s_\beta^* \text{ (as $\mathfrak m^{d(\mu)}=\mathfrak m^{d(\nu)}$)}\\
&\in \F'.
\end{align*}
Therefore $\F'$ is regular. 
\end{proof}

Let $G$ be a discrete (countable) group.  A subgroup $S\subseteq G$ is called \textit{immediately centralizing} if, for every $g\in G$, we either have 
$\{xgx^{-1}: x\in S\}=\{g\}$ or $\{xgx^{-1}: x\in S\}$ is infinite.\footnote{This definition is slightly different from the one used in \cite{DGNRW20}, but mentioned in  \cite{Duw24} and \cite{DGN21}.
Thanks to Anna Duwenig and Rachael Norton for some discussion.}

\begin{thm}
\label{T:F'Cargen}
$\F'$ is a Cartan subalgebra of $\O_{\Lambda(\mathds 1_{\mathsf k},\mathfrak m)}$.
\end{thm}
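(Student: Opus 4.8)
The plan is to verify the four defining properties of a Cartan subalgebra for $\F'$ inside $\O_{\Lambda(\mathds 1_{\mathsf k},\mathfrak m)}$, reusing everything that is insensitive to the signs of the $m_i$ and supplying a genuinely new argument only for the conditional expectation. Conditions (i) and (ii) are already in hand: $\F'$ contains the unit (take $\mu=\nu=v$ and $n=0$, so $s_v u_{a^0} s_v^*=1\in\F'$ since $\mathfrak m^{d(v)}=\mathfrak m^{d(v)}$), and Lemma~\ref{L:F'MASA} shows $\F'$ is a MASA. For regularity (iii) I would point out that the computation in the proof of Theorem~\ref{T:F'Cartan} is purely algebraic: for $A=s_\mu u_{a^M}s_\nu^*\in\F'$ and a standard generator $B=s_\alpha u_{a^N}s_\beta^*$ one gets $B^*AB=s_\beta s_\mu u_{a^{M\mathfrak m^{d(\alpha)}}}s_\nu^* s_\beta^*\in\F'$, using only the relation $u_{a^j}s_\sigma=s_\sigma u_{a^{j\mathfrak m^{d(\sigma)}}}$, the commutativity of the $s_\sigma$, and the identity $\mathfrak m^{d(\mu)}=\mathfrak m^{d(\nu)}$ (which forces $\mathfrak m^{d(\beta\mu)}=\mathfrak m^{d(\beta\nu)}$); none of this needs $m_i>0$, and the adjoint analogue gives $B\F'B^*\subseteq\F'$. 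Since each $s_\alpha$ is a unitary (Observation~\ref{Obs}(1)), every such $B$ is a unitary normalizer of $\F'$, and by Proposition~\ref{P:genO}(1) these $B$ densely span the algebra, so $N(\F')$ generates $\O_{\Lambda(\mathds 1_{\mathsf k},\mathfrak m)}$.

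The heart of the matter is (iv), where the one-parameter action $\alpha$ of Theorem~\ref{T:F'Cartan} is unavailable because $\ln m_i$ is undefined when $m_i<0$. I would instead extract the expectation from a sub-torus of the gauge action $\gamma$ of $\bT^{\mathsf k}$. Set $P:=\{\mathbf w\in\bZ^{\mathsf k}:\mathfrak m^{\mathbf w}=1\}$, a subgroup of $\bZ^{\mathsf k}$, and let $H:=P^\perp=\{z\in\bT^{\mathsf k}:z^{\mathbf w}=1\text{ for all }\mathbf w\in P\}$, a closed (hence compact) subgroup of $\bT^{\mathsf k}$. Since $\gamma_z(s_\mu u_{a^n}s_\nu^*)=z^{d(\mu)-d(\nu)}s_\mu u_{a^n}s_\nu^*$ and, for $\mathbf w\in\bZ^{\mathsf k}$, the character $z\mapsto z^{\mathbf w}$ is trivial on $H$ precisely when $\mathbf w\in(P^\perp)^\perp=P$ (Pontryagin duality for $\bZ^{\mathsf k}$), integration over $H$ yields
\[
E(x):=\int_H \gamma_z(x)\,dz,\qquad E(s_\mu u_{a^n}s_\nu^*)=\begin{cases}s_\mu u_{a^n}s_\nu^* & \text{if }d(\mu)-d(\nu)\in P,\\ 0 & \text{otherwise.}\end{cases}
\]
Because $d(\mu)-d(\nu)\in P\iff\mathfrak m^{d(\mu)}=\mathfrak m^{d(\nu)}$, Lemma~\ref{L:F'} identifies $\ran E$ with $\F'$; thus $\F'$ is exactly the fixed-point algebra of $\gamma|_H$, and $E$ is a conditional expectation onto it, being unital, contractive, idempotent and $\F'$-bimodular (as $\gamma_z$ fixes $\F'$ pointwise).

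Finally I would verify faithfulness of $E$: if $E(x^*x)=0$ then $\int_H\gamma_z(x^*x)\,dz=0$ with each $\gamma_z(x^*x)\ge 0$ and $z\mapsto\gamma_z(x^*x)$ norm-continuous, forcing $\gamma_z(x^*x)=0$ for all $z\in H$ and in particular $x^*x=\gamma_1(x^*x)=0$. Together with (i)--(iii) this proves $\F'$ is Cartan. The only genuinely new ingredient beyond Theorem~\ref{T:F'Cartan} is the replacement of the single $\bT$-action by the compact sub-torus $P^\perp$, and I expect the main (conceptual, not computational) obstacle to be recognizing $\F'$ as the fixed-point algebra of $\gamma|_{P^\perp}$: once the duality computation $(P^\perp)^\perp=P$ pins down the range of the averaging map, faithfulness and the Cartan conclusion follow automatically. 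Equivalently, under the identification $\O_{\Lambda(\mathds 1_{\mathsf k},\mathfrak m)}\cong\ca(\bZ[\tfrac1{\mathfrak m}]\rtimes\bZ^{\mathsf k})$ indicated in Subsection~\ref{SS:Fur}, $E$ is the canonical faithful expectation onto the sub-crossed-product $\F'\cong\ca(\bZ[\tfrac1{\mathfrak m}])\rtimes P$, which offers an alternative route should one prefer the group-algebra picture.
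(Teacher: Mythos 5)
Your proof is correct, but it takes a genuinely different route from the paper's. The paper identifies $\O_{\Lambda(\mathds 1_{\mathsf k},\mathfrak m)}$ with $\ca(G)\cong\ca_{\mathrm r}(G)$ for the group $G=\langle a,\Bx_i: a\Bx_i=\Bx_i a^{m_i}\rangle$ (using amenability), identifies $\F'$ with $\ca_{\mathrm r}(S)$ for the normal subgroup $S=\{\mu a^n\nu^{-1}:\mathfrak m^{d(\mu)}=\mathfrak m^{d(\nu)}\}$, checks that $S$ is immediately centralizing, and then invokes the Cartan criterion of Duwenig--Gillaspy--Norton--Reznikoff--Wright; this is short given the external machinery and fits the group-algebra picture sketched in your closing sentence, which is in fact essentially the paper's argument. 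You instead verify the four Cartan axioms directly: you correctly observe that the MASA statement (Lemma \ref{L:F'MASA}) and the normalizer computation from Theorem \ref{T:F'Cartan} are sign-insensitive, and you supply the one genuinely new ingredient --- the faithful conditional expectation --- by averaging the gauge action over the compact annihilator $P^\perp\subseteq\bT^{\mathsf k}$ of $P=\{\mathbf w:\mathfrak m^{\mathbf w}=1\}$, with $(P^\perp)^\perp=P$ (valid since every subgroup of $\bZ^{\mathsf k}$ is closed) pinning down the range as $\F'$ via Lemma \ref{L:F'}. Your construction is self-contained, avoids the appeal to \cite{DGNRW20} and to the full-equals-reduced identification, and cleanly explains Theorem \ref{T:F'Cartan} as the special case where the one-parameter subgroup $\{e^{it\mathsf r}\}$ can be used in place of $P^\perp$; what it does not give is the structural description of $\F'$ as a subgroup $\mathrm C^*$-algebra, which the paper's route provides for free and which connects to the Furstenberg discussion.
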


\begin{proof}
Let $G:=\langle a, \Bx_i: a\Bx_i=\Bx_i a^{m_i}, 1\le i\le \mathsf k\rangle$. Then it it easy to see that 
 $\O_{\Lambda(\mathds 1_{\mathsf k},\mathfrak m)}\cong \ca(G)$ via $u_a\mapsto a$ and $s_{\Bx_i} \mapsto \Bx_i$.
 By \cite{LY21}, $\O_{\Lambda(\mathds 1_{\mathsf k},\mathfrak m)}$ is amenable and so is $\ca(G)$. Thus  
 $\O_{\Lambda(\mathds 1_{\mathsf k},\mathfrak m)}\cong \ca(G)\cong \ca_{\text r}(G)$.
 
 Let $S:=\{\mu a^n \nu^{-1}: \mathfrak m^{d(\mu)}=\mathfrak m^{d(\nu)}, n\in \bZ\}$. Similar to the proof of Theorem \ref{T:F'Cartan}, one can easily show that $S$ is 
 a normal subgroup of $G$. Also, analogous to the proof of Lemma \ref{L:F'MASA},\footnote{Actually, it is easier here since $A=a^n$ for some $n\in \bZ$.}
 one can show that, for $\alpha a^k \beta^{-1}\in G$, if the set
 $\{(\mu a^n \nu^{-1})(\alpha a^k \beta^{-1})(\nu a^{-n} \mu^{-1}): \mu, \nu \in \Lambda(\mathds 1_{\mathsf k},\mathfrak m), n\in \bZ\}$ is not a singleton, 
 then it has to be infinite. Indeed, 
\begin{align*}
(\mu a^n \nu^{-1})(\alpha a^k \beta^{-1})(\nu a^{-n} \mu^{-1}) = (\mu' a^{n'} \nu'^{-1})(\alpha a^k \beta^{-1})(\nu' a^{-n'} \mu'^{-1})
\iff n \mathfrak{m}^{d(\mu')}=n' \mathfrak{m}^{d(\mu)}.
\end{align*}
Hence $S$ is immediately centralizing. 
By \cite[Theorem 3.1]{DGNRW20}, $\ca_{\text r}(S)$ is Cartan in $\ca_{\text r}(G)$. Therefore $\F'$ is Cartan in 
 $\O_{\Lambda(\mathds 1_{\mathsf k},\mathfrak m)}$.
\end{proof}

\begin{rem}
Notice that,  since $\Lambda_{\mathds 1_{\mathsf k}}$ has a unique infinite path, every triple $(\mu, a^n, \nu)$ is cycline. So the cycline subalgebra of 
$\O_{\Lambda(\mathds 1_{\mathsf k},\mathfrak m)}$ coincides with $\O_{\Lambda(\mathds 1_{\mathsf k},\mathfrak m)}$, and
does not provide much information of the canonical Cartan subalgebra $\F'$ in general. 
\end{rem}

\subsection{The spectrum of $\F$}
\label{SS:end}

We end this paper by computing the spectrum of $\F$ to connect with Furstenberg’s $\times p, \times q$ conjecture in the viewpoint of \cite{BS24, HW17}. 
Let $1\le p_1, \ldots, p_n\in \bN$ and 
\[
\varphi: \bT\to \bT, \ z\mapsto z^{p_1\cdots p_n}.
\] 
Then the inverse limit 
\[
 \varprojlim(\bT, \varphi):=\left\{(x_n)_{n\in \bN}\in \prod_{n\in \bN} \bT: x_n =\varphi(x_{n+1}) \text{ for all }n \in \bN\right\}
\]
is a solenoid, denoted as $S_{p_1\cdots p_n}$. 

Let $\mathfrak{e}:=\Bx_1\cdots \Bx_{\mathsf k}$, the unique path in $\Lambda_{\mathds 1_{\mathsf k}}$ of degree $\mathds 1_{\mathsf k}$. 
For $n \in \bN$, we have 
\begin{align*}
\F_{n\mathds 1_{\mathsf k}}&= \overline{\text{span}}\{s_{\mathfrak{e}^n} u_{a^m} s_{\mathfrak{e}^n}^* | \ m \in \bZ \},\\
\F &= \overline{\textnormal{span}}\{s_{\mathfrak{e}^n} u_{a^m} s_{\mathfrak{e}^n}^* | \ n \in \bN, m \in \bZ \}=\ol{\bigcup_{n\in \bN}\F_{n\mathds 1_{\mathsf k}}}.
\end{align*}
Note that for any $n \in \bN$, $s_{\mathfrak{e}^n} u_a s_{\mathfrak{e}^n}^*$ is a unitary that generates $\F_{n\mathds 1_{\mathsf k}}$. Then one can see that there is an isomorphism 
$\psi_n:\F_{n\mathds 1_{\mathsf k}} \to \rC(\bT)$.

Let $\mathfrak M:=\prod_{i=1}^{\mathsf k} m_i$. 

\begin{prop}
Consider the subalgebras $\F_{n\mathds 1_{\mathsf k}}$ along with the inclusions $\varphi_n: \F_{n\mathds 1_{\mathsf k}} \hookrightarrow \F_{(n+1)\mathds 1_{\mathsf k}}$ 
given by $\varphi_n(s_{\mathfrak{e}^n} u_{a^p} s_{\mathfrak{e}^n}^*) = s_{\mathfrak{e}^{n+1}} u_{a^{p\mathfrak m}} s_{\mathfrak{e}^{n+1}}^*$. Then 
$\F = \varinjlim\limits_{n\to \infty} \F_{n\mathds 1_{\mathsf k}}$, and the spectrum of $\F$ is homeomorphic to $S_{\mathfrak M}$.
\end{prop}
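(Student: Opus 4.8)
The plan is to realize $\F$ as the inductive limit of the tower $\F_{\mathds 1_{\mathsf k}}\subseteq \F_{2\mathds 1_{\mathsf k}}\subseteq\cdots$ of copies of $\rC(\bT)$, to identify each connecting map (under these identifications) with the single pullback homomorphism induced by $z\mapsto z^{\mathfrak M}$, and then to pass to spectra via Gelfand duality, which turns the inductive limit into the inverse limit that defines the solenoid $S_{\mathfrak M}$.

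\emph{Step 1: each $\F_{n\mathds 1_{\mathsf k}}$ is a copy of $\rC(\bT)$.} By Observation \ref{Obs}(i) the isometry $s_{\mathfrak{e}}$ is a unitary, hence so is $s_{\mathfrak{e}^n}=s_{\mathfrak{e}}^{\,n}$, and therefore $w_n:=s_{\mathfrak{e}^n}u_a s_{\mathfrak{e}^n}^*$ is a unitary with $w_n^m=s_{\mathfrak{e}^n}u_{a^m}s_{\mathfrak{e}^n}^*$. Since $(\bZ,\Lambda(\mathds 1_{\mathsf k},\mathfrak m))$ is pseudo-free (Observation \ref{Obs}(ii)) and $\bZ$ is amenable, Proposition \ref{P:genO}(iii) embeds $\ca(\bZ)\cong\rC(\bT)$ into $\O_{\Lambda(\mathds 1_{\mathsf k},\mathfrak m)}$, so $u_a$ has full spectrum $\bT$; as $w_n$ is unitarily conjugate to $u_a$, it too has full spectrum. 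Hence $\F_{n\mathds 1_{\mathsf k}}=\ca(w_n)\cong\rC(\bT)$, and we let $\psi_n:\F_{n\mathds 1_{\mathsf k}}\to\rC(\bT)$ be the isomorphism sending $w_n$ to the coordinate function $z$.

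\emph{Step 2: the connecting maps.} Relation (iv) of Definition \ref{D:O} together with $a\cdot\mathfrak{e}=\mathfrak{e}$ and $a|_{\mathfrak{e}}=a^{\mathfrak m^{d(\mathfrak{e})}}=a^{\mathfrak M}$ yields $u_a s_{\mathfrak{e}}=s_{\mathfrak{e}}u_{a^{\mathfrak M}}$, equivalently $s_{\mathfrak{e}}u_{a^{\mathfrak M}}s_{\mathfrak{e}}^*=u_a$. Conjugating by $s_{\mathfrak{e}}^{\,n}$ gives
\[
w_{n+1}^{\mathfrak M}=s_{\mathfrak{e}}^{\,n+1}u_{a^{\mathfrak M}}s_{\mathfrak{e}}^{\,-(n+1)}
=s_{\mathfrak{e}}^{\,n}\big(s_{\mathfrak{e}}u_{a^{\mathfrak M}}s_{\mathfrak{e}}^{-1}\big)s_{\mathfrak{e}}^{\,-n}
=s_{\mathfrak{e}}^{\,n}u_a s_{\mathfrak{e}}^{\,-n}=w_n .
\]
Thus $w_n=w_{n+1}^{\mathfrak M}\in\F_{(n+1)\mathds 1_{\mathsf k}}$, so $\F_{n\mathds 1_{\mathsf k}}\subseteq\F_{(n+1)\mathds 1_{\mathsf k}}$, and raising to the $p$-th power shows the inclusion $\varphi_n$ is exactly $s_{\mathfrak{e}^n}u_{a^p}s_{\mathfrak{e}^n}^*\mapsto s_{\mathfrak{e}^{n+1}}u_{a^{p\mathfrak M}}s_{\mathfrak{e}^{n+1}}^*$, as asserted. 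Combined with $\F=\ol{\bigcup_n\F_{n\mathds 1_{\mathsf k}}}$ from Lemma \ref{L:F}, this gives $\F=\varinjlim_n\F_{n\mathds 1_{\mathsf k}}$. Moreover $\psi_{n+1}(w_n)=\psi_{n+1}(w_{n+1}^{\mathfrak M})=z^{\mathfrak M}$ while $\psi_n(w_n)=z$, so under the identifications $\psi_n$ the connecting map becomes the single unital $*$-homomorphism $\widetilde\varphi:\rC(\bT)\to\rC(\bT)$, $f\mapsto f\circ\varphi$, where $\varphi(z)=z^{\mathfrak M}$; i.e., the squares relating $\varphi_n$ and $\widetilde\varphi$ commute.

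\emph{Step 3: passing to spectra.} By Step 2 the system $(\F_{n\mathds 1_{\mathsf k}},\varphi_n)$ is isomorphic to the constant system $(\rC(\bT),\widetilde\varphi)$, so $\F\cong\varinjlim(\rC(\bT),\widetilde\varphi)$. Gelfand duality is a contravariant equivalence carrying inductive limits of unital commutative C*-algebras to inverse limits of spectra and carrying $\widetilde\varphi$ to its dual $\varphi:\bT\to\bT$, $w\mapsto w^{\mathfrak M}$. Therefore
\[
\widehat{\F}\cong\varprojlim\big(\bT,\ \varphi\big)=\varprojlim\big(\bT,\ z\mapsto z^{\mathfrak M}\big)=S_{\mathfrak M},
\]
the desired homeomorphism. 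The main point requiring care is Step 1, namely verifying that $w_n$ (equivalently $u_a$) has full spectrum, so that $\F_{n\mathds 1_{\mathsf k}}$ is genuinely $\rC(\bT)$ rather than a proper quotient; this is exactly where pseudo-freeness and amenability enter through Proposition \ref{P:genO}(iii). Everything else is the bookkeeping of checking that the $\psi_n$ intertwine $\varphi_n$ with the single map $z\mapsto z^{\mathfrak M}$, after which the homeomorphism follows from the functoriality of Gelfand duality.
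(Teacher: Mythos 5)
Your proof is correct and follows essentially the same route as the paper's: identify each $\F_{n\mathds 1_{\mathsf k}}$ with $\rC(\bT)$ via the generating unitary $w_n=s_{\mathfrak{e}^n}u_a s_{\mathfrak{e}^n}^*$, check that under these identifications the connecting maps become $z\mapsto z^{\mathfrak M}$, and dualize to obtain the solenoid (the paper cites Takeda's inductive/projective-limit theorems where you invoke Gelfand duality directly). The one point where you go beyond the paper is in justifying that $u_a$, and hence each $w_n$, has full spectrum via pseudo-freeness and Proposition \ref{P:genO}(iii), so that $\F_{n\mathds 1_{\mathsf k}}$ is genuinely $\rC(\bT)$ and not a proper quotient --- a detail the paper leaves implicit when asserting the isomorphisms $\psi_n$.
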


\begin{proof}
Notice that 
\[
s_{\mathfrak{e}^n} u_{a^p} s_{\mathfrak{e}^n}^* 
= s_{\mathfrak{e}^n} s_{\mathfrak{e}}s_{\mathfrak{e}}^* u_{a^p} s_{\mathfrak{e}^n}^* 
= s_{\mathfrak{e}^{n+1}} u_{a^{p\mathfrak M}} s_{\mathfrak{e}^{n+1}}^*.
\]
This shows that $\varphi_n$ is indeed an inclusion of C*-algebras. Since the union $\bigcup_{n=0}^{\infty} \F_{n\mathds 1_{\mathsf k}}$ is dense in $\F$ (actually they are equal), we have that $\F$ is isomorphic to the direct limit of $(\F_{n\mathds 1_{\mathsf k}},\varphi_n)$ (see \cite{Mur04} Remark 6.1.3).

Due to \cite[Theorem 2]{Tak55}, the spectrum of $\F$ is the projective limit of the spectra of the subalgebras $\F_{n\mathds 1_{\mathsf k}}$, with the maps $\phi_{n+1}:\widehat{\F_{(n+1)\mathds 1_{\mathsf k}}} \rightarrow \widehat{\F_{n\mathds 1_{\mathsf k}}}$ which induce the maps $\varphi_n$. However we will work with an isomorphic direct system in order to make things more concrete. Observe that we have an isomorphism $\psi_n$ between $\F_{n\mathds 1_{\mathsf k}}$ and $\rC(\bT)$ that sends the element $s_{\mathfrak{e}^n} a s_{\mathfrak{e}^n}^* \in \F_n$ to the function $f(z) = z$ in $\rC(\bT)$, which we will just denote by $z$ (note that $z$ is a unitary element that generates $\rC(\bT)$ ). Let $\varphi'_{n}: \rC(\bT) \rightarrow \rC(\bT)$ be the map defined by sending the function $z$ to $z^{\mathfrak M}$. Then by a direct calculation the following diagram commutes:
\[ 
\begin{tikzcd}
\F_{n\mathds 1_{\mathsf k}} \arrow{r}{\varphi_{n}} \arrow[swap]{d}{\psi_n} & \F_{(n+1)\mathds 1_{\mathsf k}} \arrow{d}{\psi_{n+1}} \\%
\rC(\bT) \arrow[r, "\varphi'_{n}"']& \rC(\bT)
\end{tikzcd}
\]
By \cite[Proposition 2]{Tak55}, $\F$ is then isomorphic to the direct limit of the system $(\rC(\bT), \varphi'_{n,n+1})$. We now observe that homeomorphism $\rho:\bT \rightarrow \bT$ defined by $\rho(z) = z^{\mathfrak M}$ induces the maps $\varphi'_n:\rC(\bT)\rightarrow \rC(\bT)$, and so the spectrum $\F$ is homeomorphic to the projective limit of
$$ \bT \xleftarrow{\rho} \bT \xleftarrow{\rho} \bT \xleftarrow{\rho} \bT \xleftarrow{\rho} \ldots $$
which is precisely $S_{\mathfrak M}$. 
\end{proof}

\begin{rem}
One can show that $\O_{\Lambda(\mathds 1_{\mathsf k},\mathfrak m)}\cong \F\ltimes \bZ^{\mathsf k}\cong \rC(S_{\mathfrak M}) \ltimes \bZ^{\mathsf k}$. In fact, the action of $\bZ^{\mathsf k}$ on $\F$ is given by 
\[
\alpha: \bZ^{\mathsf k}\to \Aut\F, \
\alpha_{\mathbf n}(A)= s_\nu A s_\nu^*,
\]
where $\nu$ is the unique path in $\Lambda_{\mathds 1_{\mathsf k}}$ of degree $\mathbf n$. 
\end{rem}

\smallskip
\subsection*{Acknowledgements}
Some results in this paper were presented at COSy 2023 and the workshop ``Groups and Group Actions" in Thematic Program on Operator Algebras and Applications 
in 2023. The second author is very  grateful to the organizers for the invitations and providing great opportunities to present our results. 
Also, thanks go to Boyu Li for some discussion at the early stage of this paper, 
and the anonymous referee for careful reading.


\end{document}